\newcommand{\bk}{\Bbbk}
\newcommand{\Z}{\mathbb{Z}}
\newcommand{\R}{\mathbb{R}}
\newcommand{\Q}{\mathbb{Q}}
\newcommand{\F}{\mathbb{F}}
\newcommand{\scO}{\mathscr{O}}
\DeclareMathOperator{\Hom}{Hom}
\newcommand{\id}{\mathrm{id}}
\newcommand{\simto}{\xrightarrow{\sim}}
\newcommand{\fR}{\mathfrak{R}}
\newcommand{\bG}{\mathbf{G}}
\newcommand{\bH}{\mathbf{H}}
\newcommand{\bB}{\mathbf{B}}
\newcommand{\bT}{\mathbf{T}}
\newcommand{\bC}{\mathbf{C}}
\newcommand{\bU}{\mathbf{U}}
\newcommand{\bN}{\mathbf{N}}
\newcommand{\bW}{\mathbf{W}}
\newcommand{\bP}{\mathbf{P}}
\newcommand{\bM}{\mathbf{M}}
\newcommand{\bZ}{\mathbf{Z}}
\def\lotimes{\@ifnextchar_{\@lotimessub}{\@lotimesnosub}}
\def\@lotimessub_#1{\mathchoice{\mathbin{\mathop{\otimes}^L}_{#1}}%
  {\otimes^L_{#1}}{\otimes^L_{#1}}{\otimes^L_{#1}}}
\def\@lotimesnosub{\mathbin{\mathop{\otimes}^L}}
\def\lboxtimes{\@ifnextchar_{\@lboxtimessub}{\@lboxtimesnosub}}
\def\@lboxtimessub_#1{\mathchoice{\mathbin{\mathop{\boxtimes}^L}_{#1}}%
  {\boxtimes^L_{#1}}{\boxtimes^L_{#1}}{\boxtimes^L_{#1}}}
\def\@lboxtimesnosub{\mathbin{\mathop{\boxtimes}^L}}
\newcommand{\Spec}{\mathrm{Spec}}
\newcommand{\Diag}{\mathrm{D}}
\mathchardef\mhyphen="2D
\newcommand{\Lie}{\mathscr{L}\hspace{-1.5pt}\mathit{ie}}
\newcommand{\Roots}{\mathfrak{R}}
\newcommand{\sconn}{\mathrm{sc}}
\numberwithin{equation}{section}
\newtheorem{thm}{Theorem}[section]
\newtheorem{lem}[thm]{Lemma}
\newtheorem{prop}[thm]{Proposition}
\newtheorem{cor}[thm]{Corollary}
\theoremstyle{definition}
\theoremstyle{remark}
\newtheorem{rmk}[thm]{Remark}
\newtheorem{ex}[thm]{Example}
\title[Pinned fixed points of reductive groups]{Fixed points under pinning-preserving automorphisms of reductive group schemes}
\author{Pramod N. Achar}
\address{Department of Mathematics\\
  Louisiana State University\\
  Baton Rouge, LA 70803\\
  U.S.A}
\email{pramod@math.lsu.edu}
\author{Jo{\~a}o Louren{\c c}o}
	\address{Mathematisches Institut, Universität Münster, Einsteinstrasse 62, Münster, Germany}
\email{j.lourenco@uni-muenster.de}
\author{Timo Richarz}
\address{Technische Universit\"at Darmstadt, Department of Mathematics, 64289 Darmstadt, Germany}
\email{richarz@mathematik.tu-darmstadt.de}
\author{Simon Riche}
\address{Universit\'e Clermont Auvergne, CNRS, LMBP, F-63000 Clermont-Ferrand, France}
\email{simon.riche@uca.fr}
\thanks{P.A. was supported by NSF Grant Nos.~DMS-1802241 and DMS-2202012. J.L. was supported by the Max-Planck-Institut für Mathematik, the Excellence Cluster of the Universität Münster, and by the ERC Consolidator Grant 770936 via Eva Viehmann. This project has received
funding from the European Research Council (ERC) under the European Union's Horizon 2020
research and innovation programme (S.R. \& T.R., grant agreement No 101002592),
funding (T.R.) from the Deutsche Forschungsgemeinschaft (DFG, German Research Foundation) TRR 326 \textit{Geometry and Arithmetic of Uniformized Structures}, project number 444845124 and funding (T.R.) from the LOEWE professorship in algebra. 
}
\begin{document}

\begin{abstract}
In this paper we determine the scheme-theoretic fixed points of pinned reductive group schemes acted upon by a group of pinning-preserving automorphisms.
The results are used in a companion paper to establish a ramified geometric Satake equivalence with integral or modular coefficients. 
\end{abstract}

\maketitle

\section{Introduction}

\subsection{}

In this paper we study some basic properties (e.g.~flatness and smoothness) of the fixed point group scheme for an action of an abstract group on a split reductive group scheme by pinning preserving automorphisms. 
Our motivation for such a study comes from work on a ramified version of the geometric Satake equivalence; in fact, in the companion paper~\cite{alrr} we show that such fixed points group schemes (over the spectrum of the $\ell$-adic integers $\Z_\ell$ or the finite field $\F_\ell$ for some prime number $\ell$) arise as Tannakian groups for appropriate categories of equivariant perverse sheaves on affine Grassmannians attached to  parahoric groups associated with special facets of Bruhat--Tits buildings. 
(An analogous study over $\Q_\ell$ was undertaken by Zhu~\cite{zhu} and by the third named author~\cite{richarz}.)
We believe that our study is of independent interest. 
It is treated here in greater generality and in more detail than what is actually needed in~\cite{alrr}.

\subsection{Statement}

Let $S$ be a nonempty scheme, and consider a pinned reductive group $(\bG,\bT,M,\Roots,\Delta,X)$ over $S$. In particular, $\bT=\Diag_S(M)$ is a (split) maximal torus in $\bG$, $\Roots$ is the associated root system, $\Delta$ is a basis of $\Roots$, and $X$ is a collection of nonvanishing sections of the root subspaces in $\Lie(\bG)$ attached to simple roots. (In case $S$ is the spectrum of a field or a principal ideal domain, one can take for $\bG$ any reductive group scheme admitting a split maximal torus $\bT$; then $M$ is the lattice of characters of $\bT$, and a pinning $X$ always exists because the weight spaces in $\Lie(\bG)$ are free over $\scO(S)$.) We then have a corresponding group $\mathrm{Aut}(\bG,\bT,M,\Roots,\Delta,X)$ of automorphisms of $\bG$ preserving these data (see~\S\ref{ss:pinned-red-groups-aut} for precise references), which identifies with the group of automorphisms of the associated root datum. We consider an abstract group $A$ and an action of $A$ on $\bG$ defined by a homomorphism $A \to \mathrm{Aut}(\bG,\bT,M,\Roots,\Delta,X)$.

Our main result is the following (see Theorem~\ref{thm:fixed-pts} and Proposition~\ref{prop:fixed-pts-fields-2}).  In this statement, $M_A$ denotes the group of coinvariants for the action of $A$ on $M$ (see~\S\ref{ss:ex-diag-gps}).

\begin{thm}
\phantomsection
\label{thm:main-intro}
 \begin{enumerate}
  \item 
  \label{it:main-intro-1}
  The group scheme $\bG^A$ is flat over $S$.
  \item 
  \label{it:main-intro-2}
  The group scheme $\bG^A$has geometrically connected fibers over $S$ iff either $M_A$ is torsion free or 
  $S$ has exactly one residual characteristic $\ell>0$ and the torsion part of $M_A$ is an $\ell$-group.
  \item 
  \label{it:main-intro-3}
  The group scheme $\bG^A$ is smooth over $S$ iff the following conditions hold:
  \begin{enumerate}
  \item the order of the torsion subgroup of $M_A$ is prime to all residual characteristics of $S$;
  \item 
  \label{it:smoothness-issue-intro}
  if $\Roots$ has an indecomposable component of type $\mathsf{A}_{2n}$ for some $n \geq 1$ whose stabilizer in $A$ acts nontrivially on this component, then $2$ is not a residual characteristic of $S$.
  \end{enumerate}
  \item
  \label{it:main-intro-4}
  If $S=\Spec(\bk)$ for some field $\bk$, then the reduced neutral component $(\bG^A)^\circ_{\mathrm{red}}$ is a split reductive group scheme, and if $\overline{\bk}$ is an algebraic closure of $\bk$ one has
  \[
  ((\overline{\bk} \otimes_\bk \bG)^A)^\circ_{\mathrm{red}}=\overline{\bk} \otimes_{\bk} (\bG^A)^\circ_{\mathrm{red}}.
  \]
  \end{enumerate}
\end{thm}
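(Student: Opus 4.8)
The plan is to reduce the whole statement to an explicit analysis of the open Bruhat cell and its fixed locus. Since $(\bG,\bT,M,\Roots,\Delta,X)$ is pinned, it is the base change to $S$ of the split Chevalley group scheme over $\Spec(\Z)$ carrying the same root datum and $A$-action; thus $\bG^A$ is the base change of the analogous fixed-point scheme over $\Z$, and (1) together with the fibrewise parts of (2)--(3) reduce to $S=\Spec(\Z)$ (keeping track of residual characteristics). The open cell $\Omega=\bU^-\cdot\bT\cdot\bU^+$ is $A$-stable, since $A$ preserves $\bT$ and $\bB^\pm$, so $\Omega^A$ is open in $\bG^A$; taking $A$-fixed points of the $A$-equivariant isomorphism $\bU^-\times\bT\times\bU^+\simto\Omega$ yields $(\bU^-)^A\times\bT^A\times(\bU^+)^A\simto\Omega^A$. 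After an fppf base change over which $\bG^A$ acquires enough sections, the translates $g\cdot\Omega^A$ cover $\bG^A$; as flatness and smoothness are fppf-local on the base and invariant under translation, items (1)--(3) reduce to understanding the three factors, together with the fact that $\bG^A$ is generated by $\bT^A$, the (connected) fixed root subgroups, and representatives of the folded Weyl group (which lie in the neutral component of the reductive group appearing in (4)).

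For the torus factor, write $M_A\cong\Z^{\oplus r}\oplus T$ with $T$ finite; then $\bT^A=\Diag_S(M_A)\cong\Gm^r\times_S\Diag_S(T)$, whose coordinate ring is free over the base, so it is flat, and a fibre of residual characteristic $p\geq 0$ is geometrically connected iff $T$ is a $p$-group and smooth iff $|T|$ is invertible in the residue field. For $(\bU^+)^A$ the combinatorics is the only genuine difficulty: $A$ permutes $\Roots^+$, and I call a pair of $A$-orbits $(\scO,A\cdot(\alpha+\beta))$ \emph{exceptional} when $\scO$ contains roots $\alpha,\beta$ with $\alpha+\beta\in\Roots$ --- this happens precisely for the ``middle'' configurations in an indecomposable component of type $\mathsf{A}_{2n}$ whose stabiliser in $A$ acts nontrivially. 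I would show that $(\bU^+)^A$ has a filtration whose subquotients are a copy of $\Ga$ for every non-exceptional orbit and, for every exceptional pair, the closed subscheme $\Spec\Z[a,c]/(2c\mp a^2)$ of $\mathbb{A}^2$ (the sign coming from a structure constant; e.g.\ in $\mathrm{SL}_3$ the outer involution cuts out $\{2c=a^2\}$ in the Heisenberg group). The latter scheme is an integral domain, hence $\Z$-flat, and has geometrically connected fibres; it is $\cong\Ga$ away from $2$, whereas over $\F_2$ it is $\Spec\F_2[a,c]/(a^2)$ --- connected but non-reduced, hence non-smooth. Therefore $(\bU^\pm)^A$ is always flat with geometrically connected fibres and is smooth iff there is no exceptional pair or $2$ is invertible. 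Combining the three factors over the cell and descending yields (1), (2) --- connectedness of $\bG^A_{\bar s}$ being equivalent to that of $\bT^A_{\bar s}$, since the other generators lie in the neutral component --- and (3).

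For (4), take $S=\Spec(\bk)$. On $(\Omega^A)_{\mathrm{red}}$ one reads off that $(\bG^A)^\circ_{\mathrm{red}}$ has the structure of a split group: maximal torus $(\bT^A)^\circ_{\mathrm{red}}\cong\Gm^r$, unipotent radical of a Borel $(\bU^+)^A_{\mathrm{red}}$ (a successive extension of copies of $\Ga$ --- in an exceptional piece the surviving coordinate is $a$ if $\mathrm{char}(\bk)\neq2$ and $c$ if $\mathrm{char}(\bk)=2$), and a reduced root datum determined combinatorially by $(\Roots,\Delta)$, the $A$-action and $\mathrm{char}(\bk)$: the usual folded datum away from $2$, suitably modified at $2$ on $\mathsf{A}_{2n}$-components. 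Since a split reductive group is determined by its root datum and hence already defined over the prime field $\bk_0$ of $\bk$, one obtains a split reductive group $\bH$ over $\bk_0$ and a homomorphism $\bH_\bk\to\bG^A$ which, being a closed immersion onto a reduced connected closed subgroup of full dimension (checkable over $\bar\bk$), identifies $\bH_\bk$ with $(\bG^A)^\circ_{\mathrm{red}}$. Consequently $(\bG^A)^\circ_{\mathrm{red}}\cong\bk\otimes_{\bk_0}\bH_{\bk_0}$ is geometrically reduced, and the formation of $(\cdot)^\circ_{\mathrm{red}}$ commutes with the extension $\bk\to\bar\bk$.

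The main obstacle is the exceptional-pair analysis: proving $(\bU^+)^A$ is flat over $\Z$ --- which \emph{fails} for the naive prescription of taking fixed points one root subgroup at a time, since already $(\bU_{\alpha+\beta})^A$ would be $\Spec\Z[c]/(2c)$, which has $\Z$-torsion --- and identifying the non-reduced special fibre at $2$ together with its reduction. A secondary technical point is handling imperfect $\bk$ in (4): over such a field the reduced subscheme of a group scheme need not a priori be a subgroup, which is why one first produces the split group $\bH$ rather than reasoning directly with $(\bG^A)_{\mathrm{red}}$.
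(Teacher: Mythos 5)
Your analysis of the big cell is essentially the paper's: the torus factor via coinvariants $M_A$, and the insight that the fixed points of the unipotent part must be computed on a whole ``exceptional'' packet $\{\alpha,a\cdot\alpha,\alpha+a\cdot\alpha\}$ at once (giving $\Spec \Z[x,y]/(x^2-2y)$, flat, nonsmooth exactly at $2$) rather than root subgroup by root subgroup, is exactly the key local point, which the paper implements via a Chevalley--Steinberg system and the maps $\imath_E$ from $(\mathrm{SL}_{3,S})^{\Z/2\Z}$. The genuine gap is in how you pass from $\bC^A$ to $\bG^A$. The assertion that ``after an fppf base change the translates $g\cdot\bC^A$ cover $\bG^A$,'' and likewise that $\bG^A$ is generated by $\bT^A$, the fixed root subgroups and Weyl representatives lying in the neutral component, is precisely what has to be proved, and it is not routine: it amounts to a Bruhat decomposition of $\bG^A$ on geometric fibers, which requires (a) surjectivity of $\bN^A(\bk)\to\bW^A$ (the paper gets this from H\'ee's theorem that $\bW^A$ is a Coxeter group generated by reflections attached to $A$-orbits in $\Delta$, together with explicit representatives coming from $\imath_E$), and (b) the identity $(\bU w\bB)^A=\bU^A\dot w\bB^A$. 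Your part (2) claim that connectedness of the fibers is controlled by $\bT^A$ rests on the same unproved structure (density of $\bC^A$ in each fiber). Note also a circularity hazard in the phrase ``fppf base change over which $\bG^A$ acquires enough sections'': the natural such cover is $\bG^A$ itself, which is not yet known to be flat. The paper sidesteps literal covering by translates: it proves flatness by applying the fiberwise criterion to the multiplication map $\bC^A\times_S\bC^A\to\bG^A$, whose surjectivity on geometric fibers still needs the density of $\bC^A$, i.e.\ the fixed-point Bruhat decomposition; so this ingredient cannot be avoided and is missing from your plan.

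The second gap is in part (4). You propose to build a split reductive group $\bH$ over the prime field from the folded (characteristic-dependent) root datum and a homomorphism $\bH_\bk\to\bG^A$ that is a closed immersion of full dimension. But constructing such a homomorphism is exactly where the content lies: it requires verifying the Chevalley/Steinberg relations among the torus $(\bT^A)^\circ_{\mathrm{red}}$, the candidate root homomorphisms, and Weyl representatives inside $\bG^A$ (commutation relations between distinct fixed root subgroups included), and nothing in the proposal indicates how to do this; ``reading off'' a root datum from the open cell does not by itself exclude a unipotent radical or produce the map. The paper instead proves reductivity of $(\bG^A)^\circ_{\mathrm{red}}$ over $\overline\bk$ directly, by showing the quotient $q$ by the unipotent radical is injective on each $((\bU_E)^A)_{\mathrm{red}}$ up to infinitesimals (using the nontrivial cocharacters $\gamma_E^\vee$ and the $\mathrm{SL}_2$-maps), establishing linear independence of the restricted roots via the coinvariant lattice, and comparing dimensions with the big cell; it then handles imperfect $\bk$ not by descending a model of $\bH$ but by proving $(\bG^A)_{\mathrm{red}}$ is geometrically reduced, using faithful flatness of $\bC^A\times\bC^A\to\bG^A$ and the fact that $(\bC^A)_{\mathrm{red}}$ is a product of a diagonalizable group and an affine space. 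Your concern about imperfect fields is well placed, but the route you offer to address it presupposes the unconstructed isomorphism with $\bH_\bk$, so as written part (4) is not proved.
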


Let us first discuss some previous occurrences of such results in the literature.
Statement~\eqref{it:main-intro-4} was already known. The case when $\bk$ is algebraically closed and $A$ is finite and cyclic (and satisfies a condition weaker than preserving a pinning) was treated by Steinberg~\cite{steinberg}; see also~\cite[Th\'eor\`eme~1.8]{digne-michel} for a further study. A more general version (not requiring $\bG$ to be split, and also replacing the existence of a fixed pinning by a weaker condition) is due to Adler--Lansky, see~\cite[Proposition~3.5]{adler-lansky}. In case $\bk$ is algebraically closed and $A$ is finite, the same statement as ours appears in work of Haines~\cite{haines}. All of these proofs are based on~\cite{steinberg}. We give a new proof of this statement here, which does not rely on~\cite{steinberg} except for elementary claims on root systems.

The smoothness of $\bG^A$ (in case $S=\Spec(\Z[\frac{1}{2}])$, $\bG$ is semisimple and simply-connected and $A$ is cyclic) is also proved in~\cite[Lemma~4.25]{dhkm}; the nonsmoothness in the setting of condition~\eqref{it:smoothness-issue-intro} is mentioned in~\cite[Remark~4.26]{dhkm}. In case $S=\Spec(\bk)$ for a field $\bk$, and $A$ is finite of cardinality invertible in $\bk$ (but does not necessarily fix a pinning), the fact that $(\bG^A)^\circ$ is reductive (in particular, smooth) is also proved in~\cite[Theorem~2.1]{prasad-yu-finite-gp}.

Let us point out also that if $S$ is the spectrum of a mixed characteristic discrete valuation ring and the coinvariants $M_A$ of the action of $A$ on $M$ are torsion-free, then $\bG^{A}$ is a quasi-reductive $S$-group scheme in the sense of Prasad--Yu~\cite{py} (see Theorem~\ref{thm:fixed-pts}\eqref{it:fixed-pts-5}).
A particularly interesting example is when $S=\Spec(\Z_2)$ and $\bG=\mathrm{SL}_{2n+1,\Z_2}$ for some $n \geq 1$, with the unique nontrivial action of $\Z/2\Z$. 
In this case, $\bG^A\to S$ is nonsmooth by~\eqref{it:main-intro-3}, and hence in particular nonreductive.

\subsection{Outline of the proof}

The main step in the proof of Theorem~\ref{thm:main-intro} consists of an analysis of the fixed points of $A$ on the big cell in $\bG$ attached to our given pinning. For this we study separately the fixed points on $\bT$ (which is rather straightforward) and on the (positive and negative) ``maximal unipotent subgroups'' $\bU$ and $\bU^-$. This part is more subtle, and requires the construction of an appropriate ``extension'' of $X$ to a Chevalley system compatible (in the appropriate sense) with the action of $A$. We also analyze the case when $S$ is the spectrum of an algebraically closed field in great detail in~\S\ref{ss:fixed-pts-alg-closed-fields}.

Let us note that the groups considered in Theorem~\ref{thm:main-intro} share many standard properties of reductive group schemes, although they are not reductive in general. In particular: 
\begin{itemize}
\item
to such a group we attach a root system (and even several root data, see~\S\ref{ss:root-data});
\item
in~\S\ref{ss:SL2-maps} we construct certain ``twisted $\mathrm{SL}_2$-maps'' associated with positive roots in this root system (whose domain is not necessarily $\mathrm{SL}_{2,S}$);
\item
in~\S\ref{ss:Weyl-gp} we show that the quotient of the normalizer of $\bT^A$ by its centralizer is (the constant group scheme attached to) a Coxeter group, which identifies with the Weyl group of the associated root system;
\item
in~\S\ref{ss:parabolic-Levi} we study analogues of parabolic and Levi subgroups in $\bG^A$.
\end{itemize}
Here again, in the special case when $S$ is the spectrum of a field, some of these constructions appear in work of Adler--Lansky and Haines, sometimes under weaker assumptions; see in particular~\cite{adler-lansky-2,haines2} for discussions of root data attached to fixed points.

\subsection{Contents}

In Section~\ref{sec:fixed-pts} we recall the definition of fixed-point schemes, and study some first examples. In Section~\ref{sec:pinned-red-gp-sch} we recall the definition of pinned reductive group schemes, and some basic results on their structure. In Section~\ref{sec:equiv-roots} we construct our twisted $\mathrm{SL}_2$-maps. In Section~\ref{sec:flatness-smoothness} we prove Theorem~\ref{thm:main-intro}. Finally, in Section~\ref{sec:complements} we prove some complements, some of which will be used in~\cite{alrr}.

\subsection{Acknowledgements}

We thank J.~Adler, B.~Conrad, S.~Cotner, J.~F.~Dat and P.~Gille for interesting discussions on the subject of this paper.

\section{Fixed points}
\label{sec:fixed-pts}

\subsection{Definition}
\label{ss:def-fixed-pts}

Let $S$ be a scheme, and let $A$ be an abstract group. Recall that if $X \to S$ is an $S$-scheme, endowed with an action of $A$ by $S$-scheme automorphisms, then the functor of $A$-fixed points $X^A$ is defined as the functor that sends an $S$-scheme $Y$ to the set $\Hom_S(Y,X)^A$, where $A$ acts on the set of $S$-scheme morphisms $\Hom_S(Y,X)$ via its action on $X$, and the superscript means fixed points in the usual sense. (In other words, $X^A$ is the fixed-points sheaf associated with the natural action of $A$ on the fpqc sheaf on the category of $S$-schemes represented by $X$.) If this functor is representable by a scheme, then this scheme will also be denoted $X^A$. It is clear from the definition that this construction is stable under base change; namely, if $S' \to S$ is a morphism of schemes then we have an identification of functors
\begin{equation}
\label{eqn:fixed-pts-bc}
 S' \times_S X^A \simto (S' \times_S X)^A
\end{equation}
where on the right-hand side we consider the natural $A$-action on $S' \times_S X$ induced by the action on $X$. In particular, if $X^A$ is representable by a scheme, then so is $(S' \times_S X)^A$. It is also clear that given an open covering $S = \bigcup_{i \in I} S_i$, the functor $X^A$ is representable by a scheme if and only if for any $i$ the functor $(S_i \times_S X)^A$ is representable by a scheme. (In this case we have an open covering $X^A = \bigcup_{i \in I} (S_i \times_S X)^A$.) This construction is functorial in the sense that if $f : X \to Y$ is an $A$-equivariant morphism of schemes we have a canonical morphism of functors $f^A : X^A \to Y^A$.

The following properties are easily verified from the definitions.

\begin{lem}
\phantomsection
\label{lem:fixed-pts-properties}
\begin{enumerate}
\item
\label{it:fixed-pts-immersion}
Let $X$ and $Y$ be $S$-schemes endowed with actions of $A$, let $f : X \to Y$ be an $A$-equivariant monomorphism over $S$, and let $f^A : X^A \to Y^A$ be the induced morphism. Then the following diagram is cartesian:
\[
\xymatrix{
X^A \ar[r]^-{f^A} \ar[d] & Y^A \ar[d] \\ X \ar[r]^-{f} & Y.
}
\]
In particular, if $Y^A$ is representable by a scheme then so is $X^A$, and if $f$ is an open immersion, resp.~a closed immersion, resp.~an immersion, then so is $f^A$.
\item
\label{it:fixed-pts-product}
Let $X$ and $Y$ be $S$-schemes endowed with actions of $A$, and consider the diagonal action on $X \times_S Y$. Then we have a canonical identification
\[
(X \times_S Y)^A = X^A \times_S Y^A.
\]
\end{enumerate}
\end{lem}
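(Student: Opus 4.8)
The plan is to argue everything on functors of points, using Yoneda to pass between functors and schemes, and to reduce the geometric assertions in~\eqref{it:fixed-pts-immersion} to the stability of open immersions, closed immersions and immersions under base change. For~\eqref{it:fixed-pts-immersion} I would first show that the square is cartesian as a diagram of functors (equivalently, of fpqc sheaves) on $S$-schemes, and only then read off the consequences. So fix an $S$-scheme $Z$: a $Z$-valued point of the fiber product $X \times_Y Y^A$ is a pair $(\phi,\psi)$ with $\phi \in \Hom_S(Z,X)$, $\psi \in \Hom_S(Z,Y)^A$ and $f\circ\phi = \psi$. Since $f$ is a monomorphism, the map $f_\ast := f\circ(-)$ on $\Hom_S(Z,-)$ is injective, so such a pair is determined by $\phi$, and $(\phi,\psi)\mapsto\phi$ identifies the set of $Z$-points of $X\times_Y Y^A$ with the set of $\phi\in\Hom_S(Z,X)$ such that $f_\ast(\phi)$ is $A$-fixed. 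Equivariance of $f$ says exactly that $f_\ast$ is $A$-equivariant, and together with its injectivity this gives that $f_\ast(\phi)$ is $A$-fixed if and only if $\phi$ is $A$-fixed. Hence the $Z$-points of $X\times_Y Y^A$ are canonically $\Hom_S(Z,X)^A = \Hom_S(Z,X^A)$, naturally in $Z$, i.e.\ the square is cartesian. Then if $Y^A$ is representable, $X^A\simeq X\times_Y Y^A$ is a fiber product of schemes over a scheme, hence representable; and in that case the cartesian square exhibits $f^A\colon X^A\to Y^A$ as the base change of $f$ along $Y^A\to Y$, so $f^A$ inherits any property of $f$ that is stable under base change, in particular that of being an open immersion, a closed immersion, or an immersion.

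For~\eqref{it:fixed-pts-product} I would evaluate on an arbitrary $S$-scheme $Z$ in the same spirit: since $A$ acts diagonally on $X\times_S Y$, the universal property of the product gives $\Hom_S(Z,X\times_S Y)^A = \bigl(\Hom_S(Z,X)\times\Hom_S(Z,Y)\bigr)^A = \Hom_S(Z,X)^A\times\Hom_S(Z,Y)^A$, and the right-hand side is by definition $\Hom_S(Z,X^A)\times\Hom_S(Z,Y^A) = \Hom_S(Z,X^A\times_S Y^A)$. This is natural in $Z$, which yields the claimed identification of functors (and in particular the representability of $(X\times_S Y)^A$ whenever $X^A$ and $Y^A$ are representable).

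I do not expect a real obstacle: the only points requiring a moment's care are the equivalence ``$f_\ast(\phi)$ is $A$-fixed $\iff$ $\phi$ is $A$-fixed'' in~\eqref{it:fixed-pts-immersion}, where the monomorphism hypothesis enters, and the bookkeeping that the cartesian square realizes $f^A$ as a base change of $f$, which is what transfers the immersion-type properties from $f$ to $f^A$.
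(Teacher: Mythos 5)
Your proof is correct, and it follows the same (functor-of-points) route the paper intends: the paper simply asserts that these properties are ``easily verified from the definitions,'' and your verification -- the monomorphism hypothesis forcing $f\circ\phi$ to be $A$-fixed exactly when $\phi$ is, the resulting cartesian square realizing $f^A$ as a base change of $f$, and the evaluation on $Z$-points for the product -- is precisely that verification, carried out in full.
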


For general results on fixed-point schemes the reader is referred to \cite{Fogarty_FixedPointSchemes}.
Below we will only consider the case when the morphism $X \to S$ is affine. 
Recall that under this assumption $X^A$ is always representable by a scheme, and the natural morphism $X^A \to X$ is a closed immersion. 
Indeed, passing to an open covering we can assume that $S$ is affine, say $S = \Spec(k)$ for some ring $k$. 
Then $X$ is also affine, say $X = \Spec(R)$, where $R$ is a $k$-algebra.  
The action of $A$ on $X$ corresponds to an action on $R$ by $k$-algebra automorphisms.  
It is easily checked that $X^A$ is represented by the closed subscheme $\Spec(R/I) \subset X$, where $I \subset R$ is the ideal generated by elements of the form $r - a \cdot r$ for $r \in R$ and $a \in A$.

In case $X$ is an affine group scheme over $S$ and $A$ acts by group scheme automorphisms, then of course $X^A$ is a (closed) subgroup scheme of $X$.

\subsection{The case of diagonalizable groups}
\label{ss:ex-diag-gps}

Let us now study the construction of~\S\ref{ss:def-fixed-pts} for certain actions on diagonalizable group schemes.
Recall that for any scheme $S$ and any abelian group $M$, we have an associated group scheme
\[
\Diag_S(M)
\]
representing the group valued functor $\underline{\textrm{Hom}}_{S\textrm{-GrpSch}}(\underline{M}_S,\bG_{\mathrm{m},S})$ on the category of $S$-schemes, see e.g.~\cite[Exp.~VIII, D\'efinition~1.1]{sga3.2}. 
Consider a group $A$ and an action of $A$ on $M$ by group automorphisms. 
We deduce an action of $A$ on the group scheme $\Diag_S(M)$ over $S$, by group scheme automorphisms. 
(By~\cite[Exp.~VIII, Corollaire~1.6]{sga3.2}, any action of $A$ on $\Diag_S(M)$ by group scheme automorphisms arises in this way in case $M$ is of finite type and $S$ is connected.) 
We will denote by $M_A$ the group of coinvariants for this action, i.e.~the quotient of $M$ by the subgroup generated by the elements of the form $m - a \cdot m$ for $a \in A$ and $m \in M$.

\begin{lem}
\label{lem:torus_fix_pts_diag}
There exists a canonical isomorphism of group schemes over $S$
\[
\Diag_S(M_A) \simto \bigl( \Diag_S(M) \bigr)^A.
\]
\end{lem}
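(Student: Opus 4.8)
The plan is to work affine-locally on $S$ and then reduce everything to a statement about Hopf algebras. Since the formation of both sides commutes with base change (for the left-hand side by the very construction of $\Diag_S(-)$, and for the right-hand side by~\eqref{eqn:fixed-pts-bc}), I may assume $S = \Spec(k)$ for a ring $k$. Then $\Diag_S(M) = \Spec(k[M])$, where $k[M]$ is the group algebra with its standard Hopf algebra structure, and the $A$-action on $\Diag_S(M)$ corresponds to the $A$-action on $k[M]$ induced functorially by the action of $A$ on $M$ (permuting the canonical basis $\{e^m : m \in M\}$). Meanwhile $\Diag_S(M_A) = \Spec(k[M_A])$, and the quotient map $M \twoheadrightarrow M_A$ induces a surjective Hopf algebra homomorphism $k[M] \twoheadrightarrow k[M_A]$, hence a closed immersion $\Diag_S(M_A) \hookrightarrow \Diag_S(M)$.

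Next I would identify the ideal cutting out this closed subscheme with the ideal $I$ cutting out the fixed points, as described in~\S\ref{ss:def-fixed-pts}. On the one hand, the kernel of $k[M] \to k[M_A]$ is spanned by the elements $e^m - e^{m'}$ for $m \equiv m' \pmod{\ker(M \to M_A)}$; since $\ker(M \to M_A)$ is generated by the elements $m - a\cdot m$, this kernel is in fact generated as an ideal by the elements $e^m - e^{m + (n - a\cdot n)} = e^m(1 - e^{n - a\cdot n})$ for $m, n \in M$, $a \in A$, and because $e^m$ is a unit it is generated by the elements $e^n - e^{a \cdot n} = e^n - a\cdot e^n$ for $n \in M$, $a \in A$. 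On the other hand, the ideal $I$ defining $\Diag_S(M)^A$ is by definition generated by all $r - a\cdot r$ for $r \in k[M]$ and $a \in A$. Clearly $I$ contains all $e^n - a\cdot e^n$. For the reverse inclusion, write an arbitrary $r = \sum_m c_m e^m$; then $r - a\cdot r = \sum_m c_m(e^m - e^{a\cdot m})$ visibly lies in the ideal generated by the $e^n - a\cdot e^n$. Hence $I$ coincides with $\ker(k[M] \to k[M_A])$, which gives the desired isomorphism $\Diag_S(M_A) \simto \Diag_S(M)^A$ over the affine base, and one checks this identification is compatible with the group scheme structures (both are induced by $M \to M_A$) and glues over an affine cover of a general $S$.

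I do not expect a serious obstacle here; the argument is essentially a bookkeeping exercise once the reduction to group algebras is in place. The one point requiring a little care is the passage from "$I$ is generated by $r - a\cdot r$ for \emph{all} $r$" to "$I$ is generated by $e^n - a\cdot e^n$ for the \emph{basis} elements", i.e.\ checking that the former ideal is not larger; this is immediate from linearity of $a$ as displayed above. A second mild point is verifying that the resulting isomorphism is canonical and independent of the chosen affine cover, so that it globalizes — but this follows formally from the naturality of all the constructions involved (the base-change isomorphism~\eqref{eqn:fixed-pts-bc} and the functoriality of $\Diag_S(-)$).
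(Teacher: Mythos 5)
Your argument is correct, and it reaches the conclusion by a genuinely different route than the paper. Both proofs begin the same way: the quotient $M \twoheadrightarrow M_A$ gives a morphism $\Diag_S(M_A) \to \Diag_S(M)$ landing in the fixed points, and one reduces to $S = \Spec(k)$. At that point the paper avoids any ideal computation: it tests both sides on an arbitrary $k$-algebra $R'$ and uses the chain of identifications $\Hom_{k\mhyphen\mathrm{alg}}(R,R') = \Hom_{k\mhyphen\mathrm{alg}}(k[M],R')^A = \Hom_{\mathrm{gps}}(M,(R')^\times)^A = \Hom_{\mathrm{gps}}(M_A,(R')^\times) = \Hom_{k\mhyphen\mathrm{alg}}(k[M_A],R')$, i.e.\ the adjunction between group algebras and unit groups together with the universal property of coinvariants, and concludes $R = k[M_A]$ by Yoneda. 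You instead identify the two ideals inside $k[M]$ directly: the fixed-point ideal $I = (r - a\cdot r)$ and the kernel of $k[M] \to k[M_A]$. Your approach buys an explicit description of the defining ideal (it is generated by the basis differences $e^n - e^{a\cdot n}$), which is occasionally useful; the paper's functorial argument is shorter and makes the canonicity, the compatibility with the group structures, and the globalization over a general $S$ automatic, since the identification is one of functors on $k$-algebras.

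One small step in your write-up deserves to be made explicit: you assert that, because $\ker(M \to M_A)$ is generated by the elements $n - a\cdot n$, the kernel of $k[M] \to k[M_A]$ (which as a $k$-module is spanned by all differences $e^m - e^{m'}$ with $m \equiv m'$) is generated \emph{as an ideal} by the elements $e^m(1 - e^{\,n - a\cdot n})$. This is true but needs the telescoping identities $1 - e^{k+k'} = (1 - e^{k}) + e^{k}(1 - e^{k'})$ and $1 - e^{-k} = -e^{-k}(1 - e^{k})$, applied to a decomposition of $m' - m$ as a signed sum of generators of $\ker(M \to M_A)$. This is routine and not a gap, but as written it is the only place where your proof relies on an unjustified assertion, so include the one-line verification.
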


\begin{proof}
The projection $M \to M_A$ is $A$-equivariant for the trivial action on the right-hand side; it therefore induces an $A$-equivariant morphism $\Diag_S(M_A) \to \Diag_S(M)$, which necessarily factors through a morphism $\Diag_S(M_A) \to ( \Diag_S(M) )^A$. To prove that this morphism is an isomorphism we can assume that $S=\Spec(k)$ is affine. Then $( \Diag_S(M) )^A$ is the spectrum of the quotient $R$ of the group algebra $k[M]$ by the ideal generated by the elements of the form $x - a \cdot x$ for $a \in A$ and $x \in k[M]$. For any $k$-algebra $R'$ we have
\begin{multline*}
\Hom_{k\mhyphen\mathrm{alg}}(R,R') = \Hom_{k\mhyphen\mathrm{alg}}(k[M],R')^A = \Hom_{\mathrm{gps}}(M,(R')^\times)^A \\
= \Hom_{\mathrm{gps}}(M_A,(R')^\times) = \Hom_{k\mhyphen\mathrm{alg}}(k[M_A],R')
\end{multline*}
where $k\mhyphen\mathrm{alg}$ is the category of $k$-algebras, $\mathrm{gps}$ is the category of groups, and in all cases the $A$-action is induced in the natural way by the action on $M$.
We deduce an identification $R=k[M_A]$, which finishes the proof.
\end{proof}

Lemma~\ref{lem:torus_fix_pts_diag} shows that $( \Diag_S(M) )^A$ is always flat over $S$. If we assume that $M$ is of finite type, then by~\cite[Exp.~VIII, Proposition~2.1(e)]{sga3.2} this group scheme is smooth over $S$ iff the order of the torsion subgroup of $M_A$ is prime to all residual characteristics of $S$. Still assuming that $M$ is of finite type, $( \Diag_S(M) )^A$ is geometrically connected iff either $M_A$ is torsion free or 
$S$ has exactly one residual characteristic $\ell>0$ and the torsion part of $M_A$ is an $\ell$-group.

\subsection{The case of \texorpdfstring{$\mathrm{SL}_{2n+1}$}{SL2n+1}}

Let $n \ge 1$, and consider the group scheme $\mathrm{SL}_{2n+1,\Z}$ over $\Spec(\Z)$.  
In this subsection, we study the construction of~\S\ref{ss:def-fixed-pts} for a certain action of the group $A = \Z/2\Z$ on $\mathrm{SL}_{2n+1,\Z}$.

\subsubsection{Action}
\label{sss:action-SLodd}

Denote by $J_{2n+1}$ the square matrix of size $2n+1$ whose coefficient in position $(i,j)$ is given by:
\begin{itemize}
\item $0$ if $i+j \neq 2n+2$;
\item $1$ if $i+j=2n+2$ and $i$ is even;
\item $-1$ if $i+j=2n+2$ and $i$ is odd.
\end{itemize}
That is, $J_{2n+1}$ has entries $(-1,1,-1,\ldots,-1, 1,-1)$ on the anti-diagonal and $0$ else, so $J_{2n+1}^2=\id$.
We then make $A=\Z/2\Z$ act on $\mathrm{SL}_{2n+1,\Z}$ by having the nontrivial element act by
\[
M \mapsto J_{2n+1} \cdot {}^{\mathrm{t}} \hspace{-1pt} M^{-1} \cdot J_{2n+1}.
\]

\subsubsection{The case of \texorpdfstring{$\mathrm{SL}_3$}{SL3}}
\label{sss:sl3_action}

First, let us consider the case $n=1$, and denote by~$\mathrm{U}_{3,\Z}$ the subgroup scheme of $\mathrm{SL}_{3,\Z}$ consisting of upper triangular unipotent matrices.
For any ring $R$, the action of the nontrivial element of $A$ on $\mathrm{U}_{3,\Z}(R)$ is given by
\[
 \begin{pmatrix}
  1 & x & y \\ 0 & 1 & z \\ 0 & 0 & 1 
 \end{pmatrix}
\mapsto
\begin{pmatrix}
  1 & z & xz-y \\ 0 & 1 & x \\ 0 & 0 & 1 
 \end{pmatrix}.
\]
It follows that the fixed-point subscheme $(\mathrm{U}_{3,\Z})^{\Z/2\Z}$ is the closed subgroup scheme defined by the equations
\[
 x=z, \quad xz-y=y;
\]
we therefore have
\[
\label{eqn:fixed-pts-SL3}
 (\mathrm{U}_{3,\Z})^{\Z/2\Z} \cong \Spec \bigl( \Z[x,y] / (x^2-2y) \bigr).
\]
Since $x^2 - 2y$ is monic as a polynomial in $x$, we see that $(\mathrm{U}_{3,\Z})^{\Z/2\Z}$ is finite and flat over $\Spec(\Z[y]) = \mathbb{A}^1_{\Z}$, and hence flat over $\Z$.  More generally, for an arbitrary nonempty scheme $S$, considering the group scheme
\[
\mathrm{U}_{3,S} := S \times_{\Spec(\Z)} \mathrm{U}_{3,\Z},
\]
in view of~\eqref{eqn:fixed-pts-bc}
we see that
\[
(\mathrm{U}_{3,S})^{\Z/2\Z} = S \times_{\Spec(\Z)} (\mathrm{U}_{3,\Z})^{\Z/2\Z}
\]
is flat over $S$.

We claim that $(U_{3,S})^{\Z/2\Z}$ is smooth over $S$ if and only if $2$ is not a residual characteristic of $S$. Indeed, if $2$ is a residual characteristic, and if $s \in S$ is such that the residue field $\kappa(s)$ has characteristic $2$, then
\[
\Spec(\kappa(s)) \times_S (\mathrm{U}_{3,S})^{\Z/2\Z} = \Spec(\kappa(s)[x,y]/x^2)
\]
is not reduced, and hence not smooth (see~\cite[\href{https://stacks.math.columbia.edu/tag/056T}{Tag 056T}]{stacks-project}), so $(\mathrm{U}_{3,S})^{\Z/2\Z}\to S$ is not smooth. To prove the converse implication we can assume
that $S$ is affine, say $S = \Spec(k)$. If $2$ is not a residual characteristic of $S$, then it is invertible in $k$, so $k \to k[x,y]/(x^2 - 2y)$ is a standard smooth ring map in the sense of~\cite[\href{https://stacks.math.columbia.edu/tag/00T6}{Tag 00T6}]{stacks-project} (because $\frac{\partial}{\partial y} (x^2 - 2y)$ is invertible), and finally $(\mathrm{U}_{3,S})^{\Z/2\Z} \to S$ is smooth by~\cite[\href{https://stacks.math.columbia.edu/tag/00T7}{Tag 00T7}]{stacks-project}.  

If $S = \Spec(\bk)$ is the spectrum of a field, the reduced subscheme $(\mathrm{U}_{3,\bk})^{\Z/2\Z}_{\mathrm{red}}$ is given by
\begin{equation}\label{eqn:u3-fixed-field}
(\mathrm{U}_{3,\bk})^{\Z/2\Z}_{\mathrm{red}}
=
\left\{\left(\begin{smallmatrix} 1 & x & x^2/2 \\ 0 & 1 & x \\ 0 & 0 & 1\end{smallmatrix}\right)\right\}\text{ if $\mathrm{char}(\bk) \ne 2$,} \qquad
\left\{\left(\begin{smallmatrix} 1 & 0 & y \\ 0 & 1 & 0 \\ 0 & 0 & 1\end{smallmatrix}\right)\right\}\text{ if $\mathrm{char}(\bk) = 2$.}
\end{equation}

Over $\Z[\frac{1}{2}]$, the map of group schemes $\textrm{SL}_{2,\mathbb Z[\frac{1}{2}]}\to (\mathrm{SL}_{3,\Z[\frac{1}{2}]})^{\Z/2\Z}$ given explicitily by 
\[
\begin{pmatrix}
a & b \\ c & d
\end{pmatrix} \mapsto
\begin{pmatrix}
a^2 & ab & \frac{1}{2} b^2 \\
2ac & ad+bc & bd \\
2c^2 & 2cd & d^2
\end{pmatrix}
\]
induces a closed immersion of group schemes
\begin{equation}
\label{eqn:SL3-fixed-pts}
\xi: \mathrm{PGL}_{2,\Z[\frac{1}{2}]} \to (\mathrm{SL}_{3,\Z[\frac{1}{2}]})^{\Z/2\Z}.
\end{equation}
(This morphism is induced by the adjoint action of $\mathrm{SL}_{2,\Z[\frac{1}{2}]}$ on its Lie algebra using the ordered basis 
$\left(\begin{smallmatrix}0 & -2 \\ 0 & 0\end{smallmatrix}\right), 
\left(\begin{smallmatrix}1 & 0 \\ 0 & -1\end{smallmatrix}\right), 
\left(\begin{smallmatrix}0 & 0 \\ 1 & 0\end{smallmatrix}\right)$.)
On the other hand, we will see below that $(\mathrm{SL}_{3,\F_2})^{\Z/2\Z}$ is nonreduced; we have a closed immersion of group schemes
\begin{equation}
\label{eqn:SL3-fixed-pts-2}
\xi: \mathrm{SL}_{2,\F_2} \to (\mathrm{SL}_{3,\F_2})^{\Z/2\Z}_{\mathrm{red}}
\end{equation}
where the right-hand side is the reduced group scheme associated with $(\mathrm{SL}_{3,\F_2})^{\Z/2\Z}$,
given explicitly by
\[
\begin{pmatrix}
a & b \\ c & d
\end{pmatrix} \mapsto
\begin{pmatrix}
a & 0 & b \\
0 & 1 & 0 \\
c & 0 & d
\end{pmatrix}.
\]
In fact, the maps \eqref{eqn:SL3-fixed-pts} and \eqref{eqn:SL3-fixed-pts-2} are isomorphisms as we show in Example \ref{ex:sl_odd}\eqref{ex:sl_odd.1}.

\subsubsection{Some morphisms}
Now, let us return to the case of a general $n \geq 1$.  
For any ring $R$, let $e_1, e_2, \ldots, e_{2n+1}$ be the standard basis of the free module $R^{2n+1} = \mathbb{A}_{\Z}^{2n+1}(R)$.  
For any $i \in \{1, \dots, n\}$, define an embedding
\[
\varphi_i: \mathbb{A}_{\Z}^3 \to \mathbb{A}_{\Z}^{2n+1} \qquad\text{by}\qquad
\begin{cases}
e_1 \mapsto e_i, \\
e_2 \mapsto e_{n+1}, \\
e_3 \mapsto (-1)^{i+n}e_{2n+2-i}
\end{cases}
\]
Let $M_i = \mathrm{span}_R\ \{e_j : j \notin \{i, n+1, 2n+2-i\}\}$.  Then $R^{2n+1} = \mathrm{image}(\varphi_i(R)) \oplus M_i$. 

Make $\mathrm{SL}_{3,\Z}(R)$ act on $R^{2n+1}$ by having it act trivially on $M_i$, and by the natural action on $\mathbb{A}^3_{\Z}(R)$ (transported across $\varphi_i$).  
This action defines a closed immersion of group schemes
\begin{equation}\label{eqn:tilde-morph-SL3}
f_{i,n} : \mathrm{SL}_{3,\Z} \to \mathrm{SL}_{2n+1,\Z}.
\end{equation}
Explicitly, $f_{i.n}$ sends a matrix $M = (m_{rs})_{1 \le r,s\le 3}$ in $\mathrm{SL}_{3,\Z}(R)$ to the $(2n+1) \times (2n+1)$ matrix whose entry in position $(j,k)$ is given by the following table:
\[
\hbox{\small$\begin{array}{llcll}
\textit{Position} & \textit{Entry} && \textit{Position} & \textit{Entry}\\
\cline{1-2}\cline{4-5}
(i,i) & m_{11} &&
  (2n+2-i,i) & (-1)^{i+n}m_{31} \\
(i,n+1) & m_{12} &&
  (2n+2-i,n+1) & (-1)^{i+n}m_{32} \\
(i,2n+2-i) & (-1)^{i+n} m_{13} &&
  (2n+2-i,2n+2-i) & m_{33} \\
(n+1,i) & m_{21} &&
  (j,j),\ j \notin \{i,n+1,2n+2-i\} & 1 \\
(n+1,n+1) & m_{22} &&
  \text{all other entries} & 0 \\
(n+1, 2n+2-i) & (-1)^{i+n} m_{23}
\end{array}$}
\]
One checks by explicit computation that this morphism is equivariant with respect to the actions of $\Z/2\Z$ considered above; it therefore restricts to a morphism of group schemes
\[
(\mathrm{SL}_{3,\Z})^{\Z/2\Z} \to (\mathrm{SL}_{2n+1,\Z})^{\Z/2\Z}.
\]

\section{Pinned reductive group schemes}
\label{sec:pinned-red-gp-sch}

\subsection{Definition}
\label{ss:pinned-red-groups}

Let $S$ be a nonempty scheme, and let $(\bG,\bT, M, \Roots, \Delta, X)$ be a pinned reductive group scheme over $S$ in the sense of~\cite[Exp.~XXIII, Definition~1.1]{sga33}. In concrete terms:
\begin{itemize}
\item
 $\bT$ is a maximal torus of $\bG$, $M$ is a free abelian group of finite rank, and we are given an isomorphism of $S$-group schemes $\bT \cong \Diag_S(M)$;
 \item
 $\Roots \subset M$ is a root system of $\bG$ with respect to $\bT$ such that $(M,\Roots)$ defines a splitting of $\bG$ in the sense of~\cite[Exp.~XXII, D\'efinition~1.13]{sga33};
 \item
 $\Delta \subset \Roots$ is a system of simple roots;
 \item
  $X = (X_\alpha : \alpha \in \Delta)$ is a collection of elements in $\Lie(\bG)$ such that each $X_\alpha$ is a nowhere vanishing section of the invertible $\scO_S$-module $\Lie(\bG)^\alpha$.
\end{itemize}
The datum of $\Delta \subset \Roots$ determines a subset of positive roots, which will be denoted $\Roots_+$.

Let us comment briefly on these data, following~\cite[Exp.~XXII, Proposition~2.2]{sga33}.
Consider a reductive group scheme $\bG$ over $S$ with a maximal torus $\bT$. Saying that $\bT$ is split is the same as saying that there exists a free abelian group $M$ and an isomorphism $\bT \cong \Diag_S(M)$. 
If $S$ is connected, then $M$ is canonically determined by $\bT$, since it identifies with the group of $S$-scheme morphisms from $\bT$ to $\mathbb{G}_{\mathrm{m},S}$. 

Next, 
we have the functor of roots $\mathcal{R}$ of $\bG$ with respect to $\bT$, which is a locally constant, finite scheme, realized as an open and closed subscheme of the group scheme $\underline{\textrm{Hom}}_{S\textrm{-GrpSch}}(\bT,\bG_{\mathrm{m},S})$, see~\cite[Exp.~XIX, Proposition~3.8]{sga33}. 
Given a subset $\Roots$ of the group of morphisms of $S$-group schemes from $\bT$ to $\mathbb{G}_{\mathrm{m},S}$,
 $\Roots$ is a root system for $\bG$ with respect to $\bT$ iff the canonical inclusions of $\Roots_S$ and $\mathcal{R}$ into $\underline{\textrm{Hom}}_{S\textrm{-GrpSch}}(\bT,\bG_{\mathrm{m},S})$ induce an isomorphism $\Roots_S \to \mathcal{R}$.

We now observe
that when $\bG$ admits a split maximal torus $\bT \cong \Diag_S(M)$ and $S$ is connected, a root system always exists, and is unique.
Indeed, the action of $\bT$ on $\Lie(\bG)$ determines an $M$-grading on this $\scO_S$-module. If $\Roots \subset M$ is the set of nonzero weights for this action, for any $\alpha \in \Roots$ the corresponding $\alpha$-weight space $\Lie(\bG)^\alpha$ is a direct summand in $\Lie(\bG)$, and hence a locally free sheaf. Since its rank is locally constant (see~\cite[\href{https://stacks.math.columbia.edu/tag/01C9}{Tag 01C9}]{stacks-project}), it has to be constant (and positive), so $\alpha$ is a root for $\bG$ by~\cite[Exp.~XIX, D\'efinition~3.2]{sga33}. In view of~\cite[Exp.~XIX, D\'efinition~3.2]{sga33}, $\Roots$ is therefore a root system for $\bG$ with respect to $\bT$. 

Let us continue with the assumptions of the previous paragraph. As explained above, for any $\alpha \in \Roots$ the root subspace $\Lie(\bG)^\alpha$ is a locally free $\scO_S$-module of rank one, see~\cite[Exp.~XIX, \S 3.4]{sga33}. 
The pair $(M,\Roots)$ defines a splitting of $\bG$ with respect to $\bT$ in the sense of~\cite[Exp.~XXII, D\'efinition~1.13]{sga33} iff each $\Lie(\bG)^\alpha$ is free, which is automatic e.g.~if $\mathrm{Pic}(S)$ is trivial. Under this assumption, of course a collection $(X_\alpha : \alpha \in \Delta)$ of nowhere vanishing sections of the root subgroups attached to any choice of simple roots exists.

In conclusion, in case $S$ is connected and $\mathrm{Pic}(S)$ is trivial (e.g.~if $S$ is the spectrum of a principal ideal domain), the datum of a pinned reductive group scheme over $S$ is equivalent to the datum of a reductive group with a given split maximal torus, a system of simple roots, and a collection of nowhere vanishing sections of the associated simple root subspaces.

\begin{ex}
\label{ex:roots-A2n}
Below we will use the standard pinning of the group scheme $\bG=\mathrm{SL}_{2n+1,\Z}$ over $\Spec(\Z)$ (for $n \geq 1$). In this case: 
\begin{itemize}
\item
$\bT$ is the subgroup of diagonal matrices;
\item
$M$ is the quotient $\Z^{2n+1} / \Delta \Z$ (where $\Delta \Z$ is the diagonal copy of $\Z$ in $\Z^{2n+1}$);
\item
$\Roots = \{[\varepsilon_i - \varepsilon_j] : i \neq j \in \{1, \dots, 2n+1\} \}$ (where $(\varepsilon_1, \dots, \varepsilon_{2n+1})$ is the standard basis of $\Z^{2n+1}$, and $[\lambda]$ is the class of an element $\lambda \in \Z^{2n+1}$ in $M$);
\item
$\Delta = \{[\varepsilon_i - \varepsilon_{i+1}] : i \in \{1, \dots, 2n\}\}$;
\item
if $\alpha = [\varepsilon_i - \varepsilon_{i+1}]$, then $X_\alpha$ is the matrix whose unique nonzero coefficient is $1$ in position $(i,i+1)$.
\end{itemize}
\end{ex}

\subsection{Automorphisms}
\label{ss:pinned-red-groups-aut}

Let us now come back to the case of a general base scheme $S$. From now on in this section we fix
a pinned reductive group scheme
\begin{equation}
\label{eqn:pinned-gp}
(\bG,\bT, M, \Roots, \Delta, X)
\end{equation}
over $S$. We can then consider the group
\[
\mathrm{Aut}(\bG,\bT, M, \Roots, \Delta, X)
\]
of automorphisms $f: \bG \to \bG$ of $\bG$ that preserve the given pinning, in the sense of~\cite[Exp.~XXIII, D\'efinition~1.3]{sga33}. By definition (see in particular~\cite[Exp.~XXII, D\'efinition~4.2.1]{sga33}), any such automorphism $f$ restricts to an automorphism of $\bT$ induced by an automorphism of $M$,\footnote{As noted in~\S\ref{ss:ex-diag-gps}, in case $S$ is connected, the condition that the restriction to $\bT$ is induced by an automorphism of $M$ is automatically satisfied.} which preserves $\Roots$ and $\Delta$, and it permutes the collection $X$ according to its action on $\Delta$. In fact, as noted in~\cite[Exp.~XXII, Remarque~4.2.2]{sga33}, $f$ determines an automorphism of the root datum
\[
(M,M^\vee, \Roots, \Roots^\vee)
\]
attached to $\bG$, and by~\cite[Exp.~XXIII, Th\'eor\`eme~4.1]{sga33} this procedure identifies 
$\mathrm{Aut}(\bG,\bT, M, \Roots, \Delta, X)$
with the group of automorphisms of the root datum $(M,M^\vee, \Roots, \Roots^\vee)$ stabilizing the subset $\Delta \subset \Roots$. We will say that a group $A$ acts on $\bG$ \emph{by pinned automorphisms} if it acts via a group homomorphism
\[
A \to \mathrm{Aut}(\bG,\bT, M, \Roots, \Delta, X).
\]
In this situation, there is an induced action of $A$ on $M$ preserving $\Roots$ and $\Roots_+$.

\subsection{Root subgroups, Borel subgroup, and unipotent subgroup}
\label{ss:root-subgps}

Recall that for any root $\gamma \in \Roots$, there is a closed immersion
\begin{equation}\label{eqn:root-subgp}
\exp_\gamma : \Lie(\bG)^\gamma \hookrightarrow \bG
\end{equation}
whose image, denoted by $\bU_\gamma$, is a closed subgroup scheme of $\bG$: see~\cite[Exp.~XXII, Th\'eor\`eme~1.1]{sga33}. 

We endow the subset $\Roots_+ \subset \Roots$ with some arbitrary order. Then one can consider the product morphism
\[
 \prod_{\alpha \in \Roots_+} \bU_\alpha \to \bG,
\]
where the product on the left-hand side (to be understood as fiber product over $S$) is taken with respect to our chosen order on $\Roots_+$.  This morphism is a closed immersion, and its image $\bU$ is a subgroup scheme which does not depend on the choice of order on $\Roots_+$; see~\cite[Exp.~XXII, \S 5.5]{sga33}. The product morphism
\[
 \bT \ltimes_S \bU \to \bG
\]
is also a closed immersion, and a homomorphism of group schemes; its image will be denoted $\bB$. On geometric fibers of $\bG \to S$, $\bB$ is a Borel subgroup in the usual sense, and $\bU$ is its unipotent radical. Similar considerations using the negative roots $-\Roots_+$ produce a closed subgroup scheme denoted $\bU^-$.

\subsection{Chevalley systems}
\label{ss:chevalley}

Recall that the set $X = (X_\alpha: \alpha \in \Delta)$ is indexed by the simple roots.  A \emph{Chevalley system} is a collection $(Y_\alpha : \alpha \in \Roots)$ parametrized by $\Roots$, where each $Y_\alpha$ is again a nowhere vanishing section of $\Lie(\bG)^\alpha$, and where the entire collection is subject to certain conditions, spelled out in~\cite[Exp.~XXIII, D\'efinition~6.1]{sga33}.
By~\cite[Exp.~XXIII, Proposition~6.2]{sga33}, a Chevalley system exists. 
More specifically, the proof of this proposition shows that there exist Chevalley systems which satisfy the following additional conditions:
\begin{itemize}
\item
for any $\alpha \in \Delta$ we have $X_\alpha = Y_\alpha$;
\item
for any $\alpha \in \Roots_+$ the sections $Y_\alpha$ and $Y_{-\alpha}$ are dual to each other with respect to the pairing of~\cite[Exp.~XX, Corollaire~2.6]{sga33}.
\end{itemize}
When considering Chevalley systems below we will always tacitly assume (following e.g.~the conventions in~\cite[\S3.2.2]{BT84}) that these additional conditions are satisfied.
In this case, one can use the notation $(X_\alpha : \alpha \in \Roots)$ for a Chevalley system, and this system is determined by the ``positive'' subset $(X_\alpha : \alpha \in \Roots_+)$.

According to~\cite[Exp.~XXIII, Corollaire~6.5]{sga33}, one consequence of the conditions in the definition is that in a Chevalley system, if $\alpha$ and $\beta$ are roots such that $\alpha+\beta$ is also a root, then
\begin{equation}
\label{eqn:chevalley-rule}
[X_\alpha, X_\beta] = \pm rX_{\alpha+\beta}
\quad
\begin{array}{c}
\text{where $r \in \{1,2,3\}$ is the smallest positive} \\
\text{integer such that $\beta - r\alpha$ is not a root.}
\end{array}
\end{equation}
Of course, if $\alpha + \beta$ is not a root then $[X_\alpha, X_\beta]=0$.

\begin{ex}\label{ex:D4-chevalley}
Let us give an example where one can write down (the positive part of) a Chevalley system explicitly in terms of a pinning $(X_\alpha : \alpha \in \Delta)$.
Assume that $\bG$ is of type $\mathsf{D}_4$, and number the simple roots as $\alpha_1, \alpha_2, \alpha_3, \alpha_4$ with $\langle \alpha_2, \alpha_i^\vee \rangle=-1$ for $i \in \{1,3,4\}$. There are eight other positive roots.  One can easily check that the following vectors are the positive part of a Chevalley system:
\[
\hbox{\footnotesize$
\begin{aligned}
X_{\alpha_i + \alpha_2} &= [X_{\alpha_i}, X_{\alpha_2}] \quad(i = 1,2,3), &
  X_{\alpha_2+\alpha_3+\alpha_4} &= [[X_{\alpha_3}, X_{\alpha_2}], X_{\alpha_4}], \\
X_{\alpha_1+\alpha_2+\alpha_3} &= [[X_{\alpha_1}, X_{\alpha_2}], X_{\alpha_3}], &
  X_{\alpha_1+\alpha_2+\alpha_3+\alpha_4} &= [[[X_{\alpha_1},X_{\alpha_2}], X_{\alpha_3}], X_{\alpha_4}], \\
X_{\alpha_1+\alpha_2+\alpha_4} &= [[X_{\alpha_1}, X_{\alpha_2}], X_{\alpha_4}], &
  X_{\alpha_1+2\alpha_2+\alpha_3+\alpha_4} &= [[[[X_{\alpha_1},X_{\alpha_2}], X_{\alpha_3}], X_{\alpha_4}], X_{\alpha_2}].
\end{aligned}$}
\]
\end{ex}

Once a Chevalley system $(X_\alpha : \alpha \in \Roots_+)$ as above is fixed, for any $\alpha \in \Roots_+$ there exists a unique morphism of $S$-group schemes
\begin{equation}
\label{eqn:sl2-maps}
\varphi_\alpha : \mathrm{SL}_{2,S} \to \bG
\end{equation}
which satisfies
\[
\varphi_\alpha \begin{pmatrix} 1 & a \\ 0 & 1 \end{pmatrix} = \exp_\alpha( a X_\alpha), \qquad \varphi_\alpha \begin{pmatrix} 1 & 0 \\ a & 1 \end{pmatrix} = \exp_{-\alpha}( a X_{-\alpha})
\]
for any $a \in \bG_{\mathrm{a},S}$,
see~\cite[Exp.~XX, Corollaire~2.6]{sga33}.
In this case we automatically have
\[
\varphi_\alpha \begin{pmatrix} a & 0 \\ 0 & a^{-1} \end{pmatrix} = \alpha^\vee(a)
\]
for any $a \in \bG_{\mathrm{m},S}$. We set
\[
n_\alpha := \varphi_\alpha \begin{pmatrix} 0 & 1 \\ -1 & 0 \end{pmatrix}.
\]
(This section coincides with the section denoted $w_\alpha(\mathrm{X}_\alpha)$ in~\cite[Exp.~XXIII, D\'efinition~6.1]{sga33}.)

\subsection{Reduction to simply connected quasi-simple groups}
\label{ss:reduction-scqs}

We will say that a pinned reductive group scheme $(\bG,\bT, M, \Roots, \Delta, X)$ is quasi-simple and simply connected if $\Roots$ is indecomposable and $\Roots^\vee$ generates $M^\vee$ (or, in other words, if $\bG_{\overline{s}}$ is quasi-simple and simply connected in the usual sense of semisimple algebraic groups for any geometric point $\overline{s}$ of $S$).
For some constructions below, we will reduce the problem to this case (or to products of such groups) as follows.

The root datum of our group $\bG$ with respect to $\bT$ is $(M,M^\vee, \Roots, \Roots^\vee)$. We set $M_\sconn:=\Hom_{\Z}(\Z \Roots^\vee, \Z)$, and $M_\sconn^{\vee}:= \Z \Roots^\vee$. We have natural (dual) maps $M_\sconn^{\vee} \to M^\vee$ and $M \to M_\sconn$. The latter morphism is injective on the subset $\Roots$, which can therefore also be regarded as a subset of $M_\sconn$. We then have a morphism of root data
\[
(M_\sconn,M_\sconn^\vee, \Roots, \Roots^\vee) \to (M,M^\vee, \Roots, \Roots^\vee)
\]
in the sense of~\cite[Exp.~XXI, D\'efinition~6.1.1]{sga33}.
If we denote by
\[
(\bG_\sconn, \bT_\sconn, M_\sconn, \Roots, \Delta, X_\sconn)
\]
the pinned reductive group scheme over $S$ with root datum $(M_\sconn,M_\sconn^\vee, \Roots, \Roots^\vee)$, then this morphism corresponds to a morphism of pinned groups
\[
\bG_\sconn \to \bG.
\]
Moreover, for any geometric point $\overline{s}$ of $S$, this morphism identifies $(\bG_\sconn)_{\overline{s}}$ with the simply connected cover of the derived subgroup of $\bG_{\overline{s}}$. The roots of $\bG$ and $\bG_\sconn$ are the same, and our morphism $\bG_\sconn \to \bG$ restricts to an isomorphism on root subgroups with the same label, identifying the given pinnings.

One can make $\bG_\sconn$ more concrete as follows. 
Consider the decomposition
\[
\Roots = \bigsqcup_{i \in I} \Roots_i
\]
of $\Roots$ as a direct sum of indecomposable constituents. This determines a direct-sum decomposition 
\[
M_\sconn = \bigoplus_{i \in I} M_{\sconn,i},
\]
and hence a product decomposition (where the product is fiber product over $S$)
\begin{equation}
\label{eqn:decomp-H}
\bG_\sconn = \prod_{i \in I} \bG_{\sconn,i}
\end{equation}
where $\bG_{\sconn,i}$ is the pinned reductive group scheme over $S$ with root datum
\[
(M_{\sconn,i},(M_{\sconn,i})^\vee, \Roots_i, \Roots^\vee_i).
\]

\section{Equivalence classes of roots and twisted \texorpdfstring{$\mathrm{SL}_2$}{SL2}-maps}
\label{sec:equiv-roots}

\subsection{An equivalence relation on roots}
\label{ss:equiv-relation}

In this subsection we consider an arbitrary reduced root system $\Roots$ in a real vector space $V$, and a basis $\Delta \subset \Roots$ of $\Roots$. We assume we are given a group $A$ and an action of $A$ on $V$ preserving $\Roots$ and $\Delta$. We will denote by $\Roots_+$ the system of positive roots determined by $\Delta$. (This subset is also preserved by $A$.) Let us consider the equivalence relation $\sim$ on $\Roots_+$ defined as follows:
\begin{equation}
\label{eqn:roots-equiv}
\alpha \sim \beta \quad\text{if $
\sum_{\gamma \in A \cdot \alpha} \gamma $ and 
$\sum_{\delta \in A \cdot \beta} \delta$ are scalar multiples of one another.}
\end{equation}
(Here we mean scalar multiples
in the real vector space $V$. This relation of course depends on the group $A$ and its action on $\Roots$, although it does not appear in the notation.)

First, assume that $\Roots$ is indecomposable. In this case this equivalence relation is studied in~\cite{steinberg}, and this analysis (based on a case-by-case verification) shows that
each equivalence class $E$ for $\sim$ is of one of the following two forms.
\begin{enumerate}
\item[(i)] $E$ is a single $A$-orbit, and if $\alpha, \beta \in E$ then
$\alpha+\beta$ is not a root.
\item[(ii)] $E$ is of the form $\{ \alpha, a \cdot \alpha, \alpha + a \cdot \alpha \}$ for some $a \in A$ and $\alpha \in \Roots$; in this case, $\{ \alpha, a \cdot \alpha \}$ is an $A$-orbit, and $\alpha + a\cdot \alpha$ is fixed by $A$.
\end{enumerate}
In more detail, if the image of $A$ in the automorphism group of the Dynkin diagram of $\Roots$ is a cyclic group, then this statement is~\cite[Claim (2$'$) in the proof of Theorem~8.2]{steinberg}. The only case not covered is that in which $\Roots$ is of type $\mathsf{D}_4$, and the image of $A$ is the full automorphism group (which is the symmetric group $\mathfrak{S}_3$).  In that case, a direct calculation shows that there are six equivalence classes for $\sim$, each of type (i).

As explained in~\cite{steinberg}, equivalence classes of type (ii) occur if and only if $\Roots$ is of type $\mathsf{A}_{2n}$ and $A$ acts nontrivially. More explicitly, if we choose a labeling $\alpha_1, \ldots, \alpha_{2n}$ of the simple roots such that $\langle \alpha_i, \alpha_j^\vee \rangle = -1$ if $|j-i|=1$, then the equivalence classes of type (ii) are the sets of the form
\begin{equation}
\label{eqn:subsets-A2n}
\{ \alpha_i + \cdots + \alpha_n, \quad \alpha_{n+1} + \cdots + \alpha_{2n+1-i}, \quad \alpha_i + \cdots + \alpha_{2n+1-i} \}
\end{equation}
with $i \in \{1, \dots, n\}$. 

Now we consider the general case. Write 
\begin{equation}
\label{eqn:decomposition-Roots-indec}
\Roots = \bigsqcup_{i \in I} \Roots_i
\end{equation}
for the decomposition of $\Roots$ as a direct sum of its indecomposable constituents. Given $a \in A$ and $i \in I$ there exists a unique $\sigma(a)(i) \in I$ such that $a(\Roots_i)=\Roots_{\sigma(a)(i)}$, and this operation defines a group homomorphism
\begin{equation}
\label{eqn:action-A-I}
\sigma : A \to \mathfrak{S}_I.
\end{equation}
For any $i \in I$, we denote by $V_i \subset V$ the subspace spanned by $\Roots_i$, by $A_i = \{a \in A \mid \sigma(a)(i)=i\}$ the stabilizer of the component $\Roots_i$, and by $\sim_i$ the equivalence relation defined as above for the action of $A_i$ on the root system $\Roots_i$.

\begin{lem}
If $\alpha,\beta \in \Roots_+$, we have $\alpha \sim \beta$ if and only if there exist $a \in A$ and $i \in I$ such that $a \cdot \alpha$ and $\beta$ both belong to $\Roots_i$, and moreover $(a \cdot \alpha) \sim_i \beta$.
\end{lem}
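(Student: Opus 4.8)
The plan is to compare the two sums attached to a positive root and its $A$-orbit, using the decomposition $V = \bigoplus_{i \in I} V_i$ and the fact that this decomposition is orthogonal for any $A$-invariant inner product on $V$. First I would fix an $A$-invariant inner product on $V$ (such a thing exists since $A$ acts through the finite group $\mathrm{Aut}(\Roots,\Delta)$), so that distinct components $V_i$, $V_j$ are mutually orthogonal and each $V_i$ is $A$-stable as a set of subspaces permuted by $\sigma$. For $\alpha \in \Roots_+$, the $A$-orbit $A \cdot \alpha$ decomposes according to which component each element lies in; writing $s_\alpha := \sum_{\gamma \in A \cdot \alpha} \gamma$, the contribution of $A \cdot \alpha$ to the component $\Roots_i$ is a sum over those $\gamma \in A \cdot \alpha$ lying in $\Roots_i$, and $s_\alpha$ is the sum of these contributions, one in each $V_i$ over the $\sigma$-orbit of components hit by $A \cdot \alpha$.

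The key observation is the following: if $\alpha \in \Roots_i$, then $s_\alpha = \sum_{j \in \sigma(A)(i)} (\text{component in } V_j)$, and by $A$-equivariance all these components have the same norm and are permuted transitively by $A$; moreover the component lying in $V_i$ itself is exactly $\sum_{\delta \in A_i \cdot \alpha} \delta$, the sum attached to the $A_i$-orbit of $\alpha$ inside $\Roots_i$. Now suppose $\alpha, \beta \in \Roots_+$ with $\alpha \sim \beta$, i.e.\ $s_\alpha$ and $s_\beta$ are scalar multiples of each other and both nonzero (they are nonzero because a sum over an $A$-orbit of roots, grouped into the $W$-invariant-type expression above, cannot vanish — this uses that each $\sum_{\delta \in A_i \cdot \alpha}\delta$ is a nonzero vector fixed by $A_i$, a standard fact from the type-by-type analysis recalled just above). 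Since $s_\alpha$ is supported on the components indexed by $\sigma(A)(i)$ and $s_\beta$ on those indexed by $\sigma(A)(j)$ where $\alpha \in \Roots_i$, $\beta \in \Roots_j$, proportionality of $s_\alpha$ and $s_\beta$ forces these two sets of components to coincide; in particular there is $a \in A$ with $a \cdot \alpha \in \Roots_j$, and after replacing $\alpha$ by $a \cdot \alpha$ we may assume $i = j$. Projecting the proportionality $s_\alpha = \lambda s_\beta$ onto $V_i$ then gives $\sum_{\delta \in A_i \cdot \alpha} \delta = \lambda \sum_{\delta \in A_i \cdot \beta} \delta$, which is precisely $(a \cdot \alpha) \sim_i \beta$. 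Conversely, if $a \cdot \alpha$ and $\beta$ lie in $\Roots_i$ with $(a\cdot\alpha)\sim_i\beta$, then $s_{a \cdot \alpha} = s_\alpha$ and its $V_i$-component is proportional to that of $s_\beta$; by $A$-equivariance the proportionality propagates to all components in $\sigma(A)(i)$, giving $s_\alpha \parallel s_\beta$, i.e.\ $\alpha \sim \beta$.

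The main obstacle I anticipate is bookkeeping rather than conceptual: one must carefully track how the $A$-orbit of $\alpha$ distributes among the components $\Roots_j$ for $j$ in the $\sigma$-orbit of $i$, and verify that the $V_i$-component of $s_\alpha$ is genuinely the $A_i$-orbit sum $\sum_{\delta \in A_i \cdot \alpha}\delta$ (and not, say, a sum with multiplicities). This follows because the map $a \mapsto a \cdot \alpha$ restricted to a set of coset representatives of $A_i$ in $A$ hits each component in $\sigma(A)(i)$, and within $V_i$ the stabilizer of the component is exactly $A_i$, so the elements of $A \cdot \alpha$ landing in $\Roots_i$ are exactly $A_i \cdot \alpha$, each occurring once. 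A secondary point is the nonvanishing of $s_\alpha$, needed so that ``scalar multiples of one another'' behaves like an honest equivalence relation with well-defined supports; this is guaranteed by the explicit description of equivalence classes of types (i) and (ii) recalled above, from which one reads off that $\sum_{\delta \in A_i \cdot \alpha}\delta \neq 0$ in every case.
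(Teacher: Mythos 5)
Your proof is correct and follows essentially the same route as the paper: the key computation in both is that the projection of the orbit sum $\sum_{\gamma \in A\cdot\alpha}\gamma$ onto $V_j$ equals $a_{i,j}\cdot\bigl(\sum_{\delta \in A_i\cdot\alpha}\delta\bigr)$ when $j$ lies in the $\sigma$-orbit of $i$ and is zero otherwise, from which both directions follow by comparing supports and $V_i$-components. The auxiliary $A$-invariant inner product and the type-by-type appeal for nonvanishing are superfluous (the direct sum decomposition already provides the projections, and a nonempty sum of positive roots is automatically nonzero), but this does not affect correctness.
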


\begin{proof}
Let us write $i \leftrightarrow j$ if $i$ and $j$ belong to the same orbit of the action of $A$ on $I$ via $\sigma$.
In this case, fix an element $a_{i,j} \in A$ such that $a_{i,j}(\Roots_i)=\Roots_j$. Then if $\alpha \in \Roots_i$, the projection of $\sum_{a \in A} a \cdot \alpha$ to $V_j$ along the decomposition $V=\bigoplus_k V_k$ is  $a_{i,j} \cdot (\sum_{a \in A_i} a \cdot \alpha)$ if $i \leftrightarrow j$, and zero otherwise. The claim easily follows.
\end{proof}

This lemma and the analysis of the indecomposable case above show that, in general, each equivalence class $E$ for $\sim$ is of one of the following two forms.
\begin{enumerate}
\item[(i$'$)] $E$ is a single $A$-orbit, and if $\alpha, \beta \in A$ then
$\alpha+\beta$ is not a root.
\item[(ii$'$)] $\Roots$ contains an indecomposable constituent $\Roots'$ of type $\mathsf{A}_{2n}$ whose stabilizer $A'$ acts nontrivially on this constituent, and
\[
E = \bigsqcup_{a \in A/A'} a \cdot E'
\]
where $E'$ is a subset of the form~\eqref{eqn:subsets-A2n} in the given indecomposable constituent of $\Roots$.
\end{enumerate}

We will say that a positive root $\beta$ is \emph{special} if it belongs to an equivalence class $E$ of type (ii$'$) and is a sum of two roots which belong to $E$. In other words,
if $E$ is an equivalence class of type (ii$'$) and $E'$ is as above, then $E'$ is of the form $\{\alpha, a \cdot \alpha, \alpha + a \cdot \alpha\}$ for some $\alpha \in \Roots'$ and $a \in A'$; the special roots in $E$ are those of the form $b \cdot (\alpha + a \cdot \alpha)$ for some $b \in A$.  In this situation, $E$ is the union of exactly two $A$-orbits, one consisting of special roots, and the other of nonspecial roots.

\begin{rmk}
\label{rmk:equiv-closed}
Note that each equivalence class $E$ for $\sim$ is ``closed under positive combinations'' in the following sense: if $\alpha, \beta \in E$ are distinct and $i,j \in \Z_{\geq 1}$ are such that $i\alpha+j\beta \in \Roots$, then in fact $i\alpha +j\beta \in E$.  
If $E$ is of type (ii$'$), then this claim is clear from the explicit description above. If $E$ is of type (i$'$), 
we use the fact that if $\alpha,\beta$ are positive roots such that $\alpha + \beta \notin \Roots$, then
there are no $i,j \in \Z_{\geq 1}$ and such that $i\alpha+j\beta \in \Roots$. (In fact, since neither $\alpha+\beta$ nor $\alpha-\beta$ are roots we have $\langle \beta,\alpha^\vee \rangle$=0 by~\cite[Chap.~VI, Corollary to Theorem~1]{bourbaki}. If we assume for a contradiction that there exists a root of the form $i\alpha+j\beta$ with $i,j \geq 1$, and choose these coefficients such that $i+j$ is minimal, then since $\langle i\alpha+j\beta, \alpha^\vee \rangle = 2i>0$, by~\cite[Chap.~VI, Corollary to Theorem~1]{bourbaki} $(i-1)\alpha+j\beta$ is a root. By minimality we must have $i=1$, so that $j\beta$ is a root. Since $\Roots$ is reduced this forces $j=1$, which provides a contradiction since $\alpha+\beta$ is not a root.)
\end{rmk}

In~\S\ref{ss:fixed-pts-alg-closed-fields} we will use the following fact.

\begin{lem}
\label{lem:roots-coinvariants}
Let $(\alpha_1, \dots, \alpha_r)$ be representatives for the equivalence classes for $\sim$, and let $(\bar{\alpha}_1, \dots, \bar{\alpha}_r)$ be their images in the coinvariants $V_A$. Then each $\bar{\alpha}_i$ is nonzero, and the lines $(\R \cdot \bar{\alpha}_1, \dots, \R \cdot \bar{\alpha}_r)$ in $V_E$ are pairwise distinct.
\end{lem}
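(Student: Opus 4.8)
We want to show that the images $\bar\alpha_1,\dots,\bar\alpha_r$ of chosen representatives in the coinvariants $V_A$ are nonzero and span pairwise distinct lines. The key observation is that the coinvariants $V_A = V/\langle v - a\cdot v\rangle$ of a representation of a finite-type group $A$ over $\R$ can be identified $A$-equivariantly with the invariants $V^A$, via the averaging/projection map; more precisely, if we pick any $A$-stable inner product on $V$ (which exists since $\Roots$ spans $V$ and the $A$-action preserves $\Roots$, so $A$ acts through a finite group on $V$), then $V = V^A \oplus (V^A)^\perp$, and the projection $V \to V^A$ along $(V^A)^\perp$ induces an isomorphism $V_A \simto V^A$. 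Under this identification, the image of a vector $v$ is its orthogonal projection $\pi(v)$ onto $V^A$, which is a positive scalar multiple of $\sum_{a\in A}a\cdot v$ (more precisely $\frac{|A|}{\#(A\cdot v\text{-stabilizer})}$ times the average — in any case a positive multiple of $\sum_{\gamma\in A\cdot v}\gamma$ when $v$ is a root, because the stabilizer of a root acts trivially on its orbit-sum). So I would first set up this dictionary: $\bar\alpha_i \ne 0$ in $V_A$ iff $\sum_{\gamma\in A\cdot\alpha_i}\gamma \ne 0$ in $V$, and $\R\cdot\bar\alpha_i = \R\cdot\bar\alpha_j$ in $V_A$ iff $\sum_{\gamma\in A\cdot\alpha_i}\gamma$ and $\sum_{\gamma\in A\cdot\alpha_j}\gamma$ are nonzero scalar multiples of one another in $V$.

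\textbf{Nonvanishing.} Given the dictionary, nonvanishing of $\bar\alpha_i$ amounts to $\sum_{\gamma\in A\cdot\alpha_i}\gamma\ne 0$. Using the decomposition~\eqref{eqn:decomposition-Roots-indec} and the computation in the proof of the preceding lemma (the projection of $\sum_{a\in A}a\cdot\alpha$ to a component $V_j$ is $a_{i,j}\cdot(\sum_{a\in A_i}a\cdot\alpha)$ up to rescaling), it suffices to treat a single indecomposable $\Roots'$ with stabilizer $A'$ acting on it, and show $\sum_{a\in A'}a\cdot\alpha\ne 0$ for $\alpha\in\Roots'_+$. This follows because $\sum_{a\in A'}a\cdot\alpha$ is a nonempty sum of \emph{positive} roots (the positive system is $A$-stable), hence is a nonzero element — indeed a nonzero vector, since a sum of positive roots, each having strictly positive pairing with the sum of fundamental coweights (or: lying in the open half-space cut out by a strictly dominant regular functional), cannot vanish.

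\textbf{Distinctness of lines.} Suppose $\R\cdot\bar\alpha_i = \R\cdot\bar\alpha_j$ with $i\ne j$; via the dictionary this says $\sum_{\gamma\in A\cdot\alpha_i}\gamma$ and $\sum_{\gamma\in A\cdot\alpha_j}\gamma$ are scalar multiples of one another in $V$ — which is precisely the definition~\eqref{eqn:roots-equiv} of $\alpha_i\sim\alpha_j$. Since the $\alpha_i$ are representatives of \emph{distinct} equivalence classes, this is a contradiction. This direction is essentially immediate once the dictionary is in place.

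\textbf{Main obstacle.} The substantive point — and the one requiring care — is setting up the isomorphism $V_A\simto V^A$ correctly and checking that under it the image of a root $\alpha$ is a \emph{positive} multiple of its orbit-sum $\sum_{\gamma\in A\cdot\alpha}\gamma$ (not merely a nonzero multiple), so that "scalar multiples in $V_A$" translates back faithfully to "scalar multiples in $V$" in condition~\eqref{eqn:roots-equiv}, and so that nonvanishing is equivalent to nonvanishing of an honest sum of positive roots. One must also be slightly careful that $A$ may be infinite: it acts on $V$ through a finite quotient (since it preserves the finite spanning set $\Roots$), and all orbit-sums and averages are over this finite image, which is what makes the averaging argument legitimate. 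Once these foundational identifications are in hand, the remaining arguments are short.
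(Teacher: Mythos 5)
Your proof is correct, but it follows a genuinely different route from the paper. The paper's proof is very terse: it reduces to the case of an irreducible root system and then verifies the claim case-by-case, by inspecting the explicit descriptions of the root systems and of the possible actions of $A$ on them (i.e.\ it leans on the classification, as in Steinberg's analysis quoted earlier in \S\ref{ss:equiv-relation}). You instead give a uniform, classification-free argument: since $A$ preserves the finite spanning set $\Roots$, it acts on $V$ through a finite quotient, so the averaging projection identifies $V_A$ with $V^A$, and under this identification the image of a root $\alpha$ is a positive multiple of the orbit-sum $\sum_{\gamma \in A\cdot\alpha}\gamma$; nonvanishing then follows because an orbit-sum of positive roots pairs strictly positively with any functional positive on $\Roots_+$, and distinctness of the lines is literally the negation of the defining condition~\eqref{eqn:roots-equiv} for $\alpha_i \sim \alpha_j$. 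This makes the lemma an essentially formal consequence of the definition of $\sim$, valid without invoking the type (i$'$)/(ii$'$) dichotomy or Bourbaki's tables, which is arguably cleaner and more robust; what the paper's approach buys is brevity and the fact that the needed case analysis is already available from the earlier discussion. One small wording quibble: the relation $\sum_{a \in A}a\cdot v = |{\operatorname{Stab}(v)}|\sum_{\gamma \in A\cdot v}\gamma$ (over the finite image of $A$) is just orbit–stabilizer counting, so your parenthetical ``because the stabilizer of a root acts trivially on its orbit-sum'' is unnecessary; the positivity of the proportionality constant is automatic.
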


\begin{proof}
The proof can be easily reduced to the case $\Roots$ is irreducible. In this case, the claim can be checked case-by-case by inspecting the description of roots in each type (see e.g.~\cite{bourbaki}), and the various possibilities for the action of $A$.
\end{proof}

\subsection{Chevalley--Steinberg systems}

From now on in this section we fix a pinned reductive group scheme~\eqref{eqn:pinned-gp}
over $S$ as in~\S\ref{ss:pinned-red-groups} (without any special assumption on $S$), endowed with an action of a group $A$ by pinned automorphisms (see~\S\ref{ss:pinned-red-groups-aut}).
Below we will need to consider Chevalley systems which afford some compatibility property with our given action of $A$. (Such systems are called \emph{Chevalley--Steinberg systems} in~\cite[Definition~4.3]{landvogt} or~\cite[D\'efinition~2.1.4]{lourenco}.) The statement of this property involves the equivalence relation $\sim$ on $\Roots_+$ from~\S\ref{ss:equiv-relation} for the given action of $A$ on $\Roots$, seen as a root system in the subspace it generates in $\R \otimes_{\Z} M$.

\begin{prop}
\label{prop:chevalley-steinberg}
There exists a Chevalley system $(X_\alpha : \alpha \in \Roots)$ such that 
\begin{equation}
\label{eqn:action-A-root-vectors}
a \cdot X_\beta = X_{a \cdot \beta}
\end{equation}
for all $a \in A$ and all nonspecial $\beta \in \Roots_+$.
\end{prop}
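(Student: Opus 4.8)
The plan is to build the Chevalley--Steinberg system by choosing root vectors one equivalence class at a time, using the classification of equivalence classes into types (i$'$) and (ii$'$) established in \S\ref{ss:equiv-relation}. By the reduction in \S\ref{ss:reduction-scqs} (and since a Chevalley system for $\bG$ is the same as one for $\bG_\sconn$, compatibly with the $A$-action, because $\bG_\sconn \to \bG$ is an isomorphism on root subgroups identifying pinnings) we may assume $\bG$ is semisimple simply connected; and by the product decomposition~\eqref{eqn:decomp-H}, combined with the observation that $A$ permutes the factors $\bG_{\sconn,i}$, it suffices to treat a single $A$-orbit of indecomposable components at a time. Picking one component $\Roots_0$ in such an orbit with stabilizer $A_0$, I would first construct an $A_0$-compatible Chevalley--Steinberg system on $\bG_0$ (the factor indexed by that component), and then transport it to the other factors in the orbit via fixed choices of elements $a \in A$ carrying $\Roots_0$ to the other components; the resulting collection on the product is automatically $A$-equivariant on nonspecial roots. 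So the crux is the indecomposable case with $A$ acting (through a possibly nontrivial quotient) on the Dynkin diagram.

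In the indecomposable case I would start from the given pinning $(X_\alpha : \alpha \in \Delta)$, which by hypothesis already satisfies $a \cdot X_\alpha = X_{a \cdot \alpha}$ for simple $\alpha$. The idea is to extend to all of $\Roots_+$ by choosing, for each equivalence class $E$, one representative and defining the rest of the class by the bracket rules, making the choices $A$-equivariantly. For a class $E$ of type (i$'$): it is a single $A$-orbit, closed under positive combinations (Remark~\ref{rmk:equiv-closed}), and contains a ``minimal'' root $\beta$ (one expressible with fewest simple summands) whose $A$-orbit stabilizer is some $A_\beta$; build $X_\gamma$ for $\gamma \in A \cdot \beta$ by picking $X_\beta$ and setting $X_{a\cdot\beta} := a \cdot X_\beta$ — this is well-defined provided $X_\beta$ is $A_\beta$-invariant, which one arranges by averaging or by choosing $X_\beta$ inside the (free, rank-one) $A_\beta$-fixed part of the $\beta$-weight space after checking $A_\beta$ acts trivially there, which it does because $A_\beta$ fixes the line $\Lie(\bG)^\beta$ and acts on it by a character that must be trivial by compatibility with the pinning along a chain of simple root brackets. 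Then extend further up the class by~\eqref{eqn:chevalley-rule}, checking the signs and the integers $r \in \{1,2,3\}$ are $A$-invariant (they are, since $A$ preserves $\Roots$ and the relevant root strings). For a class $E$ of type (ii$'$): here $E = \{\alpha, a\cdot\alpha, \alpha + a\cdot\alpha\}$ up to the $A$-action; define $X_\alpha$ and $X_{a\cdot\alpha} := a \cdot X_\alpha$ on the nonspecial orbit exactly as above, and then transport across $A/A'$. The special root $\alpha + a\cdot\alpha$ is \emph{not} required to satisfy~\eqref{eqn:action-A-root-vectors}, so its root vector may be fixed later (e.g. by the bracket $[X_\alpha, X_{a\cdot\alpha}]$, up to sign and the factor $r$), and its $A$-equivariance is precisely the defect that Chevalley--Steinberg systems tolerate in type $\mathsf{A}_{2n}$.

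The main obstacle, as usual for Chevalley-system constructions, is verifying that these inductively-defined vectors genuinely assemble into a \emph{Chevalley system} in the sense of~\cite[Exp.~XXIII, D\'efinition~6.1]{sga33} — i.e. that the structure constants satisfy all the required relations, the duality condition $Y_\alpha \leftrightarrow Y_{-\alpha}$, and the compatibility with the $n_\alpha$'s — rather than just the bracket relation~\eqref{eqn:chevalley-rule}. The standard remedy is to note that the existence proof of~\cite[Exp.~XXIII, Proposition~6.2]{sga33} already produces a Chevalley system whose positive part is freely determined by \emph{any} choice of root vectors along a fixed set of generators of $\Roots_+$ under bracketing (as in Example~\ref{ex:D4-chevalley}), so it suffices to make those generating choices $A$-equivariant; the $A$-equivariance of the entire system on nonspecial roots then follows formally from the $A$-invariance of the bracket rule and the sign conventions, together with the fact (from \S\ref{ss:equiv-relation}) that no nonspecial root is a positive combination of two roots in a different equivalence class, so the construction never forces an incompatible choice. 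I would organize the argument so that this ``transport of structure'' is the only place any real computation occurs, and confine the case-analysis to citing Steinberg's classification of equivalence classes already quoted in~\S\ref{ss:equiv-relation}.
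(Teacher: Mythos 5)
Your global skeleton coincides with the paper's: reduce via $\bG_\sconn$ to a product of quasi-simple simply connected factors, treat one $A$-orbit of indecomposable components at a time by transporting a system from one factor, and in the indecomposable case make the choices $A$-equivariantly orbit by orbit. The first genuine gap is at the one point where the special/nonspecial dichotomy must do real work: when the stabilizer $A_\beta$ of a nonspecial positive root $\beta$ is nontrivial of even order, you need $A_\beta$ to act trivially on the line $\Lie(\bG)^\beta$, and your justification --- that the character is ``trivial by compatibility with the pinning along a chain of simple root brackets'' --- proves too much. The same reasoning would apply verbatim to the special root $\alpha_1+\alpha_2$ in type $\mathsf{A}_2$, where $a\cdot[X_{\alpha_1},X_{\alpha_2}]=[X_{\alpha_2},X_{\alpha_1}]=-[X_{\alpha_1},X_{\alpha_2}]$, so the character is $-1$. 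Triviality for nonspecial fixed roots is a genuine root-system computation (the ``brief calculation'' in the paper, following~\cite[Proposition~4.4]{landvogt}), not a formal consequence of the pinning; and ``averaging'' is not available either, since one cannot divide by $|A_\beta|$ over a general base and the average would vanish if the character were $-1$. (For odd-order stabilizers triviality is automatic because the character takes values in $\{\pm 1\}$; the even-order case is exactly the content you omit.)

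The second gap is the verification that your orbit-wise choices assemble into a Chevalley system. You delegate this to the claim that the existence proof of~\cite[Exp.~XXIII, Proposition~6.2]{sga33} produces a system ``freely determined by any choice of root vectors along a fixed set of generators of $\Roots_+$ under bracketing''; that is not what that proof establishes, and as stated it is essentially the assertion to be proved. The paper's route avoids this entirely and is worth adopting: fix any Chevalley system $(X_\gamma)_{\gamma\in\Roots}$ extending the pinning; since each $a \in A$ carries it to another Chevalley system extending the same pinning, one has $a \cdot X_\gamma = \pm X_{a\cdot\gamma}$, and since changing a Chevalley system by signs on nonsimple positive roots (with the dual changes on negative roots) again yields a Chevalley system, the whole problem reduces to an $A$-equivariant choice of signs orbit by orbit. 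That choice is possible on nonspecial orbits precisely because of the triviality statement above, together with the explicit system of Example~\ref{ex:D4-chevalley} in the $\mathsf{D}_4$ case where $A$ surjects onto $\mathfrak{S}_3$ (a case your sketch does not single out, though your orbit argument would cover it once the stabilizer fact is proved for order-$2$ and order-$3$ elements). A minor further point: for a class of type (i$'$) there is nothing to ``extend further up the class'' via~\eqref{eqn:chevalley-rule}, since by definition no two roots of such a class sum to a root, so that part of your bookkeeping is vacuous.
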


\begin{proof}
First, assume that $\Roots$ is indecomposable. In this case, the statement is essentially proved in~\cite[Proposition~4.4]{landvogt}. Let us give a brief outline of the argument, for the reader's convenience. The basic observation is that making sign changes in a Chevalley system on vectors associated with nonsimple positive roots (and the corresponding changes for the opposite roots) produces a new Chevalley system.

We can of course replace $A$ by its image in $\mathrm{Aut}(\bG,\bT, M, \Roots, \Delta, X)$.  Since $\bG$ is quasi-simple, the group $\mathrm{Aut}(\bG,\bT, M, \Roots, \Delta, X)$ is the automorphism group of a connected Dynkin diagram, and therefore has order $1$, $2$, or $6$ (the last possibility occurring only in type $\mathsf{D}_4$, where the group is $\mathfrak{S}_3$).  
The subgroup $A$ thus is cyclic of order $1$, $2$, $3$, or $A\cong \mathfrak{S}_3$.  One can then make sign changes on some elements $X_\alpha$ ($\alpha \in \Roots_+$) to ensure that the desired condition holds, by considering the various orbits $A \cdot \beta$ of nonspecial roots as follows (see~\cite[Proposition~4.4]{landvogt} for details).
\begin{itemize}
\item 
If $A$ is cyclic (i.e., of order $1$, $2$, or $3$) and the stabilizer in $A$ of $\beta$ has odd order (either $1$ or $3$), then one can ensure that
$a \cdot X_\gamma = X_{a \cdot \gamma}$ for all $a \in A$ and $\gamma \in A \cdot \beta$.
\item 
Assume now that $A$ has order $2$ and the stabilizer of $\beta$ is all of $A$ (so that $A \cdot \beta = \{\beta\}$). Denote by $a$ the unique nontrivial element in $A$. Since $\beta$ is not special, it is not of the form $\gamma + a \cdot \gamma$ for some $\gamma \in \Roots_+$. Then a brief calculation using root system combinatorics shows that $a \cdot X_\beta = X_{\beta}$ for any $a \in A$. 
\item When $A$ has order $6$, the argument in~\cite[Proposition~4.4]{landvogt} does not seem to be quite complete.  In this case, $\bG$ has type $\mathsf{D}_4$, and all equivalence classes for $\sim$ are of type (i). Applying $a$ to each of the equations in Example~\ref{ex:D4-chevalley} one sees that this Chevalley system satisfies
\[
a \cdot X_\beta = X_{a \cdot \beta} \qquad\text{for all $a \in A$.}
\]
(In fact, it is enough to check this for $a$ belonging to some set of generators of the group $A \cong \mathfrak{S}_3$.)
\end{itemize}

Now, let us explain how to treat the general case. Using the construction of~\S\ref{ss:reduction-scqs} one can assume that $\bG$ is a product of quasi-simple simply connected groups. (In fact, if $\bG_\sconn$ is as in~\S\ref{ss:reduction-scqs}, the root subgroups and the notion of Chevalley system coincide for $\bG_\sconn$ and $\bG$.) In this case, associated with the decomposition~\eqref{eqn:decomposition-Roots-indec} is the decomposition
\[
\bG = \prod_{i \in I} \bG_i.
\]
Recall also the action of $A$ on $I$ given by $\sigma$, see~\eqref{eqn:action-A-I}, and choose a subset $J \subset I$ of representatives for the $A$-orbits. For any $j \in J$, we also denote by $A_j \subset A$ the stabilizer of $j$, and choose representatives $a^j_1, \ldots, a^j_{r(j)}$ for the quotient $A/A_j$. We then have a bijection
\[
\bigsqcup_{j \in J} \Roots_j \times \{1, \ldots, r(j)\} \simto \Roots
\]
given by $(\alpha, k) \mapsto a^j_k \cdot \alpha$ if $\alpha \in \Roots_j$, which restricts to a bijection between nonspecial roots on each side. We choose a Chevalley system as in the proposition for each $\bG_j$ ($j \in J$), and then
set
\[
X_{a^j_k \cdot \alpha} = a^j_k \cdot X_\alpha
\]
if $\alpha \in \Roots_j$. It is easily checked that this procedure produces a Chevalley system with the required property.
\end{proof}

\begin{rmk}
In Proposition~\ref{prop:chevalley-steinberg} we have specified only what happens for nonspecial roots. In fact, as soon as special roots exist, there does not exist any Chevalley system which satisfies~\eqref{eqn:action-A-root-vectors} for all $a \in A$ and $\alpha \in \Roots$.
More precisely, this equality sometimes fails by a sign when $\beta$ is special. (It is instructive to work out this failure in the example of the action of $\Z/2\Z$ on $\mathrm{SL}_{3,\Z}$ considered in~\S\ref{sss:sl3_action}.). It is possible to specify these signs explicitly, but this require some choices. Since this will not be necessary below, we omit the details.
\end{rmk}

\subsection{Twisted \texorpdfstring{$\mathrm{SL}_2$}{SL2}-maps}
\label{ss:SL2-maps}

From now on we fix a Chevalley system $(X_\alpha : \alpha \in \Roots)$ as in Proposition~\ref{prop:chevalley-steinberg}.
Our goal in the rest of this section is to explain the construction of analogues of the maps~\eqref{eqn:sl2-maps} for the group $\bG^A$. More specifically, these maps will be
attached to equivalence classes for $\sim$. In case $E$ is of type (i$'$), it will be given by a group scheme morphism
\[
\imath_E : \mathrm{SL}_{2,S} \to \bG^A.
\]
In case $E$ is of type (ii$'$), it will be given by a group scheme morphism
\[
\imath_E : (\mathrm{SL}_{3,S})^{\Z/2\Z} \to \bG^A,
\]
where the action on the left-hand side is that considered in~\S\ref{sss:sl3_action}. For orbits of type (i$'$) the construction is uniform, but depends on the choice of Chevalley system. For orbits of type (ii$'$) the construction is more ad hoc.

\subsubsection{Equivalence classes of type (i$'$)}
\label{sss:SL2maps-i}

We start with the easier case when $E$ is an equivalence class of type (i$'$). (In particular, this class consists of nonspecial roots.) For any $\alpha \in E$ we have a morphism $\varphi_\alpha$ as in~\eqref{eqn:sl2-maps}. Moreover, if $a \in A$ and $\alpha \in E$, since $a \cdot X_\alpha = X_{a \cdot \alpha}$, by uniqueness we have
\[
a \circ \varphi_\alpha = \varphi_{a \cdot \alpha}
\]
(where by abuse we denote by $a$ the action of $a$ on $\bG$).
By the comments in Remark~\ref{rmk:equiv-closed}, if $\alpha, \alpha' \in E$ are distinct then no positive combination of $\alpha$ and $\alpha'$ is a root, which implies that $\bU_\alpha$ and $\bU_{\alpha'}$ commute by the commutation relations~\cite[Exp.~XXII, Corollaire~5.5.2]{sga33}. As a consequence, the map
\[
\imath'_E := \prod_{\alpha \in E} \varphi_\alpha : \prod_{\alpha \in E}\mathrm{SL}_{2,S}\to \bG
\]
is a morphism of group schemes, which is moreover $A$-equivariant where $A$ acts on the left-hand side by permuting the factors (according to the action on $E$). Passing to $A$-fixed points and using the fact that
\[
\left( \prod_{\alpha \in E}\mathrm{SL}_{2,S} \right)^A = \mathrm{SL}_{2,S},
\]
we obtain the desired morphism $\imath_E$.

An important property of this morphism, which will be used below, is the following. The considerations above show that we have a natural closed immersion of group schemes
\[
\prod_{\alpha \in E} \bU_\alpha \to \bG.
\]
(Because the $\bU_\alpha$'s appearing here commute with one another, we do not need to specify an order on $E$.) The image of this morphism is stable under the action of $A$, and will be denoted $\bU_E$. If we denote by $\mathrm{U}_{2,S}$ the subgroup of $\mathrm{SL}_{2,S}$ of upper triangular unipotent matrices, then $\imath_E$ restricts to an isomorphism
\begin{equation}
\label{eqn:root-subgp-type-i}
\mathrm{U}_{2,S} \simto (\bU_E)^A.
\end{equation}
Moreover, we have
\begin{equation}\label{eqn:root-subgp-type-i-torus}
\imath_E \begin{pmatrix} a & 0 \\ 0 & a^{-1} \end{pmatrix} = \prod_{\alpha \in E} \alpha^\vee(a).
\end{equation}

\subsubsection{Equivalence classes of type (ii$'$)}
\label{sss:SL2maps-ii}

Now, let us fix an equivalence class $E$ of type (ii$'$). In this case, we use the considerations of~\S\ref{ss:reduction-scqs} to reduce the construction to the case where $\bG$ is a product of quasi-simple simply connected groups.
Namely, consider the group $\bG_\sconn$ constructed in~\S\ref{ss:reduction-scqs}.
The action of $A$ on $\bG$ induces an action on its root datum, which in turn provides an action on the root datum of $\bG_\sconn$, and finally an action on $\bG_\sconn$ by pinned group automorphisms. For this action, the morphism $\bG_\sconn \to \bG$ is $A$-equivariant, and hence restricts to a group scheme morphism $(\bG_\sconn)^A \to \bG^A$. The roots associated with these two group schemes coincide, as does the equivalence relations on $\Roots_+$ determined by the actions of $A$ (see~\S\ref{ss:equiv-relation}) and the notions of orbits of type (i$'$) or (ii$'$). It therefore suffices to construct our morphism for the group $\bG_\sconn$. 

Recall next the decomposition~\eqref{eqn:decomp-H} and the action of $A$ on $I$ determined by~\eqref{eqn:action-A-I}.
If we decompose $I$ into its orbits for this action:
\[
I = \bigsqcup_{j \in J} I_j
\]
and set $\bG_\sconn^j = \prod_{i \in I_j} \bG_{\sconn,i}$, then each $\bG_\sconn^j$ is stable under the action of $A$, and moreover by Lemma~\ref{lem:fixed-pts-properties}\eqref{it:fixed-pts-product} we have
\[
(\bG_\sconn)^A = \prod_{j \in J} (\bG_\sconn^j)^A.
\]
We can (and will) therefore assume that the action of $A$ on $I$ is transitive.

In this case, all constituents $\Roots_i$ have the same type. Since we are interested in equivalence classes of type (ii$'$), it suffices to consider the case when this type is $\mathsf{A}_{2n}$ for some $n \geq 1$. We can also assume that the stabilizer in $A$ of each component acts nontrivially on this component. For any $i \in I$, the intersection $\Delta \cap \Roots_i$ is a basis of $\Roots_i$. Let us fix an identification of $\fR_i$ and its basis $\Delta_i$ with the root system of $\mathrm{SL}_{2n+1}$ and its basis from Example~\ref{ex:roots-A2n}. (There exist exactly two such identifications; we choose one of them.) This determines an identification of pinned reductive group schemes
\[
\bG_{\sconn,i} = \mathrm{SL}_{2n+1,S}
\]
(where the pinning on the right-hand side is as in Example~\ref{ex:roots-A2n}). Taking these identifications together we obtain an identification
\[
\bG_\sconn = \bigl( \mathrm{SL}_{2n+1,S} \bigr)^{\times_S I}.
\]

The group of pinned automorphisms of $( \mathrm{SL}_{2n+1,S} )^{\times_S I}$ is $(\Z/2\Z)^{\times I} \rtimes \mathfrak{S}_I$; our given action of $A$ is therefore determined by a ``lift'' of~\eqref{eqn:action-A-I} to a group homomorphism $\tilde{\sigma} : A \to (\Z/2\Z)^{\times I} \rtimes \mathfrak{S}_I$. There exists $k \in \{1, \ldots, n\}$ such that our given equivalence class $E$ of type (ii$'$) is the union of the subsets
\[
\{ \alpha_k + \cdots + \alpha_n, \quad \alpha_{n+1} + \cdots + \alpha_{2n+1-k}, \quad \alpha_k + \cdots + \alpha_{2n+1-k} \}
\]
in the root system of each copy of $\mathrm{SL}_{2n+1,S}$. One can then consider the morphism
\[
 (f_{k,n})^{\times_S I} : ( \mathrm{SL}_{3,S} )^{\times_S I} \to ( \mathrm{SL}_{2n+1,S} )^{\times_S I},
\]
see~\eqref{eqn:tilde-morph-SL3}. This morphism is equivariant with respect to the actions of $(\Z/2\Z)^{\times I} \rtimes \mathfrak{S}_I$ on each side, so it induces a morphism
\[
 \bigl( ( \mathrm{SL}_{3,S} )^{\times_S I} \bigr)^{(\Z/2\Z)^{\times I} \rtimes \mathfrak{S}_I} \to \bigl( ( \mathrm{SL}_{2n+1,S} )^{\times_S I} \bigr)^{(\Z/2\Z)^{\times I} \rtimes \mathfrak{S}_I}
\]
between fixed points. Here the left-hand side identifies with $(\mathrm{SL}_{3,S})^{\Z/2\Z}$, and our given morphism $\tilde{\sigma}$ determines a morphism
\[
\bigl( ( \mathrm{SL}_{2n+1,S} )^{\times_S I} \bigr)^{(\Z/2\Z)^{\times I} \rtimes \mathfrak{S}_I} \to (\bG_\sconn)^A.
\]
This construction therefore provides the desired morphism
\[
\imath_E : (\mathrm{SL}_{3,S})^{\Z/2\Z} \to (\bG_\sconn)^A.
\]

\begin{rmk}
In the construction above we have chosen an identification of each $\Roots_i$ with the root system of type $\mathsf{A}_{2n}$. Changing these identifications amounts to composing the identification $\bG_\sconn = \bigl( \mathrm{SL}_{2n+1,S} \bigr)^{\times_S I}$ with the action of a certain element of $(\Z/2\Z)^{\times I}$. This change will lead to a different morphism $( \mathrm{SL}_{3,S} )^{\times_S I} \to \bG_\sconn$, but it will not affect its restriction $(\mathrm{SL}_{3,S})^{\Z/2\Z} \to (\bG_\sconn)^A$. In other words, the morphism $\imath_E$ does not depend on these choices.
\end{rmk}

The morphism we have constructed here again has a property similar to that explained at the end of~\S\ref{sss:SL2maps-i}. Namely, choosing any order on $E$ we can consider the product morphism
\[
\prod_{\alpha \in E} \bU_\alpha \to \bG.
\]
This morphism is a closed immersion, and its image is a subgroup scheme which does not depend on the choice of order; it will be denoted $\bU_E$. In particular, this image is stable under the action of $A$. Recall the subgroup scheme $\mathrm{U}_{3,S}$ of $\mathrm{SL}_{3,S}$ considered in~\S\ref{sss:sl3_action}. Then the morphism $\imath_E$ restricts to an isomorphism
\begin{equation}
\label{eqn:root-subgp-type-ii}
(\mathrm{U}_{3,S})^{\Z/2\Z} \simto (\bU_E)^A.
\end{equation}
To check this claim, one can e.g.~use the following fact.

\begin{lem}
\label{lem:product-gps}
Let $\bH$ be a pinned reductive group scheme over $S$ which is a quasi-simple simply connected group, and assume given a finite set $I$ and an action of a group $A$ on $\bH^{\times_S I}$ by pinned automorphisms. As in~\eqref{eqn:action-A-I} this determines an action of $A$ on $I$. If this action is transitive, then, for any $i \in I$, projection onto the component parametrized by $i$ induces an isomorphism of group schemes
\[
\bigl( \bH^{\times_S I} \bigr)^A \simto \bH^{A_i}
\]
where $A_i$ is the stabilizer of $i$ in $A$.
\end{lem}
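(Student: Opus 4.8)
The plan is to make the $A$-action completely explicit via the wreath-product structure of the pinned automorphism group, to write down an explicit candidate inverse to the projection $p_i$, and then to check that the two composites are the identity. Since $\bH$ and $\bH^{\times_S I}$ are affine, their fixed-point loci are closed subschemes, so all of this can be checked on functors of points, i.e.\ on $\bH(Y)$-elements for arbitrary $S$-schemes $Y$.

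First I would record the following description of the action. Since $\bH$ is quasi-simple, its root datum is indecomposable, so the pinned automorphism group of $\bH^{\times_S I}$ is the wreath product $\mathrm{Aut}(\bH)^{\times I} \rtimes \mathfrak{S}_I$, where $\mathfrak{S}_I$ permutes the factors and $\mathrm{Aut}(\bH)$ is the pinned automorphism group of $\bH$. Hence the given $A$-action is encoded by the homomorphism $\sigma \colon A \to \mathfrak{S}_I$ from~\eqref{eqn:action-A-I} together with ``component automorphisms'' $\tau_j(a) \in \mathrm{Aut}(\bH)$ ($j \in I$, $a \in A$) obeying the cocycle relation $\tau_j(ab) = \tau_j(a) \cdot \tau_{\sigma(a)^{-1}(j)}(b)$, in such a way that $a$ acts on $Y$-points by $(h_j)_{j \in I} \mapsto (\tau_j(a) \cdot h_{\sigma(a)^{-1}(j)})_{j \in I}$. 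In particular $a \mapsto \tau_i(a)$ restricts to a homomorphism $A_i \to \mathrm{Aut}(\bH)$, and it is for this action that $\bH^{A_i}$ is formed. A first easy observation is that $p_i$ does send $(\bH^{\times_S I})^A$ into $\bH^{A_i}$: the $i$-th component of the fixed-point equation for $b \in A_i$ reads $h_i = \tau_i(b) \cdot h_{\sigma(b)^{-1}(i)} = \tau_i(b) \cdot h_i$.

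Next I would construct the inverse. Using transitivity of the $A$-action on $I$, choose for each $j \in I$ an element $a_j \in A$ with $\sigma(a_j)(i) = j$, and take $a_i = e$. Define $\imath \colon \bH \to \bH^{\times_S I}$ on $Y$-points by $h \mapsto (\tau_j(a_j) \cdot h)_{j \in I}$ (that is, the diagonal followed by $\prod_j \tau_j(a_j)$). The crucial point, and essentially the only computation in the proof, is that $\imath$ maps $\bH^{A_i}$ into $(\bH^{\times_S I})^A$. Given $h \in \bH^{A_i}(Y)$ and $a \in A$, put $k = \sigma(a)^{-1}(j)$; then $\sigma(aa_k)(i) = j = \sigma(a_j)(i)$, so $b := a_j^{-1} a a_k$ lies in $A_i$ and $aa_k = a_j b$. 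Using the cocycle relation twice and then $\tau_i(b) \cdot h = h$, one gets
\[
\tau_j(a) \cdot \bigl( \tau_k(a_k) \cdot h \bigr) = \tau_j(aa_k) \cdot h = \tau_j(a_j b) \cdot h = \tau_j(a_j) \cdot \bigl( \tau_i(b) \cdot h \bigr) = \tau_j(a_j) \cdot h,
\]
which is exactly the assertion that the $j$-th component of $a \cdot \imath(h)$ equals that of $\imath(h)$; as $j$ was arbitrary, $\imath(h)$ is $A$-fixed.

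Finally I would check that $p_i$ and $\imath$ are mutually inverse on the relevant fixed-point schemes. The composite $p_i \circ \imath = \id_{\bH^{A_i}}$ is immediate since $a_i = e$. Conversely, if $(h_j)_{j \in I} \in (\bH^{\times_S I})^A(Y)$, then applying the fixed-point equation with $a = a_j$ yields $h_j = \tau_j(a_j) \cdot h_{\sigma(a_j)^{-1}(j)} = \tau_j(a_j) \cdot h_i$, i.e.\ $(h_j)_{j \in I} = \imath(h_i)$, so $\imath \circ p_i = \id$ on $(\bH^{\times_S I})^A$. Since $p_i$ is a homomorphism of group schemes, the resulting isomorphism $(\bH^{\times_S I})^A \simto \bH^{A_i}$ is one of group schemes. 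The main obstacle here is nothing deep: it is the bookkeeping with the wreath-product cocycle in the displayed computation, together with pinning down the ``component automorphism'' description of $\mathrm{Aut}$ of $\bH^{\times_S I}$ with internally consistent conventions for $\sigma$ and the $\tau_j$.
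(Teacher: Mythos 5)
Your proof is correct and follows essentially the same route as the paper: the paper also constructs the inverse by choosing coset representatives $a_j$ with $\sigma(a_j)(i)=j$ and sending $h \mapsto \prod_j (a_j \cdot h)$, leaving the verification as "easily checked." Your wreath-product cocycle computation simply makes that verification explicit, which is a fine (indeed more complete) write-up of the same argument.
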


\begin{proof}
The inverse isomorphism is constructed as follows. By assumption the action of $A$ on $I$ induces a bijection $A/A_i \simto I$. Choose, for any $j \in I$, an element $a_j \in A$ such that $\sigma(a_j)(i)=j$. Then the action of $a_j$ on $\bH^{\times_S I}$ identifies the copy of $\bH$ indexed by $i$ with that indexed by $j$. It is easily checked that the assignment
\[
h \mapsto \prod_j (a_j \cdot h)
\]
induces a morphism $\bH^{A_i} \to \bigl( \bH^{\times_S I} \bigr)^A$ which is inverse to the morphism of the statement.
\end{proof}

Finally, we have a counterpart of~\eqref{eqn:root-subgp-type-i-torus}:
\begin{equation}\label{eqn:root-subgp-type-ii-torus}
\imath_E\begin{pmatrix}
a & 0 & 0 \\
0 & 1 & 0 \\
0 & 0 & a^{-1}
\end{pmatrix} = \prod_{\substack{\alpha \in E \\ \text{$\alpha$ special}}} \alpha^\vee(a).
\end{equation}

\section{Flatness and smoothness}
\label{sec:flatness-smoothness}

\subsection{Statement}

We 
continue with our pinned reductive group scheme~\eqref{eqn:pinned-gp}
over $S$, and our group $A$ which acts on $\bG$ by pinned automorphisms.
Let $M_A$ denote the coinvariants for the induced action of $A$ on $M$ (see~\S\ref{ss:ex-diag-gps}). Recall also that $A$ acts on $\fR$, and permutes its indecomposable components.

The following statement is the main result of this paper. 

\begin{thm}
\phantomsection
\label{thm:fixed-pts}
 \begin{enumerate}
  \item 
  \label{it:fixed-pts-1}
  The group scheme $\bG^A$ is flat over $S$.
  \item 
  \label{it:fixed-pts-2}
  The group scheme $\bG^A$ has geometrically connected fibers over $S$ iff either $M_A$ is torsion-free or 
  $S$ has exactly one residual characteristic $\ell>0$ and the torsion subgroup of $M_A$ is an $\ell$-group.
  \item 
  \label{it:fixed-pts-3}
  The group scheme $\bG^A$ is smooth over $S$ iff the following conditions hold:
  \begin{itemize}
  \item the order of the torsion subgroup of $M_A$ is coprime to all residual characteristics of $S$;
  \item if $\Roots$ has an indecomposable component of type $\mathsf{A}_{2n}$ for some $n \geq 1$ whose stabilizer in $A$ acts nontrivially on this component, then $2$ is not a residual characteristic of $S$.
  \end{itemize}
  \item 
  \label{it:fixed-pts-5}
  If $S$ is the spectrum of a mixed characteristic
  DVR and $M_A$ is torsion-free, then $\bG^{A}$ is a quasi-reductive $S$-group scheme in the sense of Prasad--Yu~\cite{py}.
 \end{enumerate}
\end{thm}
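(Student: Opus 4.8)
The plan is to check, one after the other, the conditions in the definition of a quasi-reductive group scheme over a DVR from~\cite{py}. Write $S=\Spec(\scO)$ with $\scO$ our mixed-characteristic DVR, $K$ its fraction field (so $\mathrm{char}(K)=0$) and $\kappa$ its residue field (so $\mathrm{char}(\kappa)=p>0$). Concretely, one must verify that (a) $\bG^A$ is affine, flat and of finite type over $S$; (b) the generic fibre $(\bG^A)_K$ is a connected reductive $K$-group; and (c) the reductive rank of the special fibre $(\bG^A)_\kappa$ coincides with the reductive rank of $(\bG^A)_K$. The substantive content lies in (b) and (c), and both will be deduced from the analysis of fixed points over fields already available in the paper.

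Condition (a) is immediate: $\bG^A$ is a closed subscheme of the affine, finite-type $S$-scheme $\bG$ (hence is itself affine and of finite type over $S$), and it is flat over $S$ by part~\eqref{it:fixed-pts-1}. For (b), base change for fixed points~\eqref{eqn:fixed-pts-bc} gives $(\bG^A)_K = (\bG_K)^A$; since $M_A$ is torsion-free, part~\eqref{it:fixed-pts-2} shows this fibre is geometrically connected, and since $\mathrm{char}(K)=0$ the finite-type $K$-group scheme $(\bG^A)_K$ is automatically smooth (Cartier), so it coincides with $\bigl((\bG^A)_K\bigr)^\circ_{\mathrm{red}}$, which is split reductive by Theorem~\ref{thm:main-intro}\eqref{it:main-intro-4}. (Alternatively, smoothness over $K$ follows directly from part~\eqref{it:fixed-pts-3}, both of whose numerical conditions are vacuous over a field of characteristic $0$.) Hence $(\bG^A)_K$ is connected reductive.

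It remains to treat (c). By Lemma~\ref{lem:torus_fix_pts_diag}, $\bT^A = \Diag_S(M_A)$ is a closed subgroup scheme of $\bG^A$, and since $M_A$ is torsion-free it is an $S$-torus of relative dimension $r:=\mathrm{rk}_\Z(M_A)$; in particular each geometric fibre of $\bG^A$ has reductive rank $\geq r$. For the reverse inequality over both $K$ and $\kappa$, one applies Theorem~\ref{thm:main-intro}\eqref{it:main-intro-4} over each of these fields — using also its compatibility with passage to an algebraic closure — together with the explicit description of the fixed-point groups over algebraically closed fields from~\S\ref{ss:fixed-pts-alg-closed-fields}, which identifies $\Diag(M_A)$ as a maximal torus of $\bigl((\bG^A)_{\overline{K}}\bigr)^\circ$ and of $\bigl((\bG^A)_{\overline{\kappa}}\bigr)^\circ_{\mathrm{red}}$. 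Since a maximal torus of a finite-type group scheme over an algebraically closed field lies in the reduced neutral component, and since reductive rank is unchanged under base change to an algebraic closure, it follows that the reductive rank of $(\bG^A)_K$ and that of $(\bG^A)_\kappa$ are both equal to $r$ — and in fact $\bigl((\bG^A)_{\overline{\kappa}}\bigr)^\circ_{\mathrm{red}}$ is split reductive of rank $r$, which also takes care of any pseudo-reductivity-type clause in the definition of~\cite{py}. Feeding (a), (b) and this into that definition shows that $\bG^A$ is quasi-reductive.

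The main obstacle is not in this argument itself but in its inputs: the identification, in every residue characteristic, of the reduced neutral component of the (geometric) special fibre as a split reductive group with maximal torus $\Diag(M_A)$. That is exactly what Theorem~\ref{thm:main-intro}\eqref{it:main-intro-4} and the structure theory of~\S\ref{ss:fixed-pts-alg-closed-fields} provide, so here only the bookkeeping with base change and the matching of ranks remains. It is worth emphasising that the statement is genuinely sharper than smoothness: for instance, when $S=\Spec(\Z_2)$ and $\bG=\mathrm{SL}_{2n+1,\Z_2}$ with the nontrivial action of $\Z/2\Z$ from~\S\ref{sss:action-SLodd}, the coinvariants $M_A$ are torsion-free, so $\bG^A$ is quasi-reductive, yet it is not smooth (hence not reductive) by part~\eqref{it:fixed-pts-3}.
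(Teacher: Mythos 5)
There is a genuine gap: the statement you were asked to prove is the whole of Theorem~\ref{thm:fixed-pts}, but your argument only addresses part~\eqref{it:fixed-pts-5}, and it does so by \emph{citing} parts~\eqref{it:fixed-pts-1}, \eqref{it:fixed-pts-2} and~\eqref{it:fixed-pts-3} as known inputs (``flat by part~(1)'', ``geometrically connected by part~(2)'', ``smooth over $K$ by part~(3)''). Those three parts are the substantive content of the theorem and of the paper: flatness in particular is not formal, and the paper proves it by analysing the multiplication morphism $f : \bC^A \times_S \bC^A \to \bG^A$, showing it is surjective on geometric fibers via the density of the big cell $\bC^A \subset \bG^A$ over algebraically closed fields (Proposition~\ref{prop:fix-pts-fields}\eqref{it:fields-C-dense}), flat on each fiber, and then invoking the fiberwise criterion for flatness together with the flatness of $\bC^A$ itself (Lemma~\ref{lem:flatness-CA}, which rests on the twisted $\mathrm{SL}_2$-maps and the computation of $(\mathrm{U}_{3})^{\Z/2\Z}$). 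None of this appears in your proposal, so as a proof of the theorem it is incomplete; and note also that Theorem~\ref{thm:main-intro}\eqref{it:main-intro-4}, which you invoke, is itself deduced in the paper (Proposition~\ref{prop:fixed-pts-fields-2}) from the faithful flatness of $f$, i.e.\ from part~\eqref{it:fixed-pts-1}, so it cannot be used as an independent input here.

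For part~\eqref{it:fixed-pts-5} taken in isolation, your verification is essentially the paper's: affineness, flatness, generic fiber smooth and connected via~\eqref{it:fixed-pts-2}--\eqref{it:fixed-pts-3}, and reductivity of the reduced identity component of the geometric special fiber via the algebraically-closed-field analysis (the paper quotes Proposition~\ref{prop:fix-pts-fields}\eqref{it:fields-1}, which is the cleaner reference than Theorem~\ref{thm:main-intro}\eqref{it:main-intro-4} for the reasons above). One small inaccuracy: the auxiliary clause in the Prasad--Yu definition is a comparison of \emph{dimensions}, not of reductive ranks; the paper observes it is automatic for flat group schemes of finite type, whereas your rank-matching argument via $\bT^A=\Diag_S(M_A)$, while correct and even informative, does not literally verify that clause. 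So the right fix is not to adjust the part~\eqref{it:fixed-pts-5} bookkeeping but to supply proofs of \eqref{it:fixed-pts-1}--\eqref{it:fixed-pts-3} along the lines of \S\ref{ss:big-cell}--\ref{ss:fixed-pts-alg-closed-fields}.
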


\subsection{Fixed points in the big cell}
\label{ss:big-cell}

The main step in our proof of Theorem~\ref{thm:fixed-pts} will be the study of the fixed points of $A$ on the ``big cell'' in $\bG$. Recall the subgroups $\bU$, $\bU^-$ introduced in~\S\ref{ss:root-subgps}.
For any $\alpha \in \Roots$ and $a \in A$, the action of $a$ on $\bG$ induces an isomorphism
\begin{equation*}
\bU_\alpha \simto \bU_{a \cdot \alpha}.
\end{equation*}
By the independence of $\bU$ on the choice of order on $\Roots_+$, this implies that this subgroup is stable under the action of $A$. Similar comments apply to $\bU^-$.

By~\cite[Exp.~XXII, Proposition~4.1.2]{sga33}, the multiplication morphism
\[
\bU^- \times_S \bT \times_S \bU \to \bG
\]
is an open immersion. Its image (called the \emph{big cell}) is denoted $\bC \subset \bG$. 
This open subscheme is stable under the action of $A$, and using Lemma~\ref{lem:fixed-pts-properties}\eqref{it:fixed-pts-product} we see that multiplication induces an isomorphism
\begin{equation}
\label{eqn:decomp-CA}
(\bU^-)^A \times_S \bT^A \times_S \bU^A \simto \bC^A.
\end{equation}
Moreover, by Lemma~\ref{lem:fixed-pts-properties}\eqref{it:fixed-pts-immersion} the morphism $\bC^A \to \bG^A$ induced by the open immersion $\bC \to \bG$ is itself an open immersion.

\begin{lem}
\phantomsection
\label{lem:flatness-CA}
\begin{enumerate}
\item
\label{it:flatness-CA-1}
The scheme $\bT^A$ is flat over $S$. It is smooth over $S$ iff the order of the torsion subgroup of $M_A$ is prime to all residual characteristics of $S$.
\item
\label{it:flatness-CA-2}
The scheme $\bU^A$ is flat over $S$. It is smooth over $S$ iff it satisfies the following condition: if $\Roots$ has an indecomposable component of type $\mathsf{A}_{2n}$ for some $n \geq 1$ whose stabilizer in $A$ acts nontrivially on this component, then $2$ is invertible on $S$.
\item
\label{it:flatness-CA-3}
The scheme $\bC^A$ is flat over $S$. It is smooth over $S$ iff the conditions in Theorem~\ref{thm:fixed-pts}\eqref{it:fixed-pts-3} hold.
\end{enumerate}
\end{lem}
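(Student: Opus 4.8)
The plan is to treat the three parts in order. Part~\eqref{it:flatness-CA-1} is almost immediate: since $\bT=\Diag_S(M)$, Lemma~\ref{lem:torus_fix_pts_diag} identifies $\bT^A$ with $\Diag_S(M_A)$, and then the discussion in~\S\ref{ss:ex-diag-gps} (noting that $M$, hence $M_A$, is of finite type) gives at once that $\bT^A$ is flat over $S$, and smooth over $S$ if and only if the order of the torsion subgroup of $M_A$ is prime to all residual characteristics of $S$. Part~\eqref{it:flatness-CA-3} will then be a formal consequence of parts~\eqref{it:flatness-CA-1} and~\eqref{it:flatness-CA-2} via the decomposition~\eqref{eqn:decomp-CA}, so the real content is part~\eqref{it:flatness-CA-2}.

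For part~\eqref{it:flatness-CA-2}, first I would choose an order on $\Roots_+$ in which every $\sim$-equivalence class forms a consecutive block (possible since the classes partition $\Roots_+$), so that the order-independent isomorphism $\prod_{\alpha\in\Roots_+}\bU_\alpha\simto\bU$ of~\S\ref{ss:root-subgps}, after grouping the factors belonging to each class, becomes an isomorphism of $S$-schemes $\prod_E\bU_E\simto\bU$ (using also the order-independence of each $\bU_E$, cf.~\S\ref{sss:SL2maps-i}--\S\ref{sss:SL2maps-ii}, and that each $\bU_E\to\bU$ is a closed immersion onto $\bU_E$). Since each $\bU_E$ is $A$-stable, this is $A$-equivariant for the diagonal action on the right-hand side, so iterating Lemma~\ref{lem:fixed-pts-properties}\eqref{it:fixed-pts-product} yields $\bU^A\simto\prod_E(\bU_E)^A$. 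Now~\eqref{eqn:root-subgp-type-i} identifies $(\bU_E)^A$ with $\mathrm{U}_{2,S}\cong\Ga$ when $E$ is of type (i$'$), which is smooth over $S$; and~\eqref{eqn:root-subgp-type-ii} identifies $(\bU_E)^A$ with $(\mathrm{U}_{3,S})^{\Z/2\Z}$ when $E$ is of type (ii$'$), which by the computations of~\S\ref{sss:sl3_action} is flat over $S$ and smooth over $S$ iff $2$ is invertible on $S$. I would conclude that $\bU^A$ is flat over $S$, and that it is smooth over $S$ iff, whenever an equivalence class of type (ii$'$) occurs, $2$ is invertible on $S$ — here using that a finite fiber product over $S$ of flat, finitely presented $S$-schemes with nonempty fibers is smooth over $S$ iff each factor is (smoothness being checked on geometric fibers, where a product of nonempty $\bk$-schemes is smooth iff each factor is). Finally, the classification of equivalence classes recalled in~\S\ref{ss:equiv-relation} shows that type-(ii$'$) classes occur exactly when $\Roots$ has an indecomposable component of type $\mathsf{A}_{2n}$ whose stabilizer in $A$ acts nontrivially, which gives the stated condition.

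For part~\eqref{it:flatness-CA-3} I would invoke~\eqref{eqn:decomp-CA}, which gives $\bC^A\simto(\bU^-)^A\times_S\bT^A\times_S\bU^A$. The argument of part~\eqref{it:flatness-CA-2} applies verbatim to $(\bU^-)^A$, working with $-\Roots_+$ in place of $\Roots_+$ and noting $(\mathrm{U}^-_{3,S})^{\Z/2\Z}\cong(\mathrm{U}_{3,S})^{\Z/2\Z}$ by transposition, so all three factors are flat over $S$ and hence $\bC^A$ is flat. For smoothness, the same product criterion shows $\bC^A$ is smooth over $S$ iff $\bT^A$, $\bU^A$ and $(\bU^-)^A$ are all smooth; since the conditions for $\bU^A$ and $(\bU^-)^A$ coincide, combining with part~\eqref{it:flatness-CA-1} this is precisely the pair of conditions in Theorem~\ref{thm:fixed-pts}\eqref{it:fixed-pts-3}.

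The main obstacle, such as it is, is justifying the scheme-theoretic decomposition $\bU\simto\prod_E\bU_E$ and the compatibility of $(-)^A$ with it; this rests on the order-independence statements for the root-subgroup factorization of $\bU$ and for the subgroups $\bU_E$, on the $A$-stability of each $\bU_E$, and on Lemma~\ref{lem:fixed-pts-properties}\eqref{it:fixed-pts-product}. The precise shape of $(\bU_E)^A$ in the type-(ii$'$) case — and hence the appearance of the prime $2$ — is where the twisted $\mathrm{SL}_2$-maps of~\S\ref{ss:SL2-maps} and the explicit $\mathrm{SL}_3$ computation of~\S\ref{sss:sl3_action} do the real work; everything else is bookkeeping.
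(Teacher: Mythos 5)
Your proposal is correct and follows essentially the same route as the paper: part~\eqref{it:flatness-CA-1} via Lemma~\ref{lem:torus_fix_pts_diag} and \S\ref{ss:ex-diag-gps}, part~\eqref{it:flatness-CA-2} via the decomposition of $\bU$ into the subgroups $\bU_E$ and the identifications \eqref{eqn:root-subgp-type-i}, \eqref{eqn:root-subgp-type-ii} with $\Ga$ and $(\mathrm{U}_{3,S})^{\Z/2\Z}$, and part~\eqref{it:flatness-CA-3} via \eqref{eqn:decomp-CA}. The only (harmless) cosmetic differences are that you deduce the converse smoothness statement from a general ``product over $S$ is smooth iff each nonempty-fibered flat factor is'' criterion, whereas the paper exhibits a nonreduced fiber directly, and you invoke Lemma~\ref{lem:fixed-pts-properties}\eqref{it:fixed-pts-product} where the paper cites the monomorphism part.
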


\begin{proof}
\eqref{it:flatness-CA-1} This follows from the discussion in~\S\ref{ss:ex-diag-gps}.

\eqref{it:flatness-CA-2}
Consider the equivalence relation $\sim$ on the set of positive roots $\Roots_+$ that was defined in~\S\ref{ss:equiv-relation} (for the given action of $A$ on $\Roots$). In~\S\ref{ss:SL2-maps}, we have associated to each equivalence class $E$ for $\sim$ a subgroup scheme $\bU_E \subset \bG$.
After choosing a numbering $E_1, \ldots, E_n$ of these equivalence classes,
the product morphism induces an isomorphism
\[
\bU_{E_1} \times_S \cdots \times_S \bU_{E_n} \simto \bU.
\]
 Using Lemma~\ref{lem:fixed-pts-properties}\eqref{it:fixed-pts-immersion} we deduce that multiplication induces an isomorphism
\begin{equation}
\label{eqn:ua-product}
(\bU_{E_1})^A \times_S \cdots \times_S (\bU_{E_n})^A \simto \bU^A.
\end{equation}
Here, the factors are described in~\eqref{eqn:root-subgp-type-i} or~\eqref{eqn:root-subgp-type-ii}.

This shows that $\bU^A$ is a product of factors which are either isomorphic to $\bG_{\mathrm{a},S}$ or to $(\mathrm{U}_{3,S})^{\Z/2\Z}$; moreover, 
this second case occurs iff $\Roots$ has an indecomposable component of type $\mathsf{A}_{2n}$ for some $n \geq 1$ whose stabilizer in $A$ acts nontrivially on this factor. Each of these factors is flat over $S$ (see~\S\ref{sss:sl3_action} for the second case) so $\bU^A$ is flat. If $2$ is not a residual characteristic of $S$ or if 
no factor $(\mathrm{U}_{3,S})^{\Z/2\Z}$ occurs then all of these factors are smooth over $S$ (see again~\S\ref{sss:sl3_action}), so that $\bU^A$ is also smooth over $S$. On the other hand, if $2$ is a residual characteristic of $S$ and a factor $(\mathrm{U}_{3,S})^{\Z/2\Z}$ occurs, then as in~\S\ref{sss:sl3_action} one sees that there exists $s \in S$ such that $\Spec(\kappa(s)) \times_S \bU^A$ is not reduced, so that $\bU^A$ is not smooth over $S$.

\eqref{it:flatness-CA-3} The conclusion of~\eqref{it:flatness-CA-2} of course also applies to $(\bU^-)^A$. Using~\eqref{eqn:decomp-CA} we deduce that $\bC^A$ is flat over $S$. If the conditions of Theorem~\ref{thm:fixed-pts}\eqref{it:fixed-pts-3} hold, then by~\eqref{it:flatness-CA-1} and~\eqref{it:flatness-CA-2} each of $\bT^A$, $\bU^A$ and $(\bU^-)^A$ is smooth over $S$, so that $\bC^A$ is smooth over $S$ by~\eqref{eqn:decomp-CA}. Conversely, if one of these conditions fails, then as in~\eqref{it:flatness-CA-2} one sees that there exists $s \in S$ such that $\Spec(\kappa(s)) \times_S \bC^A$ is not reduced, so that $\bC^A$ is not smooth over $S$.
\end{proof}

\begin{ex}
\label{ex:torus-sc-adj}
If $\bG$ is semisimple and either simply connected or of adjoint type, then $M_A$ is free since $M$ admits a basis permuted by $A$. (In the first case one can take the basis of fundamental weights, and in the second case the basis of simple roots.) Hence in these cases $\bT^A$ is a torus.
\end{ex}

\subsection{The case of algebraically closed fields}
\label{ss:fixed-pts-alg-closed-fields}

In this subsection we assume that $S=\Spec (\bk)$ is the spectrum of an algebraically closed field $\bk$. 
Below we prove in particular in this case that the reduced neutral component $(\bG^{A})^\circ_{\rm{red}}$ is a split reductive group.
As explained in the introduction this case was already treated by Adler--Lansky and Haines,
but for the reader's convenience we give a self-contained argument.
We simultaneously establish a number of facts about the structure theory of this reductive group, listed in Proposition~\ref{prop:fix-pts-fields} below, which will be useful for the further study in Section~\ref{sec:complements}.

In this statement, we denote by $\bN$ the normalizer of $\bT$ in $\bG$, and by $\bW$ the Weyl group of $(\bG,\bT)$, i.e.~the quotient $\bN/\bT$. (As explained in~\cite[Exp.~XII, \S 2]{sga3.2}, $\bN$ is a smooth subgroup scheme of $\bG$, and $\bW$ is a finite constant group scheme, see also \cite[Proposition~5.1.6]{conrad}. For simplicity, we will not distinguish $\bW$ from $\bW(\bk)$.) Since $A$ stabilizes $\bT$, it also stabilizes $\bN$, and this action induces an action on $\bW$.

\begin{prop}
\label{prop:fix-pts-fields}
	The following assertions hold:
	\begin{enumerate}
		\item 
		\label{it:fields-1}
		$(\bG^A)^\circ_{\mathrm{red}}$ is a connected reductive group;
		\item 
		\label{it:fields-2}
		$(\bT^A)^\circ_{\mathrm{red}} \subset (\bG^A)^\circ_{\mathrm{red}}$ is a 
		maximal torus, and $(\bB^A)^\circ_{\mathrm{red}}$ is a Borel subgroup;
		\item 
		\label{it:fields-3}
		the positive roots of $((\bG^A)^\circ_{\mathrm{red}},(\bT^A)^\circ_{\mathrm{red}})$ determined by $(\bB^A)^\circ_{\mathrm{red}}$ are in natural bijection with the equivalence classes for the equivalence relation $\sim$ on $\Roots_+$ (see~\eqref{eqn:roots-equiv});
		\item 
		\label{it:fields-4}
		for any equivalence class $E$, seen as a positive root for $((\bG^A)^\circ_{\mathrm{red}},(\bT^A)^\circ_{\mathrm{red}})$, the corresponding root subgroup of $(\bG^A)^\circ_{\mathrm{red}}$ is $((\bU_E)^A)_{\mathrm{red}}$;
		\item 
		\label{it:fields-Bruhat-dec}
		the natural maps provide bijections
		\[
		\bN^A(\bk)/\bT^A(\bk) \cong \bW^A \cong \bB^A(\bk)\backslash \bG^A(\bk)/\bB^A(\bk);
		\] 
		\item 
		\label{it:fields-C-dense}
		the open subscheme $\bC^A \subset \bG^A$ is dense;
		\item
		\label{it:fields-conn-components}
		the embedding $\bT^A \to \bG^A$ induces an isomorphism between the groups of connected components of these group schemes;
		\item
		\label{it:fields-Weyl-gp}
		the Weyl group of $(\bG^A)^\circ_{\mathrm{red}}$ with respect to $(\bT^A)^\circ_{\mathrm{red}}$ identifies canonically with $\bW^A$.
	\end{enumerate}
\end{prop}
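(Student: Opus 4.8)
The plan is to bootstrap everything from the analysis of the big cell in \S\ref{ss:big-cell} together with Lemma~\ref{lem:flatness-CA}, the twisted $\mathrm{SL}_2$-maps of \S\ref{ss:SL2-maps}, and Lemma~\ref{lem:roots-coinvariants}. I would begin by putting $G := (\bG^A)^\circ_{\mathrm{red}}$, $T := (\bT^A)^\circ_{\mathrm{red}}$, $B := (\bB^A)^\circ_{\mathrm{red}}$, $U := (\bU^A)_{\mathrm{red}}$, $U^- := ((\bU^-)^A)_{\mathrm{red}}$. Since $\bk$ is algebraically closed, each of these is a reduced affine $\bk$-group, hence smooth, and $T$ is a torus by Lemma~\ref{lem:flatness-CA}\eqref{it:flatness-CA-1} (a reduced diagonalizable group over an algebraically closed field is a torus). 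The decomposition~\eqref{eqn:decomp-CA} and the product decomposition~\eqref{eqn:ua-product}, combined with the fact that $(\mathrm{U}_{3,\bk})^{\Z/2\Z}_{\mathrm{red}}$ is $\Ga$ by~\eqref{eqn:u3-fixed-field}, show that $U$ and $U^-$ are (as varieties) affine spaces, and the twisted $\mathrm{SL}_2$-maps $\imath_E$ of \S\ref{sss:SL2maps-i}--\S\ref{sss:SL2maps-ii} equip $U$ and $U^-$ with the structure of unipotent groups normalized by $T$, with $T$ acting on the ``line'' corresponding to an equivalence class $E$ through a character whose differential (on the Lie-algebra level, via~\eqref{eqn:root-subgp-type-i-torus} and~\eqref{eqn:root-subgp-type-ii-torus}) is the image of $E$ in the coinvariants; these are pairwise non-proportional and nonzero by Lemma~\ref{lem:roots-coinvariants}. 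This gives assertions~\eqref{it:fields-2}, \eqref{it:fields-3}, \eqref{it:fields-4} once we know $(\bG^A)^\circ_{\mathrm{red}}$ is reductive.

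Next I would prove~\eqref{it:fields-1}. Because $\bC^A$ is an open subscheme of $\bG^A$ and (by~\eqref{eqn:decomp-CA}) contains $T$ and is stable under left and right translation by $T$, the subgroup of $(\bG^A)^\circ_{\mathrm{red}}$ generated by $T$, $U$, $U^-$ is open, hence (being a subgroup) closed, hence equals $(\bG^A)^\circ_{\mathrm{red}}$; this already yields~\eqref{it:fields-C-dense}. To see $G$ is reductive it suffices to show its unipotent radical is trivial: the unipotent radical, being normalized by $T$, must be a product of some of the root lines for the $T$-action, i.e.\ would contain some $\imath_E(\mathrm{U}_{2,\bk})$ (resp.\ $\imath_E((\mathrm{U}_{3,\bk})^{\Z/2\Z}_{\mathrm{red}})$) as a normal subgroup; but $\imath_E$ extends to a homomorphism from $\mathrm{SL}_{2,\bk}$ (resp.\ $\mathrm{PGL}_{2,\bk}$, using~\eqref{eqn:SL3-fixed-pts} and its $\F_2$-variant~\eqref{eqn:SL3-fixed-pts-2}) whose image is generated by positive and negative root lines and has trivial intersection with the radical, contradicting normality unless no such $E$ exists. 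Thus $R_u(G) = 1$ and $G$ is reductive, with maximal torus $T$ (its centralizer is contained in $\bC^A$, which retracts onto $T \cdot$(unipotent part) and one checks the centralizer is just $T$) and root system indexed by the equivalence classes.

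For the Bruhat-type statement~\eqref{it:fields-Bruhat-dec} I would argue that $\bN^A := (\bN \cap \bG^A)$ normalizes $\bT^A$ hence $T$, giving a map $\bN^A(\bk)/\bT^A(\bk) \to \bW^A$; surjectivity comes from the fact that each $A$-fixed reflection $s_E$ (for an equivalence class $E$) is realized by $\imath_E\bigl(\begin{smallmatrix}0&1\\-1&0\end{smallmatrix}\bigr)$ (resp.\ the analogous element for $\mathrm{PGL}_2$), and these generate $\bW^A$ since $\bW^A$ is the Weyl group of the root system $\{E\}$ (here one invokes Steinberg's result, quoted in \S\ref{ss:equiv-relation}, that $\bW^A$ is a Coxeter group on the $s_E$; injectivity is the standard fact that $\bN/\bT \cap \text{stab of a Borel} $ is trivial, applied inside $G$). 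The composite with the Bruhat decomposition of the reductive group $G$ then gives the second bijection. Assertion~\eqref{it:fields-conn-components} follows from~\eqref{eqn:decomp-CA}: $\bG^A \supset \bC^A \cong (\bU^-)^A \times \bT^A \times \bU^A$ with the two unipotent factors connected, so $\pi_0(\bC^A) = \pi_0(\bT^A)$, and density of $\bC^A$ together with the group structure (every connected component meets $\bC^A \cdot g$ for suitable $g \in \bG^A(\bk)$) forces $\pi_0(\bG^A) = \pi_0(\bT^A)$; finally~\eqref{it:fields-Weyl-gp} is immediate from~\eqref{it:fields-2}, \eqref{it:fields-3} and the identification of $\bW^A$ with the Weyl group of $\{E\}$.

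\textbf{Main obstacle.} The delicate point is not any single assertion but the interface between the \emph{scheme} $\bG^A$ (which may be nonreduced, e.g.\ over $\F_2$ in the $\mathsf{A}_{2n}$ case) and the reductive \emph{group} $G = (\bG^A)^\circ_{\mathrm{red}}$: I must check that passing to $(-)_{\mathrm{red}}$ is compatible with the product decomposition~\eqref{eqn:decomp-CA} (it is, over a field) and that the twisted $\mathrm{SL}_2$-maps land in the reduced subscheme with the expected image, which is exactly where the explicit computations~\eqref{eqn:u3-fixed-field}, \eqref{eqn:SL3-fixed-pts}, \eqref{eqn:SL3-fixed-pts-2} and Example~\ref{ex:sl_odd} are needed. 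The second subtle point is verifying that the centralizer of $T$ in $G$ is exactly $T$ (rather than a larger group), for which one uses that $\bC^A$ is the big cell and that a $T$-fixed point of $U$ or $U^-$ must be the identity since all the relevant characters are nonzero by Lemma~\ref{lem:roots-coinvariants}.
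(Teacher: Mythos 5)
The serious gap is in your treatment of assertions \eqref{it:fields-Bruhat-dec}--\eqref{it:fields-Weyl-gp}, i.e.\ everything that concerns the full scheme $\bG^A$ rather than its identity component. Your generation argument only shows that $U^- T U$ is open, hence dense, in the irreducible group $G=(\bG^A)^\circ_{\mathrm{red}}$; but \eqref{it:fields-C-dense} asserts density of $\bC^A$ in all of $\bG^A$, which may have several connected components, and nothing in your argument shows that $\bC^A$ meets the non-identity components. Your derivation of \eqref{it:fields-conn-components} is then vacuous: the translate $\bC^A\cdot g$ always contains $g$, so the observation that each component meets some translate gives no information, and surjectivity of $\pi_0(\bT^A)\to\pi_0(\bG^A)$ is exactly the nontrivial point. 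Likewise the Bruhat decomposition of the reductive group $G$ only describes $B(\bk)\backslash G(\bk)/B(\bk)$, not $\bB^A(\bk)\backslash\bG^A(\bk)/\bB^A(\bk)$; and for \eqref{it:fields-Weyl-gp} you invoke the identification of $\bW^A$ with the Weyl group of the restricted root system, which in this paper is deduced \emph{from} \eqref{it:fields-Weyl-gp} (Proposition~\ref{prop:root.syst.fix.pts}), so as written this is circular, and in any case injectivity of $W(G,T)\to\bW$ and the choice of lifts inside the identity component are not addressed. The missing device, which the paper's proof supplies, is a Bruhat decomposition of $\bG^A$ itself: since $A$ permutes the cells $\bB w\bB$, one has $\bG^A\cap\bB w\bB=\emptyset$ unless $w\in\bW^A$, and combining the surjectivity of $\bN^A(\bk)\to\bW^A$ (via \cite{hee}) with the standard parametrization of cells gives $\bG^A=\bigsqcup_{w\in\bW^A}\bB^A\dot w\bB^A$; density of the top cell (hence \eqref{it:fields-C-dense}) then follows because all irreducible components of the group scheme $\bG^A$ have the same dimension, and \eqref{it:fields-Bruhat-dec}, \eqref{it:fields-conn-components}, \eqref{it:fields-Weyl-gp} follow from this decomposition. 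Without some substitute for this step, your argument says nothing about the non-identity components.

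Your reductivity argument for \eqref{it:fields-1} is a genuinely different route from the paper's (the paper works in the quotient $\mathbf{R}=G/R_{\mathrm{u}}(G)$, shows each $(\bU_E)^A_{\mathrm{red}}$ maps to $\mathbf{R}$ with infinitesimal kernel and that the weights $\bar\alpha_E$ stay independent, and concludes by the dimension count $\dim G=\dim T+2N\leq\dim\mathbf{R}$), but as written it has two unproved assertions. First, a nontrivial $T$-stable connected unipotent normal subgroup need only contain some $T$-stable copy of $\Ga$ whose weight is one of the $\pm\bar\alpha_E$; in positive characteristic such a copy need not be the root subgroup $\imath_E(\mathrm{U}_{2,\bk})$ itself (graphs of Frobenius twists give $T$-stable copies of $\Ga$ with the same Lie algebra), so ``$R_{\mathrm{u}}(G)$ would contain some $\imath_E(\mathrm{U}_{2,\bk})$'' needs justification. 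Second, ``the image of the extended homomorphism has trivial intersection with the radical'' is precisely what must be proved; the correct argument is that if the radical contained the positive root subgroup, then the kernel of $\mathrm{SL}_{2,\bk}\to G/R_{\mathrm{u}}(G)$ would be a normal subgroup containing $\mathrm{U}_{2,\bk}$, hence all of $\mathrm{SL}_{2,\bk}$, contradicting the nontriviality of the cocharacter coming from \eqref{eqn:root-subgp-type-i-torus}--\eqref{eqn:root-subgp-type-ii-torus} together with the fact that $(\bT^A)^\circ_{\mathrm{red}}$ embeds into the quotient. Either supply these two points or adopt the paper's quotient-plus-dimension-count argument, which avoids locating root subgroups inside the unipotent radical altogether.
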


\begin{proof} 
Let $\mathbf{K}$ be the unipotent radical of $(\bG^A)_{\mathrm{red}}^\circ$, and set $\mathbf{R} := (\bG^A)_{\mathrm{red}}^\circ / \mathbf{K}$ which is a connected reductive group over $\bk$.
Let $q: (\bG^A)_{\mathrm{red}}^\circ \to \mathbf{R}$ be the quotient map.

\textit{Step 1. The restriction of $q$ to a map $(\bT^A)^\circ_{\mathrm{red}} \to \mathbf{R}$ is a closed immersion.  In particular, $\mathbf{R}$ contains a torus of dimension $\dim(\bT^A)$.}  
This follows from the observation that the subgroup  $(\bT^A)^\circ_{\mathrm{red}}=\Diag_{\bk}(M_A / (M_A)_{\mathrm{tor}})$ is a torus.
It therefore has trivial intersection with the unipotent group $\mathbf{K}$.

\textit{Step 2.  For each equivalence class $E$, there is a map
\[
\varphi_E: \mathrm{SL}_{2,\bk} \to \bG^A
\]
whose restriction to the subgroup $\mathrm{U}_{2,\bk} \subset \mathrm{SL}_{2,\bk}$ of upper triangular unipotent matrices induces an isomorphism
\[
\mathrm{U}_{2,\bk} \simto (\bU_E)^A_{\mathrm{red}}.
\]}
If $E$ is of type (i$'$), set $\varphi_E = \imath_E$.  We have seen in~\eqref{eqn:root-subgp-type-i} that this map behaves as desired on $\mathrm{U}_{2,\bk}$.  (In this case, the scheme $(\bU_E)^A$ is already reduced.)  If $E$ is of type (ii$'$), consider the sequences of maps
\begin{align*}
&\mathrm{U}_{2,\bk} \hookrightarrow \mathrm{SL}_{2,\bk} \twoheadrightarrow \mathrm{PGL}_{2,\bk} \xrightarrow{\eqref{eqn:SL3-fixed-pts}} (\mathrm{SL}_{3,\bk})^{\Z/2\Z} \xrightarrow{\imath_E} \bG^A &&\text{if $\mathrm{char}(\bk) \ne 2$;}  \\
&\mathrm{U}_{2,\bk} \hookrightarrow \mathrm{SL}_{2,\bk} \xrightarrow{\eqref{eqn:SL3-fixed-pts-2}} (\mathrm{SL}_{3,\bk})^{\Z/2\Z} \xrightarrow{\imath_E} \bG^A &&\text{if $\mathrm{char}(\bk) = 2$.} 
\end{align*}
We have seen in ~\eqref{eqn:root-subgp-type-ii} that $\imath_E$ restricts to an isomorphism $(\mathrm{U}_{3,\bk})^{\Z/2\Z} \simto (\bU_E)^A$, so the claim follows from the observation that either~\eqref{eqn:SL3-fixed-pts} or~\eqref{eqn:SL3-fixed-pts-2} (depending on the characteristic of $\bk$) induces an isomorphism $\mathrm{U}_{2,\bk} \simto (\mathrm{U}_{3,\bk})^{\Z/2\Z}_{\mathrm{red}}$.  
(Recall that the scheme $(\mathrm{U}_{3,\bk})^{\Z/2\Z}$ is reduced if and only if $\mathrm{char}(\bk) \ne 2$.)

\textit{Step 3.  The map $\gamma_E^\vee: \mathbb{G}_{\mathrm{m,\bk}} \to \bG^A$ given by
\[
\gamma_E^\vee(a) = \varphi_E \begin{pmatrix} a & 0 \\ 0 & a^{-1} \end{pmatrix}
\]
is a nontrivial cocharacter of $(\bT^A)^\circ_{\mathrm{red}}$.}
The fact that $\gamma_E^\vee$ factors through $(\bT^A)^\circ_{\mathrm{red}}$ follows from the fact that $\mathbb{G}_{\mathrm{m,\bk}}$ is reduced and connected.
To check that $\gamma_E^\vee$ is nonzero, we simply remark that in $M^\vee$,  by~\eqref{eqn:root-subgp-type-i-torus} and~\eqref{eqn:root-subgp-type-ii-torus} (combined with~\eqref{eqn:SL3-fixed-pts} and~\eqref{eqn:SL3-fixed-pts-2} in the latter case), we have
\[
\gamma_E^\vee = 
\begin{cases}
\sum_{\alpha \in E} \alpha^\vee &\text{if $E$ is of type (i$'$),} \\
2\sum_{\alpha \in E,\text{ $\alpha$ special}} \alpha^\vee &\text{if $E$ is of type (ii$'$) and $\mathrm{char}(\bk) \ne 2$,} \\
\sum_{\alpha \in E,\text{ $\alpha$ special}} \alpha^\vee &\text{if $E$ is of type (ii$'$) and $\mathrm{char}(\bk) = 2$.}
\end{cases}
\]
For future reference, we rewrite this formula as follows, using the observation that in type (ii$'$), the sum of all nonspecial roots is equal to the sum of all special roots:
\begin{equation}
\label{eqn:coroot-formula-pre}
\gamma_E^\vee = 
\begin{cases}
\sum_{\alpha \in E} \alpha^\vee &\text{if $E$ is of type (i$'$),} \\
& \qquad\text{or if $E$ is of type (ii$'$) and $\mathrm{char}(\bk) \ne 2$,} \\
\sum_{\alpha \in E,\text{ $\alpha$ special}} \alpha^\vee &\text{if $E$ is of type (ii$'$) and $\mathrm{char}(\bk) = 2$.}
\end{cases}
\end{equation}

\textit{Step 4. For each equivalence class $E$, let $\bar\alpha_E$ be the weight by which the torus $(\bT^A)^\circ_{\mathrm{red}}$ acts on $\Lie((\bU_E)^A_{\mathrm{red}})$.  Then each $\bar\alpha_E$ is nonzero.}
Since $\Lie((\bU_E)^A_{\mathrm{red}})$ is a subspace of $\Lie(\bU_E)$, it is enough to check that $(\bT^A)^\circ_{\mathrm{red}}$ has no nonzero fixed vectors in $\Lie(\bU_E)$.  Consider the cocharacter $\gamma_E^\vee$ from Step~3.  The claim follows from the fact that $\langle \alpha, \gamma_E^\vee \rangle > 0$ for all $\alpha \in E$.  More precisely, a straightforward computation (by reducing to the quasi-simple case) shows that
\begin{align*}
\langle \alpha, \gamma_E^\vee \rangle &= 2\quad\text{for all $\alpha \in E$} &&\text{if $E$ is of type (i$'$),} \\
\langle \alpha, \gamma_E^\vee \rangle &=
\begin{cases}
1 & \text{if $\alpha \in E$ is nonspecial and $\mathrm{char}(\bk) = 2$,} \\
2 & \text{if $\alpha \in E$ is nonspecial and $\mathrm{char}(\bk) \ne 2$,} \\
2 & \text{if $\alpha \in E$ is special and $\mathrm{char}(\bk) = 2$,} \\
4 & \text{if $\alpha \in E$ is special and $\mathrm{char}(\bk) \ne 2$,}
\end{cases}
&&\text{and $E$ is of type (ii$'$).}
\end{align*}

\textit{Step 5.  For each equivalence class $E$ for $\sim$, the morphism $(\bU_{E})^A_{\mathrm{red}} \to \mathbf{R}$ obtained by restricting $q$ is finite, with kernel the $r$-th infinitesimal neighborhood of the unit for some $r \geq 1$.} 
The kernel of the morphism under consideration is a subgroup scheme of $(\bU_E)^A_{\mathrm{red}} \cong \mathbb{G}_{\mathrm{a},\bk}$, which is stable under the action of $(\bT^A)^\circ_{\mathrm{red}}$ by conjugation; if it is not an infinitesimal neighborhood of the unit then it is
all of $(\bU_{E})^A_{\mathrm{red}}$.  
By Step~2, it follows that the composition
\[
\mathrm{U}_{2,\bk} \hookrightarrow \mathrm{SL}_{2,\bk} \xrightarrow{\varphi_{E}} (\bT^A)^\circ_{\mathrm{red}}\subset (\bG^A)^\circ_{\mathrm{red}} \to \mathbf{R}
\]
is trivial.  
The kernel of $\mathrm{SL}_{2,\bk} \to \mathbf{R}$ is therefore a normal subgroup scheme of $\mathrm{SL}_{2,\bk}$ that contains $\mathrm{U}_{2,\bk}$.  
The only such subgroup is all of $\mathrm{SL}_{2,\bk}$: so the map $\mathrm{SL}_{2,\bk} \to \mathbf{R}$ is trivial.  
But Steps~1 and~3 together imply that this map is nontrivial on $\{(\begin{smallmatrix} a & 0 \\ 0 & a^{-1} \end{smallmatrix})\} \subset \mathrm{SL}_{2,\bk}$, a contradiction.  

\textit{Step 6. If $E_1$ and $E_2$ are distinct equivalence classes, then $\bar\alpha_{E_1} $ and $\bar\alpha_{E_2}$ are linearly independent.}  Consider the morphism $\bG_\sconn \to \bG$ as in~\S\ref{ss:reduction-scqs}, and the maximal torus $\bT_\sconn$ of $\bG_\sconn$. As explained in Example~\ref{ex:torus-sc-adj}, $(\bT_\sconn)^A$ is reduced and connected. The morphism $\bG_\sconn \to \bG$ induces a morphism $(\bT_\sconn)^A \to (\bT^A)^\circ_{\mathrm{red}}$, hence a morphism relating characters of these tori. 
Explicitly, we have
\[
(\bT_\sconn)^A = \Diag_{\bk}((M_\sconn)_A), \quad (\bT^A)^\circ_{\mathrm{red}} = \Diag_{\bk}(M_A / (M_A)_{\mathrm{tor}})
\]
where $M_\sconn$ is as in~\S\ref{ss:reduction-scqs}, i.e.~$M_\sconn$ is the lattice of weights of the root system $\Roots$, and this morphism is given by the obvious morphism
\[
M_A / (M_A)_{\mathrm{tor}} \to (M_\sconn)_A.
\]
The desired claim therefore follows from Lemma~\ref{lem:roots-coinvariants}, using the embedding $(M_\sconn)_A \to \mathbb{R} \otimes_{\Z} (M_\sconn)_A = (\mathbb{R} \otimes_{\Z} M_\sconn)_A$.

\textit{Step 7.  Proof of parts~\eqref{it:fields-1}, \eqref{it:fields-2}, \eqref{it:fields-3}, and~\eqref{it:fields-4}.}  
All four of these assertions will follow from the fact that $q: (\bG^A)_{\mathrm{red}}^\circ \to \mathbf{R}$ is an isomorphism.
To prove this fact, it is enough to show the equality
\[
\dim((\bG^A)_{\mathrm{red}}^\circ) = \dim(\mathbf{R}).
\]
It is obvious that $\dim((\bG^A)_{\mathrm{red}}^\circ) \ge \dim(\mathbf{R})$; we need only prove the opposite inequality.  Let $N$ be the number of equivalence classes for $\sim$.  The analysis of $\bC^A$ in~\S\ref{ss:big-cell} shows that $\dim ((\bG^A)_{\mathrm{red}}^\circ) = \dim ((\bT^A)_{\mathrm{red}}^\circ) + 2N$.  On the other hand, using Steps~1, 5, and~6 and considering the action of $(\bT^A)_{\mathrm{red}}^\circ$ on the Lie algebra of $\mathbf{R}$ we see that $\dim (\mathbf{R}) \ge \dim ((\bT^A)_{\mathrm{red}}^\circ) + 2N$, so we are done. 

\textit{Step 8. The natural morphism
\begin{equation}
\label{eqn:NA-WA}
\bN^A(\bk)\to \bW^A
\end{equation}
is surjective.}
By~\cite[Corollaire~3.5]{hee}, $\bW^A$ admits a system of Coxeter generators in bijection with orbits of $A$ in $\Delta$. For any such orbit the construction of~\S\ref{ss:SL2-maps} provides a twisted $\mathrm{SL}_2$-map with domain either $\mathrm{SL}_{2,\bk}$ or $(\mathrm{SL}_{3,\bk})^{\Z/2\Z}$. In the first, resp.~second, case, the image of the matrix $\left( \begin{smallmatrix} 0 & 1 \\ -1 & 0 \end{smallmatrix} \right)$, resp.~$\left( \begin{smallmatrix} 0 & 0 & 1 \\ 0 & -1 & 0 \\ 1 & 0 & 0 \end{smallmatrix} \right)$, provides a representative of the corresponding reflection in $\bW^A$.

\textit{Step 9. Study of the Bruhat decomposition.}  
Recall the Bruhat decomposition
\[
\bG = \bigsqcup_{w \in \bW} \bU w \bB = \bigsqcup_{w \in \bW} \bB w \bB,
\]
where each $\bU w \bB = \bB w\bB$ is a locally closed subscheme of $\bG$.
For any $a \in A$, the action of $a$ induces an isomorphism
\[
\bU w \bB \simto \bU (a \cdot w) \bB = \bB (a \cdot w) \bB;
\]
it follows that $\bG^A \cap (\bU w \bB) = \bG^A \cap (\bB w \bB)$ is empty unless $w \in \bW^A$, so that we have a decomposition $\bG^A = \bigsqcup_{w \in \bW^A} (\bU w \bB)^A$.  By surjectivity of~\eqref{eqn:NA-WA} each $w \in \bW^A$ admits a lift $\dot{w} \in \bN^A(\bk)$. Using such a lift and the usual description of Bruhat cells (see e.g.~\cite[Equation~(1) in~\S II.13.2]{jantzen}) we deduce that for any $w \in \bW^A$ we have
\[
(\bU w \bB)^A = \bU^A \dot{w} \bB^A = \bB^A \dot{w} \bB^A.
\]
We conclude that there is a decomposition
\begin{equation}\label{eqn:fields-Bruhat-decomp}
\bG^A = \bigsqcup_{w \in \bW^A} \bU^A \dot{w} \bB^A = \bigsqcup_{w \in \bW^A} \bB^A \dot{w} \bB^A.
\end{equation}

Let us now note that for $w \in \bW^A$ and $E$ an equivalence class for $\sim$, we either have $w(E) \subset \Roots_+$ or $w(E) \subset -\Roots_+$.  In the latter case, we write $w(E) < 0$.  Using~\eqref{eqn:ua-product}, we obtain
\begin{equation}
\label{eqn:fields-Bruhat-affine}
(\bU w \bB)^A = \left(\prod_{\substack{\text{$E$ equiv. class for $\sim$} \\ w^{-1}(E) < 0}} (\bU_E)^A\right) \dot {w} \bB^A.
\end{equation}
It then follows that
\[
\dim(\bU^A \dot{w} \bB^A) = \# \{E \in (\Roots_+ / \sim) \, \mid w^{-1}(E)<0\} + \dim (\bB^A).
\]

\textit{Step 10. Proof of~\eqref{it:fields-Bruhat-dec}.}  
The first bijection follows from Step~8 and the observation that the kernel of~\eqref{eqn:NA-WA} is $\bT(\bk) \cap \bN^A(\bk) = \bT^A(\bk)$; the second bijection is a restatement of~\eqref{eqn:fields-Bruhat-decomp}.

\textit{Step 11. Proof of~\eqref{it:fields-C-dense}.}
Let $w_0 \in \bW$ be the longest element.  Since the action of $A$ preserves lengths, we have $w_0 \in \bW^A$.  The subscheme $\bB^A \dot{w}_0 \bB^A$ is the unique term of maximal dimension in~\eqref{eqn:fields-Bruhat-decomp}.  Since all irreducible components of $\bG^A$ have the same dimension (because it is a group scheme), we deduce that $\bB^A \dot{w}_0 \bB^A$ is dense in $\bG^A$. Finally, comparing~\eqref{eqn:fields-Bruhat-affine} (for $\dot{w}_0$) with~\eqref{eqn:decomp-CA}, we obtain an identification $\bB^A \dot{w}_0 \bB^A \cong \bC^A$.

\textit{Step 12. Proof of~\eqref{it:fields-conn-components}.}
By Step~11, the embedding $\bC^A \to \bG^A$ induces a bijection between sets of connected components. In view of~\eqref{eqn:decomp-CA}, and since $(\bU^A)_{\mathrm{red}}$ and $((\bU^-)^A)_{\mathrm{red}}$ are isomorphic to affine spaces, the same property holds for the embedding
\[
\bT^A \to \bC^A,
\] 
which proves~\eqref{it:fields-conn-components}.

\textit{Step 13. Proof of~\eqref{it:fields-Weyl-gp}.}
Thanks to Step~12, Example~\ref{ex:torus-sc-adj} shows that $\bG^A$ is connected if $\bG$ is semisimple and simply connected. The study of the morphism~\eqref{eqn:NA-WA} above therefore shows that this surjection admits a (set theoretic) section which takes values in $\bN^A(\bk) \cap (\bG^A)^\circ(\bk)$. 
From now on, we assume that the lifts $\dot{w}$ (for $w \in \bW^A$) are chosen in $(\bG^A)^\circ_{\textrm{red}}(\bk)=(\bG^A)^\circ(\bk)$. 
Let us also fix some representatives $t_1, \dots, t_r \in \bT^A(\bk)$ for the connected components of $\bT^A$. 
Then we have
\[
\bN^A = \bigsqcup_{\substack{w \in \bW^A \\ i \in \{1, \dots, r\}}} \dot{w} t_i (\bT^A)^\circ,
\]
and hence
\[
(\bN^A) \cap (\bG^A)^\circ_{\mathrm{red}} = \bigsqcup_{w \in \bW^A} \dot{w} (\bT^A)^\circ_{\mathrm{red}}.
\]
On the other hand, there exists a natural closed immersion
\[
(\bN^A) \cap (\bG^A)^\circ_{\mathrm{red}} \to N_{(\bG^A)^\circ_{\mathrm{red}}}((\bT^A)^\circ_{\mathrm{red}}),
\]
and we deduce an embedding
\begin{equation}
\label{eqn:embedding-Weyl-gp}
\bW^A \hookrightarrow N_{(\bG^A)^\circ_{\mathrm{red}}}((\bT^A)^\circ_{\mathrm{red}}) / (\bT^A)^\circ_{\mathrm{red}}.
\end{equation}
For any $w \in \bW^A$ we similarly have
\[
(\bB^A \dot{w} \bB^A) \cap (\bG^A)^\circ_{\mathrm{red}} = (\bB^A)^\circ_{\mathrm{red}} \dot{w} (\bB^A)^\circ_{\mathrm{red}},
\]
hence
\[
(\bG^A)^\circ_{\mathrm{red}} = \bigsqcup_{w \in \bW^A} (\bB^A)^\circ_{\mathrm{red}} \dot{w} (\bB^A)^\circ_{\mathrm{red}},
\]
and finally a bijection
\[
\bW^A \simto (\bB^A)^\circ_{\mathrm{red}}(\bk) \backslash (\bG^A)^\circ_{\mathrm{red}}(\bk) / (\bB^A)^\circ_{\mathrm{red}}(\bk).
\]
Using the Bruhat decomposition for the reductive group $(\bG^A)^\circ_{\mathrm{red}}$, we deduce that the morphism~\eqref{eqn:embedding-Weyl-gp} is an isomorphism, which finally proves~\eqref{it:fields-Weyl-gp}.
\end{proof}

\begin{rmk}
\label{rmk:field-root-calc}
Proposition~\ref{prop:fix-pts-fields}\eqref{it:fields-3} says that the positive roots for $(\bG^A)^\circ_{\mathrm{red}}$ are in bijection with the set of equivalence classes for $\sim$, and in Step~5 of the proof, we introduced the notation $\bar\alpha_E$ for the root corresponding to an equivalence class $E$.  This is a character of $(\bT^A)^\circ_{\mathrm{red}}$, or, equivalently, an element of $M_A/(M_A)_{\mathrm{tor}}$.  It is immediate from the definition of $\bU_E$ that $\bar\alpha_E$ is the image of some element $\alpha \in E$ under the projection $M \to M_A/(M_A)_{\mathrm{tor}}$ (and this observation was adequate for Step~6 of the proof).

But of which element is $\bar\alpha_E$ the image?  If $E$ is of type (i$'$), then it consists of a single $A$-orbit, and its image in $M_A/(M_A)_{\mathrm{tor}}$ is a singleton.  But if $E$ is of type (ii$'$), its image in $M_A/(M_A)_{\mathrm{tor}}$  consists of two elements: one is the image of the nonspecial roots, and the other is the image of the special roots.  The latter is twice the former.  We claim that
$\bar\alpha_E$ is the image in $M_A/(M_A)_{\mathrm{tor}}$ of
\[
\begin{cases}
\text{any root in $E$} & \text{if $E$ is of type (i$'$),} \\
\text{any nonspecial root in $E$} & \text{if $E$ is of type (ii$'$) and $\mathrm{char}(\bk) \ne 2$,} \\
\text{any special root in $E$} & \text{if $E$ is of type (ii$'$) and $\mathrm{char}(\bk) = 2$.}
\end{cases}
\]
To justify the latter two cases, we use~\eqref{eqn:root-subgp-type-ii} to reduce the problem to the setting of $\bG = \mathrm{SL}_{3,\bk}$.  In this setting, the claim follows from~\eqref{eqn:u3-fixed-field}.

With the formula for $\bar\alpha_E$ in hand, we read off from Step~5 of the proof that
\[
\langle \bar\alpha_E, \gamma_E^\vee \rangle = 2
\]
for any $E$.  Since $\gamma_E^\vee$ comes from a homomorphism $\mathrm{SL}_{2,\bk} \to \bG^A$ as in Step~2, we conclude that $\gamma_E^\vee$ is in fact the coroot for $(\bG^A)^\circ_{\mathrm{red}}$ corresponding to $\bar\alpha_E$.
\end{rmk}

\begin{rmk}
Let us explain an argument proving
the surjectivity of~\eqref{eqn:NA-WA} which does not rely on the results of~\cite{hee}.
We denote by $\Roots^{(A)}$, resp.~$\Roots^{(A)}_+$, the image of $\Roots$, resp.~$\Roots_+$, in $M_A$. This subset will be studied more thoroughly in~\S\ref{ss:root-data} below; for now, we note that Steps~5 and~6 in the preceding proof show that the natural map
$\Roots \to \Roots^{(A)}$ induces a bijection $\Roots/A \simto \Roots^{(A)}$ and that,
by parts~\eqref{it:fields-3}--\eqref{it:fields-4} of the proposition, the root system $\Roots^{(A) \prime}$ of $(\bG^A)_{\mathrm{red}}^\circ$ is obtained from $\Roots^{(A)}$ by discarding one element from each pair of the form $\{\alpha,2\alpha\}$. In particular, $\Roots^{(A)}_+$ defines a positive system $\Roots^{(A) \prime}_+$ in $\Roots^{(A) \prime}$.
Since $A$-invariant elements of $\bW$ act compatibly on $\Roots$ and on $M_A$, $\bW^A$ acts on $\Roots^{(A)}$.
If $w \in \bW^A$ then $w(\Roots^{(A) \prime}_+)$ is a positive system for $\Roots^{(A) \prime}$; hence there exists $n \in N_{(\bG^A)_{\mathrm{red}}^\circ}((\bT^A)_{\mathrm{red}}^\circ)(\bk)$ such that $w(\Roots^{(A) \prime}_+)=n(\Roots^{(A) \prime}_+)$. The argument in the proof of Lemma~\ref{lem:centralizer.norm.torus.weyl.gp}\eqref{it:Weyl-group-2} below shows that $N_{(\bG^A)_{\mathrm{red}}^\circ}((\bT^A)_{\mathrm{red}}^\circ)(\bk) \subset \bN^A(\bk)$. In particular $n$ defines an element $w'$ in $\bW^A$ such that $w^{-1} w'$ induces a based automorphism of $\Roots$. Hence $w'=w$, which proves the desired claim.
\end{rmk}

\subsection{Proof of Theorem~\ref{thm:fixed-pts}}

We can finally explain the proof of Theorem~\ref{thm:fixed-pts}.
For part~\eqref{it:fixed-pts-1}, we will study the multiplication morphism
\[
 f: \bC^A \times_S \bC^A \to \bG^A,
\]
along with its base changes to points or geometric points of $S$.

First, let $\overline{s}: \Spec (\bk) \to S$ be a geometric point. 
By Proposition~\ref{prop:fix-pts-fields}\eqref{it:fields-C-dense} the open subscheme 
$\bC_{\overline{s}}^A \subset \bG_{\overline{s}}^A$ is dense, and hence $f_{\overline{s}}$ is surjective.  Since any point of a scheme is the image of a geometric point, it follows that $f$ is surjective.

Next, let $s \in S$, and consider the base change $f_s : \bC^A_s \times_{\kappa(s)} \bC^A_s \to \bG^A_s$.  This map factors as the composition
\begin{multline*}
\bC^A_s \times_{\kappa(s)} \bC^A_s \xrightarrow{\text{inclusion}}
\bG^A_s \times_{\kappa(s)} \bG^A_s \xrightarrow[\sim]{(g,h) \mapsto (g,gh)}
\bG^A_s \times_{\kappa(s)} \bG^A_s \\ \xrightarrow{\text{projection to 2nd factor}}
\bG^A_s.
\end{multline*}
The first map is an open immersion (by Lemma~\ref{lem:fixed-pts-properties}\eqref{it:fixed-pts-immersion}); the second is an isomorphism; and the third is flat, since $\bG^A_s$ is (obviously) flat over $\Spec(\kappa(s))$.  We conclude that $f_s$ is flat.

The two preceding paragraphs and Lemma~\ref{lem:flatness-CA}\eqref{it:flatness-CA-3} show that $f$ satisfies the assumptions of the fiberwise criterion for flatness (see~\cite[\href{https://stacks.math.columbia.edu/tag/039E}{Tag 039E}]{stacks-project}). This criterion implies that $\bG^A$ is flat over $S$, i.e.~that~\eqref{it:fixed-pts-1} holds.

Part~\eqref{it:fixed-pts-2} follows from Proposition~\ref{prop:fix-pts-fields}\eqref{it:fields-conn-components} and
the comments following Lemma~\ref{lem:torus_fix_pts_diag}.

Regarding part~\eqref{it:fixed-pts-3}, if $\bG^A$ is smooth then so is the open subscheme $\bC^A$, so that the conditions in the statement must be satisfied by Lemma~\ref{lem:flatness-CA}\eqref{it:flatness-CA-3}. Conversely, if these conditions are satisfied then $\bC^A$ is smooth. We deduce that for any $s \in S$ the fiber $\bG_s^A$ admits a smooth open subscheme containing the unit, and is therefore smooth by~\cite[II, \S5, Th\'eor\`eme~2.1]{dg}. Since $\bG^A$ is known to be flat over $S$, by~\cite[\href{https://stacks.math.columbia.edu/tag/01V8}{Tag 01V8}]{stacks-project} this implies that $\bG^A$ is smooth, and finishes the proof.

 Finally, regarding~\eqref{it:fixed-pts-5}, we have seen that $\bG^A$ is flat over $S$ in~\eqref{it:fixed-pts-1}. It is clearly affine, and its generic fiber is smooth by~\eqref{it:fixed-pts-3} and geometrically connected (and hence connected) by~\eqref{it:fixed-pts-2}. The identity connected component of its geometric special fiber is reductive by Proposition~\ref{prop:fix-pts-fields}\eqref{it:fields-1}, which finishes the verification of the conditions of the definition in~\cite{py}. (These conditions include an extra condition on comparison of dimensions, but this condition is automatic when the group scheme is of finite type, which is the case here, as explained in the discussion following~\cite[Theorem~1.2]{py}.)
 
 \begin{rmk}
 \label{rmk:f-epi}
 Consider the morphism $f$ used in the proof of Theorem~\ref{thm:fixed-pts}\eqref{it:fixed-pts-1}. The fiberwise criterion for flatness also implies that $f$ is flat. Since it is also surjective, it is faithfully flat. It is of finite type, hence an fppf cover, and in particular an epimorphism.
 \end{rmk}
 
 \begin{cor}
 \label{cor:connected-fixed-pts-reductive}
 Assume $\bG$ and $A$ satisfy the conditions of Theorem~\ref{thm:fixed-pts}\eqref{it:fixed-pts-3}.  Let $(\bG^A)^\circ \subset \bG^A$ be the ``fiberwise identity component,'' i.e., the open subgroup scheme characterized by the following property:
 \begin{quote}
 For each point $s \in S$, $(\bG^A)_s^\circ$ is the connected component of the unit in the smooth group scheme $(\bG^A)_s$ over $\kappa(s)$.
 \end{quote}
 Then $(\bG^A)^\circ$ is a split reductive group scheme over $S$.  In particular, if $\bG$ is semisimple and either simply connected or of adjoint type, then $\bG^A$ is a split reductive group scheme over $S$.
 \end{cor}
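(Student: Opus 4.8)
The plan is to deduce Corollary~\ref{cor:connected-fixed-pts-reductive} from Theorem~\ref{thm:fixed-pts} together with the field case analyzed in Proposition~\ref{prop:fix-pts-fields}. First I would note that, under the conditions of Theorem~\ref{thm:fixed-pts}\eqref{it:fixed-pts-3}, the group scheme $\bG^A$ is smooth over $S$, so the fiberwise identity component $(\bG^A)^\circ$ is a well-defined smooth \emph{open} subgroup scheme of $\bG^A$ with connected fibers (see e.g.~\cite[Exp.~VI$_{\mathrm B}$, Théorème~3.10 and Exp.~VI$_{\mathrm A}$, Théorème~3.2]{sga3.1} for the existence of this open subgroup scheme). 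By Proposition~\ref{prop:fix-pts-fields}\eqref{it:fields-1}, each geometric fiber $((\bG^A)^\circ)_{\overline s} = ((\bG_{\overline s})^A)^\circ_{\mathrm{red}}$ is a connected reductive group. Hence $(\bG^A)^\circ \to S$ is a smooth affine group scheme, of finite presentation, all of whose geometric fibers are connected reductive groups; by the definition of a reductive group scheme (\cite[Exp.~XIX, Définition~2.7]{sga33}), it is a reductive group scheme over $S$.

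The remaining point is splitness. Here I would use the torus $(\bT^A)^\circ$, i.e.~the fiberwise identity component of $\bT^A$. By Lemma~\ref{lem:flatness-CA}\eqref{it:flatness-CA-1} and the conditions assumed, $\bT^A$ is smooth over $S$, and by Lemma~\ref{lem:torus_fix_pts_diag} it is (the diagonalizable group scheme attached to) $M_A$; thus $(\bT^A)^\circ = \Diag_S\bigl(M_A/(M_A)_{\mathrm{tor}}\bigr)$ is a split torus over $S$. By Proposition~\ref{prop:fix-pts-fields}\eqref{it:fields-2}, on each geometric fiber $(\bT^A)^\circ$ is a maximal torus of $(\bG^A)^\circ$, so $(\bT^A)^\circ$ is a maximal torus of the reductive group scheme $(\bG^A)^\circ$ in the sense of~\cite[Exp.~XII]{sga3.2}. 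Since this maximal torus is split, $(\bG^A)^\circ$ is a split reductive group scheme over $S$ by definition (\cite[Exp.~XXII, Définition~1.13]{sga33}); concretely, the root system is constant with fibers identified with $\Roots^{(A)\prime}$ as in the remark after Proposition~\ref{prop:fix-pts-fields}, which exhibits the splitting.

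For the last sentence of the statement: if $\bG$ is semisimple and either simply connected or of adjoint type, then by Example~\ref{ex:torus-sc-adj} the module $M_A$ is torsion-free, and in particular its torsion subgroup is trivial, so the first condition of Theorem~\ref{thm:fixed-pts}\eqref{it:fixed-pts-3} is automatic; moreover in the adjoint or simply connected case the analysis of type $\mathsf{A}_{2n}$ components still allows $2$ to be a residual characteristic. Wait --- this needs care: the second condition of~\eqref{it:fixed-pts-3} can genuinely fail (e.g.~$\mathrm{SL}_{2n+1,\Z}$ with the nontrivial involution over a base with $2$ a residual characteristic), so the statement ``$\bG^A$ is a split reductive group scheme'' can only be asserted under the hypotheses of the corollary, which I retain throughout. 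Under those hypotheses, $(\bG^A)^\circ$ is split reductive by the above, and it remains to see $\bG^A = (\bG^A)^\circ$, i.e.~that the fibers are connected. By Proposition~\ref{prop:fix-pts-fields}\eqref{it:fields-conn-components} the group of connected components of $(\bG^A)_{\overline s}$ is identified with that of $(\bT^A)_{\overline s} = \Diag_{\overline s}(M_A)$; when $M_A$ is torsion-free this is trivial, so all geometric fibers of $\bG^A$ are connected, whence $\bG^A = (\bG^A)^\circ$ is a split reductive group scheme over $S$.

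I do not expect any serious obstacle here: the corollary is essentially a formal packaging of Theorem~\ref{thm:fixed-pts} and Proposition~\ref{prop:fix-pts-fields}. The only mildly delicate point is invoking the correct ``reductive group scheme'' criterion --- namely that a smooth affine group scheme with connected reductive geometric fibers is a reductive group scheme, and that the presence of a split maximal torus (checked fiberwise) upgrades this to \emph{split} --- together with the bookkeeping that the torsion-free hypothesis in the final sentence is exactly what forces fiberwise connectedness.
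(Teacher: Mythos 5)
Your overall strategy (combine Theorem~\ref{thm:fixed-pts}\eqref{it:fixed-pts-3} with the fiberwise analysis of Proposition~\ref{prop:fix-pts-fields}, then identify $\bG^A$ with $(\bG^A)^\circ$ in the simply connected/adjoint case via Example~\ref{ex:torus-sc-adj} and Proposition~\ref{prop:fix-pts-fields}\eqref{it:fields-conn-components}) is the same as the paper's, and the last step is handled correctly. But there is a genuine gap at the central point: you assert that $(\bG^A)^\circ$ is an \emph{affine} group scheme over $S$ and then invoke the definition of a reductive group scheme, which requires affineness. An open subgroup scheme of an affine group scheme is a priori only quasi-affine, and the fiberwise identity component is exactly the kind of open subscheme for which affineness is not automatic; this is the delicate step, not a formality. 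The paper deals with it by first reducing to the base $S=\Spec(\Z[\frac{1}{N}])$ (possible because the hypotheses of Theorem~\ref{thm:fixed-pts}\eqref{it:fixed-pts-3} force $S$ to live over $\Z[\frac{1}{N}]$, the pinned datum with its $A$-action descends to $\Z[\frac{1}{N}]$, and $(-)^\circ$ commutes with base change by~\cite[Exp.~$\text{VI}_\text{B}$, Proposition~3.3]{sga31}), and then over this noetherian base showing that the open immersion $(\bG^A)^\circ \to \bG^A$ is quasi-compact, hence $(\bG^A)^\circ$ is quasi-affine, and finally invoking~\cite[Exp.~$\text{VI}_\text{B}$, Proposition~12.9]{sga31} to upgrade quasi-affine to affine. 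Your proof contains no substitute for this step.

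A secondary weakness of the same kind concerns splitness over a general base $S$: having a split maximal torus $(\bT^A)^\circ=\Diag_S(M_A/(M_A)_{\mathrm{tor}})$ whose fibers are maximal tori is not by itself a splitting in the sense of~\cite[Exp.~XXII, D\'efinition~1.13]{sga33}; one also needs the root spaces of $\Lie((\bG^A)^\circ)$ to be \emph{free} $\scO_S$-modules (and the roots constant), which is automatic when $\mathrm{Pic}(S)$ is trivial but not for arbitrary $S$. The reduction to $\Spec(\Z[\frac{1}{N}])$ disposes of this as well, since a base change of a split reductive group scheme is split; alternatively you would need a direct argument (e.g.\ via the Chevalley--Steinberg system) that the weight spaces are free. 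So the proposal needs either the base-change reduction or explicit arguments for affineness and freeness of root spaces to be complete.
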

For the existence of the subgroup scheme $(\bG^A)^\circ$, see~\cite[Exp.~$\text{VI}_\text{B}$, \S 3]{sga31}.
\begin{proof}
Let $N$ be the order of the torsion subgroup of $M_A$, multiplied by $2$ if $\Roots$ has an indecomposable constituent of type $\mathsf{A}_{2n}$ with a nontrivial action of its stabilizer in $A$. The conditions in Theorem~\ref{thm:fixed-pts}\eqref{it:fixed-pts-3} imply that $S$ admits a map to $\Spec(\Z[\frac{1}{N}])$.  Then $\bG$ (together with the action of $A$) can be obtained by base change from a pinned reductive group scheme over $\Spec(\Z[\frac{1}{N}])$; by compatibility of the operation $(-)^\circ$ with base change (see~\cite[Exp.~$\text{VI}_\text{B}$, Proposition~3.3]{sga31}), this reduces the proof to the case $S=\Spec(\Z[\frac{1}{N}])$. Now $(\bG^A)^\circ$ is smooth, as an open subscheme of the smooth scheme $\bG^A$.
The open immersion $(\bG^A)^\circ \to \bG^A$ is quasi-compact because $\bG^A$ is noetherian (see~\cite[\href{https://stacks.math.columbia.edu/tag/01OX}{Tag 01OX}]{stacks-project}), so $(\bG^A)^\circ$ is quasi-affine over $\Spec(\Z[\frac{1}{N}])$ by~\cite[\href{https://stacks.math.columbia.edu/tag/02JR}{Tag 02JR}]{stacks-project} and~\cite[\href{https://stacks.math.columbia.edu/tag/01SN}{Tag 01SN}]{stacks-project}. In view of~\cite[Exp.~$\text{VI}_\text{B}$, Proposition~12.9]{sga31}, it follows that $(\bG^A)^\circ$ is affine. Finally, each geometric fiber of $(\bG^A)^\circ$ is a connected reductive group, so $(\bG^A)^\circ$ is a reductive group scheme. The subgroup $(\bT^A)^\circ$ is a split maximal torus of $(\bG^A)^\circ$ by Proposition~\ref{prop:fix-pts-fields}\eqref{it:fields-2}.

For the last assertion, if $\bG$ is semisimple and either simply connected or of adjoint type, Example~\ref{ex:torus-sc-adj} and Proposition~\ref{prop:fix-pts-fields}\eqref{it:fields-conn-components} imply that every geometric fiber of $\bG^A$ is connected, so $(\bG^A)^\circ = \bG^A$.
\end{proof}
  
 \begin{ex}
 \label{ex:sl_odd}
 Let us come back to the example considered in~\S\ref{sss:action-SLodd}, with the pinning of Example~\ref{ex:roots-A2n}. (With this choice of pinning, it is easily seen that $\Z/2\Z$ acts by pinned automorphisms.)
 \begin{enumerate}
 \item
 \label{ex:sl_odd.1}
 First, assume that~$n=1$, i.e.~$\bG=\mathrm{SL}_{3,\Z}$. By Corollary~\ref{cor:connected-fixed-pts-reductive},
 \[
 \Spec(\Z[\tfrac{1}{2}]) \times_{\Spec(\Z)} (\mathrm{SL}_{3,\Z})^{\Z/2\Z}
 \]
 is a reductive group scheme over $\Spec(\Z[\frac{1}{2}])$. 
 In fact, considering the root data (see~\S\ref{ss:root-data} below) one sees that~\eqref{eqn:SL3-fixed-pts} is an isomorphism. 
 On the other hand, $\Spec(\F_2) \times_{\Spec(\Z)} (\mathrm{SL}_{3,\Z})^{\Z/2\Z}$ is not reduced, and~\eqref{eqn:SL3-fixed-pts-2} is an isomorphism.
 \item
  \label{ex:sl_odd.2}
Now, consider the case $n \in \Z_{\geq 2}$. Then $\bG^A$ is a flat and geometrically connected group scheme over $\Z$, the restriction
 \[
 \Spec(\Z[\tfrac{1}{2}]) \times_{\Spec(\Z)} (\mathrm{SL}_{2n+1,\Z})^{\Z/2\Z}
 \]
is isomorphic to $\mathrm{SO}_{2n+1,\Z[\frac{1}{2}]}$, but $ \Spec(\F_2) \times_{\Spec(\Z)} (\mathrm{SL}_{2n+1,\Z})^{\Z/2\Z}$ is nonreduced. The associated reduced group scheme is simple and simply connected of type $\mathsf{C}_{n}$, i.e.~isomorphic to $\mathrm{Sp}_{2n,\F_2}$. 
In particular, the base change of $\bG$ to $\Z_2$ is a quasi-reductive $\Z_2$-group scheme in the sense of Prasad--Yu~\cite{py} which is nonreductive.
 \end{enumerate}
 \end{ex}
 
\subsection{The case of general fields}
\label{ss:fixed-pts-fields}

For completeness, we explain in this subsection how to generalize the results of~\S\ref{ss:fixed-pts-alg-closed-fields} to more general base fields.
We therefore assume that $S=\Spec(\bk)$ for some field $\bk$, which we do \emph{not} assume to be algebraically closed. We choose an algebraic closure of $\bk$, which we denote by $\overline{\bk}$. For any scheme $X$ over $\bk$, we set $X_{\overline{\bk}} := \overline{\bk} \otimes_\bk X$. With this notation, by~\eqref{eqn:fixed-pts-bc} we have
\[
(\bG^A)_{\overline{\bk}} = (\bG_{\overline{\bk}})^A;
\]
this group scheme will be denoted $\bG^A_{\overline{\bk}}$.

\begin{prop}
\phantomsection
\label{prop:fixed-pts-fields-2}
\begin{enumerate}
\item
\label{it:fix-pts-fields-red}
The reduced subscheme $(\bG^A)_{\mathrm{red}}$ is geometrically reduced; as a consequence it is a subgroup scheme of $\bG^A$, and we have
$(\bG^A)_{\mathrm{red}, \overline{\bk}} = (\bG_{\overline{\bk}}^A)_{\mathrm{red}}$.
\item
\label{it:fields-reductive}
We have $((\bG^A)^\circ_{\mathrm{red}})_{\overline{\bk}} = (\bG_{\overline{\bk}}^A)^\circ_{\mathrm{red}}$, and $(\bG^A)^\circ_{\mathrm{red}}$ is a split reductive group over $\bk$.
\end{enumerate}
\end{prop}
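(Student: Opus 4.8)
The plan is to reduce both assertions to the algebraically closed case established in Proposition~\ref{prop:fix-pts-fields}, using that the big cell $\bC^A$, the twisted $\mathrm{SL}_2$-maps $\imath_E$ of~\S\ref{ss:SL2-maps}, and the Weyl representatives they produce are all defined already over $\bk$ (in fact over $S$, hence over $\Z$). The only place where the imperfection of $\bk$ really intervenes is part~\eqref{it:fix-pts-fields-red}, i.e.\ that $(\bG^A)_{\mathrm{red}}$ is \emph{geometrically} reduced; once that is known, everything else is formal or a descent from $\overline{\bk}$.

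First I would check directly that $\bC^A_{\mathrm{red}}$ is geometrically reduced over $\bk$ and that its formation commutes with base change to $\overline{\bk}$. By~\eqref{eqn:decomp-CA} it suffices to treat the three factors. By Lemma~\ref{lem:torus_fix_pts_diag}, $\bT^A = \Diag_\bk(M_A)$, whose reduction is $\Diag_\bk(M_A/P)$ with $P$ the $\mathrm{char}(\bk)$-primary part of $(M_A)_{\mathrm{tor}}$, a product of copies of $\Gm$ with a finite étale group; and by~\eqref{eqn:ua-product} each of $\bU^A$ and $(\bU^-)^A$ is a finite product of copies of $\Ga$ and of $(\mathrm{U}_{3,\bk})^{\Z/2\Z}$, whose reduction is isomorphic to $\Ga$ in every characteristic by~\eqref{eqn:u3-fixed-field}. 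One then invokes the facts that a product over $\bk$ of geometrically reduced schemes is geometrically reduced, and that $(X\times_\bk Y)_{\mathrm{red}} = X_{\mathrm{red}}\times_\bk Y_{\mathrm{red}}$ when both factors are geometrically reduced. On the way one also records, using Proposition~\ref{prop:fix-pts-fields}\eqref{it:fields-C-dense} and descent of density along the faithfully flat surjection $\bG^A_{\overline{\bk}}\to\bG^A$, that $\bC^A$ is dense in $\bG^A$.

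The key step is then to show that $\bG^A$ is covered by the finitely many open subschemes $\dot{w}\,\bC^A$, $w\in\bW^A$, where the $\dot{w}$ are \emph{$\bk$-rational} lifts. By~\cite{hee} (see also Step~8 in the proof of Proposition~\ref{prop:fix-pts-fields}) the Coxeter generators of $\bW^A$ are the images under the maps of~\S\ref{ss:SL2-maps} of the matrices $\left(\begin{smallmatrix}0&1\\-1&0\end{smallmatrix}\right)$, resp.\ $\left(\begin{smallmatrix}0&0&1\\0&-1&0\\1&0&0\end{smallmatrix}\right)$; these are defined over $\Z$ (the latter being fixed by the involution of~\S\ref{sss:action-SLodd}), and the domains $\mathrm{SL}_{2,S}$ and $(\mathrm{SL}_{3,S})^{\Z/2\Z}$ are geometrically connected, so the images lie in $\bN^A(\bk)\cap(\bG^A)^\circ(\bk)$. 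Hence $\bN^A(\bk)\to\bW^A$ is surjective, and one may take $\dot{w}\in\bN^A(\bk)\cap(\bG^A)^\circ(\bk)$ for all $w$. The Bruhat decomposition $\bG^A=\bigsqcup_{w\in\bW^A}\bB^A\dot{w}\bB^A$, the identification $\bC^A\cong\bB^A\dot{w}_0\bB^A$, and the covering $\bG^A_{\overline{\bk}}=\bigcup_{w\in\bW^A}\dot{w}\,\bC^A_{\overline{\bk}}$ then follow by applying Proposition~\ref{prop:fix-pts-fields} over $\overline{\bk}$ --- for the covering, via the standard fact that a connected reductive group is the union of the translates of its big cell by a set of Weyl representatives, extended across the component group $\pi_0(\bT^A)$ by absorbing the extra translations into the torus factor of the big cell --- and then descending along $\bG^A_{\overline{\bk}}\to\bG^A$. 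I expect this covering to be the main obstacle: it is the one place where one has to produce genuine $\bk$-rational structure rather than simply quote the algebraically closed case.

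Granting this, $(\bG^A)_{\mathrm{red}}$ is covered by the opens $(\dot{w}\,\bC^A)_{\mathrm{red}}\cong\bC^A_{\mathrm{red}}$, hence is geometrically reduced; then $(\bG^A)_{\mathrm{red}}\times_\bk(\bG^A)_{\mathrm{red}}$ is reduced, so multiplication and inversion restrict to it and it is a subgroup scheme, and $((\bG^A)_{\mathrm{red}})_{\overline{\bk}}$ is a reduced closed subscheme of $\bG^A_{\overline{\bk}}$ with full support, hence equals $(\bG^A_{\overline{\bk}})_{\mathrm{red}}$; this proves~\eqref{it:fix-pts-fields-red}. For~\eqref{it:fields-reductive}: being a geometrically reduced group scheme of finite type over $\bk$, $(\bG^A)_{\mathrm{red}}$ is smooth, so $(\bG^A)^\circ_{\mathrm{red}}$ is smooth, connected, and has a rational point, hence is geometrically connected, and $((\bG^A)^\circ_{\mathrm{red}})_{\overline{\bk}}=((\bG^A_{\overline{\bk}})_{\mathrm{red}})^\circ=(\bG^A_{\overline{\bk}})^\circ_{\mathrm{red}}$ by~\eqref{it:fix-pts-fields-red}; by Proposition~\ref{prop:fix-pts-fields}\eqref{it:fields-1} this is connected reductive, so $(\bG^A)^\circ_{\mathrm{red}}$ is a connected reductive group over $\bk$. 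It is split because $(\bT^A)^\circ_{\mathrm{red}}=\Diag_\bk(M_A/(M_A)_{\mathrm{tor}})\cong\Gm^r$ is a split torus over $\bk$ which, by Proposition~\ref{prop:fix-pts-fields}\eqref{it:fields-2} and the compatibility with base change to $\overline{\bk}$ established above, is a maximal torus of $(\bG^A)^\circ_{\mathrm{red}}$.
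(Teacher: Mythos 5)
Your proposal is correct, and its overall skeleton (establish geometric reducedness of $(\bC^A)_{\mathrm{red}}$ factor by factor, then transfer to $\bG^A$, then deduce part~(2) formally) matches the paper's; but the pivotal transfer step is done by a genuinely different mechanism. The paper does not construct any covering of $\bG^A$: it reuses the multiplication morphism $f\colon \bC^A\times\bC^A\to\bG^A$, already shown to be faithfully flat in the proof of Theorem~\ref{thm:fixed-pts}\eqref{it:fixed-pts-1} (cf.\ Remark~\ref{rmk:f-epi}), to get an injection $\scO((\bG^A)_{\mathrm{red}})\hookrightarrow\scO((\bC^A\times\bC^A)_{\mathrm{red}})$ into a geometrically reduced ring, and concludes immediately since $\overline{\bk}$ is flat over $\bk$. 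You instead cover $\bG^A$ by the opens $\dot w\,\bC^A$ with $\bk$-rational lifts $\dot w\in\bN^A(\bk)$ coming from the $\Z$-defined twisted $\mathrm{SL}_2$-maps, prove the covering over $\overline{\bk}$ via the big cell of $(\bG^A_{\overline{\bk}})^\circ_{\mathrm{red}}$ (whose root groups are the $((\bU_E)^A)_{\mathrm{red}}$, so its big cell sits inside $\bC^A$) together with absorption of the torus component representatives, and descend the covering along the surjection $\bG^A_{\overline{\bk}}\to\bG^A$; this is correct --- the classical fact $G=\bigcup_w\dot w\,U^-B$, the normalization of $\bT^A$ by $\dot w$, and the topological descent of an open cover all check out --- but it costs you the extra work of producing rational Weyl representatives and the component bookkeeping, which the paper's argument sidesteps entirely. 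In compensation, your route yields as a by-product a rational Bruhat-type open cover of $\bG^A$ over an arbitrary field, a refinement the paper only records over algebraically closed fields (and then asserts, after Proposition~\ref{prop:fixed-pts-fields-2}, that the structural statements of Proposition~\ref{prop:fix-pts-fields} extend). Your treatment of part~(2) coincides with the paper's, up to replacing the citation that formation of the neutral component commutes with field extension by the equivalent argument via smoothness and the rational point.
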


\begin{proof}
\eqref{it:fix-pts-fields-red}
Consider once again the open subscheme $\bC^A \subset \bG^A$, and the decompositions~\eqref{eqn:decomp-CA} and~\eqref{eqn:ua-product}.
For any equivalence class $E$ in $\Roots_+$ we have the subgroup $((\bU_E)^A)_{\mathrm{red}}$ which is isomorphic to $\mathbb{G}_{\mathrm{a},\bk}$ (as in Step~2 of the proof of Proposition~\ref{prop:fix-pts-fields}); in particular $(\bU^A)_{\mathrm{red}}$ is an affine space over $\bk$. Similarly $((\bU^-)^A)_{\mathrm{red}}$ is an affine space over $\bk$. We deduce that $(\bC^A)_{\mathrm{red}}$ is the product of $(\bT^A)_{\mathrm{red}}$ and an affine space; in particular it is geometrically reduced.

Recall now the morphism
\[
f : \bC^A \times \bC^A \to \bG^A
\]
from the proof of Theorem~\ref{thm:fixed-pts}. 
Since $f$ is faithfully flat it induces an injective morphism
$\scO(\bG^A) \hookrightarrow \scO(\bC^A \times \bC^A)$,
hence an embedding
\[
\scO((\bG^A)_{\mathrm{red}}) \hookrightarrow \scO((\bC^A \times \bC^A)_{\mathrm{red}}).
\]
Here, since $(\bC^A)_{\mathrm{red}}$ is geometrically reduced we have $(\bC^A \times \bC^A)_{\mathrm{red}} = (\bC^A)_{\mathrm{red}} \times (\bC^A)_{\mathrm{red}}$, and this scheme is also geometrically reduced.
Hence $(\bG^A)_{\mathrm{red}}$ is geometrically reduced. The other statements in~\eqref{it:fix-pts-fields-red} are immediate consequences.

\eqref{it:fields-reductive}
Since the formation of the neutral component commutes with field extensions (as follows from~\cite[\href{https://stacks.math.columbia.edu/tag/04KV}{Tag 04KV}]{stacks-project}) we have
\[
((\bG^A)^\circ_{\mathrm{red}})_{\overline{\bk}} = ((\bG^A)_{\mathrm{red}, \overline{\bk}})^\circ,
\]
and by~\eqref{it:fix-pts-fields-red} the right-hand side coincides with $(\bG_{\overline{\bk}}^A)^\circ_{\mathrm{red}}$, which proves the desired equality. Since the latter group is connected and reductive by Proposition~\ref{prop:fix-pts-fields}\eqref{it:fields-1}, we deduce that $(\bG^A)^\circ_{\mathrm{red}}$ is a reductive group over $\bk$. The maximal torus $(\bT^A)^\circ_{\mathrm{red}}=\Diag_{\bk}(M_A / (M_A)_{\mathrm{tor}})$ is split, hence $(\bG^A)^\circ_{\mathrm{red}}$ is split.
\end{proof}

Once Proposition~\ref{prop:fixed-pts-fields-2} is established, the other structural properties of Proposition~\ref{prop:fix-pts-fields} follow for general base fields. In particular, here again the connected components of $\bG^A$ are in bijection with those of $\bT^A$; some of these connected components might therefore not be geometrically connected.
 
 \section{Complements}
 \label{sec:complements}
 
\subsection{Root data for fixed points}
\label{ss:root-data}

In this subsection we discuss the notion of roots and coroots for the fixed point group schemes $\bG^A$. We will give a description that only depends on the following data: a root datum
\[
\Phi = (M,\Roots, M^\vee, \Roots^\vee),
\]
a basis $\Delta \subset \Roots$, and an action of a group $A$ on $\Phi$ preserving $\Delta$. In this setting we will denote by $W_\Phi$ the Weyl group of $\Phi$, and by $\Roots^{(A)}$, resp.~$\Roots_+^{(A)}$, resp.~$\Delta^{(A)}$, the image of $\Roots$, resp.~$\Roots_+$, resp.~$\Delta$, along the quotient map
\[
M \to M_A.
\] 
Considering a pinned reductive group scheme with root datum $(M,\Roots, M^\vee, \Roots^\vee)$ and basis $\Delta$ over an algebraically closed field, Steps~5--6 in the proof of Proposition~\ref{prop:fix-pts-fields} (and the fact that $\mathbf{K}$ is trivial) show that the natural map
\[
\Roots \to \Roots^{(A)}
\]
induces a bijection $\Roots/A \simto \Roots^{(A)}$. In particular, we have
\[
\Roots^{(A)} = \Roots^{(A)}_+ \sqcup -\Roots^{(A)}_+.
\]

Recall the equivalence relation $\sim$ considered in~\S\ref{ss:equiv-relation}.
If an equivalence class $E$ is of type (ii$'$), write it as $E= E' \sqcup E''$ with $E'$ being the set of nonspecial roots, and $E''$ the set of special roots.  (Both $E'$ and $E''$ are $A$-orbits.) Now, we denote by:
\begin{itemize}
\item
$\Roots^{(A)}_{\mathrm{nd},\mathrm{nm},+} \subset \Roots_+^{(A)}$ the subset consisting of restrictions of equivalence classes $E$ of type (i$'$);
\item
$\Roots^{(A)}_{\mathrm{m},+} \subset \Roots_+^{(A)}$ the subset consisting of restrictions of $A$-orbits of the form $E'$ where $E=E'\sqcup E''$ is an equivalence class of type (ii$'$);
\item
$\Roots^{(A)}_{\mathrm{d},+} \subset \Roots_+^{(A)}$ the subset consisting of restrictions of $A$-orbits of the form $E''$ where $E=E'\sqcup E''$ is an equivalence class of type (ii$'$).
\end{itemize}
We also set
\[
\Roots^{(A)}_{\mathrm{nd},\mathrm{nm}} = \Roots^{(A)}_{\mathrm{nd},\mathrm{nm},+} \sqcup -\Roots^{(A)}_{\mathrm{nd},\mathrm{nm},+}, \ \ \Roots^{(A)}_{\mathrm{m}} = \Roots^{(A)}_{\mathrm{m},+} \sqcup -\Roots^{(A)}_{\mathrm{m},+}, \ \ \Roots^{(A)}_{\mathrm{d}} = \Roots^{(A)}_{\mathrm{d},+} \sqcup -\Roots^{(A)}_{\mathrm{d},+}.
\]
Then we have a partition
\[
\Roots^{(A)} = \Roots_{\mathrm{nd,nm}}^{(A)} \sqcup \Roots_{\mathrm{m}}^{(A)} \sqcup \Roots_{\mathrm{d}}^{(A)}, 
\]
and the assignment $\gamma \mapsto 2\gamma$ induces a bijection $\Roots_{\mathrm{m}}^{(A)} \simto \Roots_{\mathrm{d}}^{(A)}$. (Here, ``$\mathrm{m}$'' stands for ``multipliable,'' ``$\mathrm{d}$'' for ``divisible,'' and ``$\mathrm{n}$'' for ``non''.) We also set
\[
\Roots_1^{(A)} = \Roots_{\mathrm{nd,nm}}^{(A)} \sqcup \Roots_{\mathrm{m}}^{(A)}, \quad
\Roots^{(A)}_2 = \Roots_{\mathrm{nd,nm}}^{(A)} \sqcup \Roots_{\mathrm{d}}^{(A)}.
\]
Finally, we set
\[
\Delta^{(A)}_2 = (\Delta^{(A)} \cap \Roots^{(A)}_{\mathrm{nd},\mathrm{nm}}) \sqcup \{2\alpha : \alpha \in \Delta^{(A)} \cap \Roots^{(A)}_{\mathrm{m}} \}.
\]

It turns out that $\Roots^{(A)}$ ``extends'' to a root datum. For this, one has to define the set of coroots $\Roots^{(A),\vee}$ via the following formula. Let $\gamma \in \Roots^{(A)}_+$, and let $E$ be the unique equivalence class containing the $A$-orbit in $\Roots_+$ corresponding to $\gamma$. Then we set
\begin{equation}
\label{eqn:coroot-formula}
\gamma^\vee=
\begin{cases}
\sum_{\alpha \in E}\alpha^\vee & \text{if $\gamma \in \Roots^{(A)}_{\mathrm{nd}, \mathrm{nm}} \sqcup \Roots^{(A)}_{\mathrm{m}}$;} \\ 
  \sum_{\alpha \in E''} \alpha^\vee & \text{if $\gamma \in \Roots^{(A)}_{\mathrm{d}}$.}
\end{cases}
\end{equation}
If $\gamma \in -\Roots^{(A)}_+$, we set $\gamma^\vee := -(-\gamma)^\vee$. In both cases, we regard these coroots as elements in $(M^\vee)^A$.

If $\bT$ is a torus with character lattice $M$ (over a connected scheme $S$), and if we consider the action on $A$ induced by our given action on $M$,
the set of cocharacters $\bG_{\mathrm{m}} \to \bT^A$ identifies naturally with $(M^\vee)^A$, where $M^\vee$ is identified with the lattice of cocharacters $\bG_{\mathrm{m}} \to \bT$.
 Hence, the formulas above determine cocharacters of $\bT^A$. This contrasts with the character lattice $M_A$ of $\bT^A$ which may admit a non-trivial torsion subgroup $(M_A)_{\mathrm{tor}}$. The corresponding quotient $M_A / (M_A)_{\mathrm{tor}}$ is the weight lattice 
 of the maximal subtorus scheme 
 of $\bT^A$. 
Steps 5--6 in the proof of Proposition~\ref{prop:fix-pts-fields} show that the quotient morphism $M_A \to M_A / (M_A)_{\mathrm{tor}}$ is injective on $\Roots^{(A)}$, so that this set can (and will) also be regarded as a subset in $M_A / (M_A)_{\mathrm{tor}}$.
 We will denote by $\Roots^{(A),\vee}$, resp.~$\Roots_1^{(A),\vee}$, resp.~$\Roots_2^{(A),\vee}$, the image of $\Roots^{(A)}$, resp.~$\Roots^{(A)}_1$, resp.~$\Roots^{(A)}_2$, under the assignment $\gamma \mapsto \gamma^\vee$.

The following proposition is essentially proved in~\cite{haines,haines2}; see also~\cite{adler-lansky-2}. 

\begin{prop}
\phantomsection
\label{prop:root.syst.fix.pts}
\begin{enumerate}
\item
The quadruple $(M_A / (M_A)_{\mathrm{tor}},\Roots_1^{(A)}, (M^\vee)^A,\Roots_1^{(A),\vee})$ is a reduced root datum, with basis $\Delta^{(A)}$ and Weyl group $(W_\Phi)^A$.
\item
The quadruple $(M_A / (M_A)_{\mathrm{tor}},\Roots_2^{(A)}, (M^\vee)^A,\Roots_2^{(A),\vee})$ is a reduced root datum, with basis $\Delta_2^{(A)}$ and Weyl group $(W_\Phi)^A$.
\item
\label{it:root-data-3}
The quadruple $(M_A / (M_A)_{\mathrm{tor}},\Roots^{(A)}, (M^\vee)^A,\Roots^{(A),\vee})$ is a (not necessarily reduced) root datum, with basis $\Delta^{(A)}$ and Weyl group $(W_\Phi)^A$.
\end{enumerate}
\end{prop}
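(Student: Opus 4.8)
The plan is to regard this as a purely combinatorial statement about the root datum $\Phi$ together with its $A$-action, to prove part~(1) by realizing the claimed quadruple as the root datum of an honest reductive group arising as a fixed-point group in characteristic $0$, and then to deduce parts~(2) and~(3) from~(1) by elementary rescaling and union arguments.

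For part~(1), I would fix an algebraically closed field $\bk$ of characteristic $0$ and let $(\bG,\bT,M,\Roots,\Delta,X)$ be the pinned reductive group over $\bk$ with root datum $\Phi$ and basis $\Delta$, endowed with the action of $A$ corresponding to the given homomorphism $A \to \mathrm{Aut}(\Phi,\Delta) = \mathrm{Aut}(\bG,\bT,M,\Roots,\Delta,X)$. Since $\bk$ has no residual characteristic, the conditions of Theorem~\ref{thm:fixed-pts}\eqref{it:fixed-pts-3} are vacuously satisfied, so by Corollary~\ref{cor:connected-fixed-pts-reductive} the group $H := (\bG^A)^\circ$ is a connected split reductive group over $\bk$, with maximal torus $T' := (\bT^A)^\circ$ by Proposition~\ref{prop:fix-pts-fields}\eqref{it:fields-2}. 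I would then identify the root datum of $(H,T')$ with the quadruple in~(1). On lattices: by Lemma~\ref{lem:torus_fix_pts_diag}, $\bT^A \cong \Diag_\bk(M_A)$, so $X^*(T') = M_A/(M_A)_{\mathrm{tor}}$ and $X_*(T') = X_*(\bT^A) = \Hom(M_A,\Z) = (M^\vee)^A$. On roots and coroots: Proposition~\ref{prop:fix-pts-fields}\eqref{it:fields-3}--\eqref{it:fields-4} identifies the positive roots of $(H,T')$ relative to $(\bB^A)^\circ_{\mathrm{red}}$ with the characters $\bar\alpha_E$ by which $T'$ acts on the root subgroups $((\bU_E)^A)_{\mathrm{red}}$, indexed by the equivalence classes $E$ for $\sim$; Remark~\ref{rmk:field-root-calc} (in characteristic $\neq 2$) identifies $\bar\alpha_E$ with the image in $M_A/(M_A)_{\mathrm{tor}}$ of an arbitrary root of $E$ (type (i$'$)) or of an arbitrary nonspecial root of $E$ (type (ii$'$)), i.e.\ with the elements of $\Roots^{(A)}_{1,+}$ (these are pairwise distinct by Step~6 in the proof of Proposition~\ref{prop:fix-pts-fields}), and the corresponding coroot is $\gamma^\vee_E = \sum_{\alpha\in E}\alpha^\vee$ by~\eqref{eqn:coroot-formula-pre}, matching~\eqref{eqn:coroot-formula} on $\Roots^{(A)}_1$. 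Hence the root datum of $(H,T')$ is exactly $(M_A/(M_A)_{\mathrm{tor}},\Roots^{(A)}_1,(M^\vee)^A,\Roots^{(A),\vee}_1)$; this is therefore a reduced root datum, its Weyl group is the Weyl group of $H$, which is $\bW^A = (W_\Phi)^A$ by Proposition~\ref{prop:fix-pts-fields}\eqref{it:fields-Weyl-gp}, and $\Delta^{(A)}$ is the basis attached to $\Roots^{(A)}_{1,+}$: indeed the Coxeter generators of $\bW^A$ are indexed by the $A$-orbits in $\Delta$ (as in Step~8 of the proof of Proposition~\ref{prop:fix-pts-fields}, using~\cite{hee}), the simple root attached to the orbit of $\alpha\in\Delta$ being $\bar\alpha_E$ for $E$ the class of $\alpha$, and since special roots are never simple (by the explicit form~\eqref{eqn:subsets-A2n}) these images run exactly over $\Delta^{(A)}$.

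For part~(2), the key point is that $\sum_{\alpha\in E'}\alpha^\vee = \sum_{\alpha\in E''}\alpha^\vee$ in $M^\vee$ for any type~(ii$'$) class $E = E'\sqcup E''$; this is checked directly from the explicit description~\eqref{eqn:subsets-A2n} of such classes inside an $\mathsf{A}_{2n}$ component, using that all roots in type $\mathsf{A}$ have the same length. Consequently the divisible coroot $\sum_{\alpha\in E''}\alpha^\vee$ attached to $2\gamma$ equals $\tfrac{1}{2}\gamma^\vee_E$, so for multipliable $\gamma$ the reflections $s_{2\gamma}$ on $M_A/(M_A)_{\mathrm{tor}}$ and $s_{(2\gamma)^\vee}$ on $(M^\vee)^A$ coincide with $s_\gamma$ and $s_{\gamma^\vee}$ respectively. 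Thus $(\Roots^{(A)}_2,\Roots^{(A),\vee}_2)$ is obtained from $(\Roots^{(A)}_1,\Roots^{(A),\vee}_1)$ by rescaling the multipliable roots by $2$ and their coroots by $\tfrac{1}{2}$; this operation manifestly preserves the root-datum axioms, carries the basis $\Delta^{(A)}$ to $\Delta^{(A)}_2$, and does not change the Weyl group, which proves~(2).

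For part~(3), write $\Roots^{(A)} = \Roots^{(A)}_1 \cup \Roots^{(A)}_2$ and $\Roots^{(A),\vee} = \Roots^{(A),\vee}_1 \cup \Roots^{(A),\vee}_2$. For every $\gamma\in\Roots^{(A)}$ the reflections $s_\gamma$ on $M_A/(M_A)_{\mathrm{tor}}$ and $s_{\gamma^\vee}$ on $(M^\vee)^A$ coincide with those attached to $\Roots^{(A)}_1$ (they are the same linear maps attached to $\gamma$ and to $2\gamma$), and these preserve both $\Roots^{(A)}_1$ and $\Roots^{(A)}_2$, as well as $\Roots^{(A),\vee}_1$ and $\Roots^{(A),\vee}_2$, by~(1) and~(2); hence they preserve the unions. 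Together with $\langle\gamma,\gamma^\vee\rangle = 2$ for all $\gamma\in\Roots^{(A)}$ (from~(1)--(2)), this shows that the quadruple is a root datum, which is reduced precisely when no type~(ii$'$) class occurs, i.e.\ when $\Roots^{(A)}_{\mathrm{d}} = \varnothing$. Its Weyl group is still generated by the $s_\gamma$, hence is $(W_\Phi)^A$, and a basis --- in the sense appropriate for a possibly non-reduced root datum, namely a basis of the subsystem of indivisible roots $\Roots^{(A)}_1$ --- is $\Delta^{(A)}$ by part~(1). The step I expect to require the most care is the identification in part~(1) of the geometric root datum of $(H,T')$ with the combinatorially defined $\Roots^{(A)}_1$ and $\Roots^{(A),\vee}_1$ --- in particular the identification of $\Delta^{(A)}$ with the set of simple roots, which rests on the structure of type~(ii$'$) classes --- together with the coroot identity underlying part~(2); the rest is formal once Proposition~\ref{prop:fix-pts-fields} and Remark~\ref{rmk:field-root-calc} are available.
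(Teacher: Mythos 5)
Your proposal is correct, and for part~(1) it coincides with the paper's argument: the paper also realizes the first quadruple as the root datum of $(\bG^A)^\circ_{\mathrm{red}}$ over an algebraically closed field of characteristic $\neq 2$, using Proposition~\ref{prop:fix-pts-fields}, Remark~\ref{rmk:field-root-calc} and the agreement of~\eqref{eqn:coroot-formula-pre} with~\eqref{eqn:coroot-formula}. Where you genuinely diverge is part~(2): the paper proves it by running the same geometric argument over an algebraically closed field of characteristic $2$, where the root system of $(\bG^A)^\circ_{\mathrm{red}}$ is $\Roots_2^{(A)}$ and the coroots are given by the characteristic-$2$ case of~\eqref{eqn:coroot-formula-pre}; you instead deduce (2) purely combinatorially from (1) via the rescaling $\gamma \mapsto 2\gamma$, $\gamma^\vee \mapsto \tfrac{1}{2}\gamma^\vee$ on multipliable roots, justified by the identity $\sum_{\alpha \in E'} \alpha^\vee = \sum_{\alpha \in E''} \alpha^\vee$ inside the simply-laced $\mathsf{A}_{2n}$ components (so that $(2\gamma)^\vee = \tfrac12 \gamma^\vee$ lies in $(M^\vee)^A$ and $s_{2\gamma}=s_\gamma$). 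Both routes work. The paper's buys the extra geometric information that $\Roots_2^{(A)}$ is literally the root system of the characteristic-$2$ fixed-point group (which is what is used, e.g., in Example~\ref{ex:sl_odd}\eqref{ex:sl_odd.2} and in the discussion of~\S\ref{ss:root-data}), while yours is independent of characteristic-$2$ geometry and stays at the level of root-datum combinatorics; the price is that you must verify by hand that the rescaling preserves the axioms and sends $\Delta^{(A)}$ to a basis. For part~(3) you and the paper do essentially the same thing (the paper cites the $\mathsf{B}_n \cup \mathsf{C}_n = \mathsf{BC}_n$ observation; you check the axioms directly), and your observation that the reflections preserve $\Roots^{(A)}$ because they come from $(W_\Phi)^A$ acting on $M$ and commuting with $A$ is the right justification.

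Two small points worth making explicit if you write this up: (a) in part~(2), ``carries the basis $\Delta^{(A)}$ to $\Delta^{(A)}_2$'' uses the standard fact that a multipliable simple root occurs with even coefficient in every nonmultipliable root (equivalently, that the indecomposable elements of $\Roots^{(A)}_{2,+}$ are obtained from $\Delta^{(A)}$ by doubling its multipliable members) --- easy to check on $\mathsf{BC}$-type systems, and at the same level of ``easily seen'' as the paper's own assertion about bases; (b) the stability of the multipliable/divisible subsets under $(W_\Phi)^A$, which you invoke implicitly in (2) and explicitly in (3), follows from linearity of the action on $M_A/(M_A)_{\mathrm{tor}}$ together with preservation of $\Roots^{(A)}$, exactly as you indicate.
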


In the rest of this subsection, we briefly explain how this statement can be recovered from the analysis in~\S\ref{ss:fixed-pts-alg-closed-fields}. We first consider the first two parts. Assume that $S$ is the spectrum of an algebraically closed field $\bk$, and consider the pinned reductive group scheme $\bG$ over $S$ with root datum $\Phi$. By Proposition~\ref{prop:fix-pts-fields} and Remark~\ref{rmk:field-root-calc}, $(\bG^A)^\circ_{\mathrm{red}}$ is a connected reductive algebraic group, with maximal torus $(\bT^A)^\circ_{\mathrm{red}}$ (whose lattice of characters identifies with $M_A/(M_A)_{\mathrm{tor}}$), and its root system is $\Roots_1^{(A)}$ if $\mathrm{char}(\bk)\neq 2$, and $\Roots_2^{(A)}$ if $\mathrm{char}(\bk)= 2$. 
Remark~\ref{rmk:field-root-calc} also describes its coroots: they are the cocharacters constructed in Step~3 of the proof of Proposition~\ref{prop:fix-pts-fields}, in~\eqref{eqn:coroot-formula-pre}.  Note that that formula agrees with~\eqref{eqn:coroot-formula}.

In each case it is easily seen that $\Delta^{(A)}$ is a basis, and it follows from Proposition~\ref{prop:fix-pts-fields}\eqref{it:fields-Weyl-gp} that the corresponding Weyl group is $(W_\Phi)^A$.

This justifies the first two cases in Proposition~\ref{prop:root.syst.fix.pts}. 
The third case follows, using the standard observation that a union of root systems of types $\mathsf{B}_n$ and $\mathsf{C}_n$ produces a nonreduced root system of type $\mathsf{BC}_n$.

\begin{rmk}
In the setting of Corollary~\ref{cor:connected-fixed-pts-reductive}, from the pinning of $\bG$ one can obtain a pinning of $(\bG^A)^\circ$, with associated root datum $(M_A / (M_A)_{\mathrm{tor}},\Roots_1^{(A)}, (M^\vee)^A,\Roots_1^{(A),\vee})$.
\end{rmk}

\subsection{Weyl group}
\label{ss:Weyl-gp}

In this subsection we prove some results on the interplay between the maximal torus $\bT$, its normalizer $\bN$, the Weyl group $\bW$, and fixed points.

Let us consider once again a pinned reductive group scheme~\eqref{eqn:pinned-gp}
over $S$, and our group $A$ which acts on $\bG$ by pinned automorphisms. As in~\S\ref{ss:fixed-pts-alg-closed-fields} (but now over an arbitrary base), we denote by $\bN$ the normalizer of $\bT$ in $\bG$. By~\cite[Proposition~2.1.2]{conrad}, $\bN$ is a smooth subgroup scheme of $\bG$. Denote by $\lambda$ the sum of the positive coroots in $\Roots^\vee$; then $\lambda$ defines a cocharacter $\bG_{\mathrm{m},S} \to \bT$, which is $A$-invariant and hence takes values in $\bT^A$. As explained e.g.~in~\cite[Theorem~5.1.13]{conrad}, $\bT$ coincides with the centralizer of $\lambda$; in particular, $\bT$ is its own centralizer in $\bG$.
The Weyl group
$\bW$ is the quotient sheaf $\bN/\bT$ for the fppf topology. By~\cite[Proposition~5.1.6]{conrad}, $\bW$ is representable by the constant group scheme $(W_\Phi)_S$ over $S$, where $W_\Phi$ is the Weyl group of the root datum $\Phi=(M,\Roots, M^\vee, \Roots^\vee)$.

\begin{lem}
\label{lem:centralizer.norm.torus.weyl.gp}
	The following properties hold:
\begin{enumerate}
\item 
\label{it:Weyl-group-1}
the centralizer of $\bT^A$ in $\bG$ is representable by $\bT$; in particular, the centralizer of $\bT^A$ in $\bG^A$ is representable by the closed subgroup scheme $\bT^A$;
\item
\label{it:Weyl-group-2}
the normalizer of $\bT^A$ in $\bG$ is contained in $\bN$; in particular,
the normalizer of $\bT^A$ in $\bG^A$ is representable by the closed subgroup scheme $\bN^A$;
\item
\label{it:Weyl-group-3}
the quotient sheaf $\bN^A/\bT^A$ is representable by the constant $S$-group scheme $((W_\Phi)^A)_S$.
	\end{enumerate}
\end{lem}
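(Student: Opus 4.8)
The plan is to establish the three claims in order, the only real input being that $\bT$ is the centralizer of the cocharacter $\lambda$ (sum of the positive coroots). For~\eqref{it:Weyl-group-1}, note first that $\lambda$ is $A$-invariant, so it factors through $\bT^A$; hence any section of $\bG$ centralizing $\bT^A$ also centralizes the image of $\lambda$, and therefore lies in $Z_\bG(\lambda)=\bT$ (see~\cite[Theorem~5.1.13]{conrad}). Since $\bT$ is commutative the reverse inclusion is obvious, so $Z_\bG(\bT^A)=\bT$ as functors. Intersecting with $\bG^A$ then gives $Z_{\bG^A}(\bT^A)=\bT\cap\bG^A=\bT^A$, which is a closed subgroup scheme of $\bG^A$ by Lemma~\ref{lem:fixed-pts-properties}\eqref{it:fixed-pts-immersion}.

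For~\eqref{it:Weyl-group-2}, let $Y$ be an $S$-scheme and $g\in N_\bG(\bT^A)(Y)$. Conjugation by $g$ is an automorphism of $\bG_Y$ stabilizing $\bT^A_Y=(\bT_Y)^A$, so it stabilizes the centralizer $Z_{\bG_Y}(\bT^A_Y)$, which equals $\bT_Y$ by~\eqref{it:Weyl-group-1} (the identification $Z_\bG(\bT^A)=\bT$ being functorial, hence stable under base change). Thus $g$ normalizes $\bT_Y$, i.e.~$g\in\bN(Y)$, proving $N_\bG(\bT^A)\subseteq\bN$. It follows that $N_{\bG^A}(\bT^A)\subseteq\bG^A\cap\bN=\bN^A$; conversely, for $g\in\bN^A(Y)$ conjugation by $g$ preserves $\bT_Y$ and commutes with the $A$-action (because $g$ is $A$-fixed), hence preserves $\bT^A_Y$. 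So $N_{\bG^A}(\bT^A)=\bN^A$, again a closed subgroup scheme by Lemma~\ref{lem:fixed-pts-properties}\eqref{it:fixed-pts-immersion}.

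For~\eqref{it:Weyl-group-3}, I would start from the $A$-equivariant quotient map $\bN\to\bN/\bT=(W_\Phi)_S$ (see~\cite[Proposition~5.1.6]{conrad}), whose restriction to $A$-fixed points is a homomorphism of $S$-group schemes $\pi\colon\bN^A\to\bigl((W_\Phi)_S\bigr)^A=\bigl((W_\Phi)^A\bigr)_S$ with kernel $\bN^A\cap\bT=\bT^A$. It then remains to see that $\pi$ is an epimorphism of fppf sheaves, and since any section of $\bigl((W_\Phi)^A\bigr)_S$ is Zariski-locally constant, it is enough to lift each $w\in(W_\Phi)^A$ to $\bN^A(S)$. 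By~\cite[Corollaire~3.5]{hee} the group $(W_\Phi)^A$ is a Coxeter group whose Coxeter generators are the reflections attached to the $A$-orbits in $\Delta$, and for each such orbit the associated twisted $\mathrm{SL}_2$-map of~\S\ref{ss:SL2-maps} carries the standard monomial matrix of $\mathrm{SL}_{2,S}$, resp.~of $(\mathrm{SL}_{3,S})^{\Z/2\Z}$, to a representative in $\bN^A(S)$ of the corresponding generator; this is exactly the computation performed in Step~8 of the proof of Proposition~\ref{prop:fix-pts-fields}, and the same matrices work verbatim over an arbitrary base. Writing an arbitrary $w$ as a product of these generators and multiplying the chosen representatives yields a lift $\dot w\in\bN^A(S)$ of $w$. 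Hence the induced map $\bN^A/\bT^A\to\bigl((W_\Phi)^A\bigr)_S$ is both a monomorphism and an epimorphism of fppf sheaves, so it is an isomorphism, and in particular the quotient sheaf is representable by the asserted constant group scheme.

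The steps for~\eqref{it:Weyl-group-1} and~\eqref{it:Weyl-group-2} are essentially formal once one is careful that the centralizer identity commutes with base change, so I expect the real work to lie in~\eqref{it:Weyl-group-3}: exhibiting \emph{global} representatives in $\bN^A(S)$, which forces one to combine the Coxeter presentation of $(W_\Phi)^A$ from~\cite{hee} with the twisted $\mathrm{SL}_2$-maps of~\S\ref{ss:SL2-maps}.
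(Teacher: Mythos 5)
Your proposal is correct and follows essentially the same route as the paper: part \eqref{it:Weyl-group-1} via the $A$-invariant cocharacter $\lambda$ whose centralizer is $\bT$, part \eqref{it:Weyl-group-2} by noting that the normalizer of $\bT^A$ must preserve its centralizer $\bT$, and part \eqref{it:Weyl-group-3} by the injection $\bN^A/\bT^A \hookrightarrow ((W_\Phi)^A)_S$ together with surjectivity obtained from the Coxeter generators of $(W_\Phi)^A$ (via~\cite[Corollaire~3.5]{hee}) and global representatives coming from the monomial matrices under the twisted $\mathrm{SL}_2$-maps, exactly as in Step~8 of the proof of Proposition~\ref{prop:fix-pts-fields}. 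Your added care about base change of the centralizer identity and the reduction to lifting constant sections are correct elaborations of points the paper leaves implicit.
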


\begin{proof}
\eqref{it:Weyl-group-1}
As explained above, the cocharacter $\lambda$ takes values in $\bT^A$; its centralizer in $\bG$ (i.e.~$\bT$) therefore contains the centralizer of $\bT^A$. We deduce that $\mathrm{Z}_{\bG}(\bT^A)=\bT$, and then that $\mathrm{Z}_{\bG^A}(\bT^A)=\bT^A$.

\eqref{it:Weyl-group-2} 
The normalizer $\mathrm{N}_{\bG}(\bT^A)$ of $\bT^A$ in $\bG$ must preserve the centralizer $\mathrm{Z}_{\bG}(\bT^A)=\bT$.
We deduce an inclusion $\mathrm{N}_{\bG}(\bT^A) \subset \bN$. Intersecting with $\bG^A$, it follows that $\mathrm{N}_{\bG^A}(\bT^A) \subset \bN^A$. The opposite inclusion is clear.

\eqref{it:Weyl-group-3} 
The quotient sheaf $\bN^A/\bT^A$ injects into $\bW^A = ((W_\Phi)^A)_S$ by definition. 
The proof of surjectivity is similarly to the proof of surjectivity of~\eqref{eqn:NA-WA}: either by~\cite[Corollaire~3.5]{hee} or because $\bW^A$ is the Weyl group of the root datum $(M_A / (M_A)_{\mathrm{tor}},\Roots^{(A)}, (M^\vee)^A,\Roots^{(A),\vee})$ (see Lemma~\ref{prop:root.syst.fix.pts}), this group is generated by the simple reflections associated with equivalence classes in $\Roots_+$ which intersect $\Delta$. If $E$ is such an equivalence class of type (i$'$), resp.~(ii$'$), then 
the corresponding reflection is the image of 
$\imath_E \left( \begin{smallmatrix} 0 & 1 \\ -1 & 0 \end{smallmatrix} \right)$, resp.~$\imath_E \left( \begin{smallmatrix} 0 & 0 & 1 \\ 0 & -1 & 0 \\ 1 & 0 & 0 \end{smallmatrix} \right)$.
\end{proof}
	 
\subsection{Parabolic and Levi subgroups}
\label{ss:parabolic-Levi}

Let us consider once again a pinned reductive group scheme~\eqref{eqn:pinned-gp}
over $S$, and our group $A$ which acts on $\bG$ by pinned automorphisms.
Recall that given an $S$-scheme $X$ which is affine over $S$ and endowed with an action of a diagonalizable group scheme $\Diag_S(N)$, following~\cite{mayeux}, for any submonoid $Q \subset N$ we have a closed subscheme $X^Q \subset X$ called the attractor scheme associated with $Q$. (This construction generalizes the classical notion of attractor for an action of $\bG_{\mathrm{m}}$ used e.g.~in~\cite{CGP10,conrad}.)
In this subsection we will consider this construction in the case of the action of $\bT=\Diag_S(M)$, resp.~$\bT^A = \Diag_S(M_A)$, on $X=\bG$, resp.~$X=\bG^A$.

Recall the root datum 
\[
(M_A / (M_A)_{\mathrm{tor}},\Roots^{(A)}, (M^\vee)^A,\Roots^{(A),\vee})
\]
considered in Proposition~\ref{prop:root.syst.fix.pts}\eqref{it:root-data-3}, and its basis $\Delta^{(A)}$. We have a canonical identification $\Delta/A \simto \Delta^{(A)}$, and therefore a canonical bijection between subsets of $\Delta^{(A)}$ and $A$-stable subsets of $\Delta$. Consider an $A$-stable subset $\Gamma \subset \Delta$, and denote by $\Gamma^{(A)} \subset \Delta^{(A)}$ its image in $\Delta^{(A)}$. To $\Gamma$ we can associate a parabolic subgroup $\bP$ of $\bG$, whose Lie algebra is the sum of $\Lie(\bB)$ and the root subspaces in $\Lie(\bG)$ associated with the roots which belong to $\Z\Gamma = \sum_{\gamma \in \Gamma} \Z \cdot \gamma$. Following~\cite[Example~5.2.2]{conrad}, this subgroup scheme can be realized as the attractor subscheme associated with the conjugation action of $\bG_{\mathrm{m},S}$ on $\bG$ via a cocharacter $\mu \in M^\vee$ which satisfies
\[
\begin{cases}
\langle \mu, \alpha^\vee \rangle > 0 & \text{if $\alpha \in \Delta \smallsetminus \Gamma$;} \\
\langle \mu, \alpha^\vee \rangle = 0 & \text{if $\alpha \in \Gamma$.}
\end{cases}
\]
We also have the Levi factor $\bM$ of $\bP$ containing $\bT$, which can be realized as the fixed point subscheme associated with the same action of $\bG_{\mathrm{m},S}$, see~\cite[Proposition~5.4.5 and its proof]{conrad}, and its unipotent radical $\bU_{\bP}$, see~\cite[Corollary~5.2.5]{conrad}.
The theory of~\cite{mayeux} allows us to make this construction more canonical, by suppressing the choice of the cocharacter $\mu$. Namely,
denote by $N_\Gamma$ the submonoid of $M$ generated by $\Delta \sqcup (-\Gamma)$. The following claim is an immediate generalization of~\cite[Proposition~7.3]{mayeux}. 

\begin{lem}
\label{lem:parabolic-Levi-attractor}
We have
\[
\bP = \bG^{N_\Gamma}, \quad \bM = \bG^{\Z\Gamma}.
\]
\end{lem}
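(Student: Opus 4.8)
The plan is to deduce both equalities from the description of $\bP$ and $\bM$ via a cocharacter recalled just before the lemma, together with the monotonicity of the attractor construction of~\cite{mayeux} in the monoid variable; throughout I would use that $\bG$ is affine over $S$, so that all the attractor functors in sight are representable by closed subschemes. First I would take the cocharacter $\mu \in M^\vee$ from that discussion, regard it as a homomorphism of monoids $M \to \Z$, and set $Q_\mu := \mu^{-1}(\Z_{\geq 0})$ and $P_\mu := \ker(\mu)$. Since $\mu$ is also a homomorphism of diagonalizable $S$-group schemes $\bT = \Diag_S(M) \to \Gm$, through which the conjugation action of $\Gm$ on $\bG$ factors, the compatibility of the attractor construction with such homomorphisms (passing from a submonoid of $\Z$ to its preimage in $M$) gives $\bG^{Q_\mu} = \bP$ and $\bG^{P_\mu} = \bM$, where the left-hand attractors are now taken for the $\bT$-action.

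Next I would record the inclusions of submonoids $N_\Gamma \subseteq Q_\mu$ and $\Z\Gamma \subseteq P_\mu$: by the defining property of $\mu$, each generator of $N_\Gamma$ — an element of $\Delta$, or $-\gamma$ for $\gamma \in \Gamma$ — lies in $Q_\mu$, and each generator $\pm\gamma$ ($\gamma \in \Gamma$) of $\Z\Gamma$ lies in $\ker(\mu)$. Since the attractor construction is functorial in the submonoid, a smaller submonoid yielding a smaller closed subscheme, this produces closed immersions
\[
\bG^{N_\Gamma} \hookrightarrow \bG^{Q_\mu} = \bP, \qquad \bG^{\Z\Gamma} \hookrightarrow \bG^{P_\mu} = \bM.
\]

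For the reverse inclusions I would use two structural facts from~\cite{mayeux}: for any submonoid $Q \subseteq M$, the attractor $\bG^{Q}$ is a closed subgroup scheme of $\bG$ containing $\bT$, and it contains the root subgroup $\bU_\alpha$ whenever $\alpha \in Q$ (the latter via the dynamical description, applied to $\bU_\alpha \cong \Ga$ with $\bT$ acting through $\alpha$). A short root-combinatorics computation, using that $\Delta$ is a basis of $\Roots$, shows that $\Roots \cap \Z\Gamma$ is the subsystem of roots spanned by $\Gamma$, and that $\Roots \cap N_\Gamma = \Roots_+ \cup \{\alpha \in \Roots \cap \Z\Gamma \mid -\alpha \in \Roots_+\}$; these are exactly the root sets of $\bM$ and of $\bP$. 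Consequently $\bT$ and every root subgroup of $\bP$ (resp.\ of $\bM$) lies in $\bG^{N_\Gamma}$ (resp.\ in $\bG^{\Z\Gamma}$), hence so does the big cell of $\bP$ (resp.\ of $\bM$), which is obtained by multiplying $\bT$ and those root subgroups. As $\bP$ and $\bM$ are smooth with geometrically connected fibers, their big cells are schematically dense, so the closed subschemes $\bG^{N_\Gamma} \subseteq \bP$ and $\bG^{\Z\Gamma} \subseteq \bM$ must be all of $\bP$ and $\bM$ respectively, which is the assertion.

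I expect the only genuine work to lie in locating (and, if needed, reproving over a general base $S$) the two facts imported from~\cite{mayeux} — that $\bG^{Q}$ is a closed subgroup scheme and that it contains precisely the root subgroups indexed by $Q$ — together with the elementary bookkeeping identifying $\Roots \cap N_\Gamma$ and $\Roots \cap \Z\Gamma$ with the root sets of $\bP$ and $\bM$; everything else is either the recalled cocharacter description or a formal consequence of functoriality of attractors. This is presumably why the statement is flagged as an immediate generalization of~\cite[Proposition~7.3]{mayeux}.
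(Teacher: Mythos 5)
Your proposal is correct, and its first half is exactly the paper's argument: the paper also views $\mu$ as a morphism $M \to \Z$, uses the same compatibility statement \cite[Lemma~5.6]{mayeux} to identify $\bP$ (resp.\ $\bM$) with the attractor $\bG^{\mu^{-1}(\Z_{\geq 0})}$ (resp.\ $\bG^{\mu^{-1}(0)}$) for the $\bT$-action, and then uses monotonicity in the submonoid (\cite[Remark~3.5]{mayeux}) together with $N_\Gamma \subset \mu^{-1}(\Z_{\geq 0})$ and $\Z\Gamma \subset \ker \mu$ to get $\bG^{N_\Gamma} \subset \bP$ and $\bG^{\Z\Gamma} \subset \bM$. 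Where you differ is the reverse inclusion: the paper invokes the decomposition $\bP = \bM \ltimes \bU_{\bP}$ and checks that each factor lies in $\bG^{N_\Gamma}$ (which comes down to the observation that all $\bT$-weights of $\scO(\bM)$ and $\scO(\bU_{\bP})$, hence of $\scO(\bP)$, lie in the relevant monoid, plus functoriality for the closed immersion $\bP \hookrightarrow \bG$), whereas you generate $\bP$ from $\bT$ and its root subgroups inside the closed subgroup scheme $\bG^{N_\Gamma}$ and then conclude by schematic density of the big cell of $\bP$. Your route is sound, but it costs the two extra inputs you yourself flag -- the subgroup-scheme property of monoid attractors (needed to multiply the root groups) and the fiberwise criterion for schematic density over an arbitrary base (EGA~IV, 11.10, since here $S$ is not a field) -- while the paper's finish avoids both; your identification of $\Roots \cap N_\Gamma$ and $\Roots \cap \Z\Gamma$ with the root sets of $\bP$ and $\bM$ is correct, and you only use the ``if'' direction of your claim that $\bG^{Q}$ contains the root groups indexed by $Q$, so no gap results.
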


\begin{proof}
The proof is the same as that of~\cite[Proposition~7.3]{mayeux}.
Namely, choose $\mu \in M^\vee$ as above, which we now consider as a morphism $M \to \Z$. The induced morphism $\bG_{\mathrm{m},S} = \Diag_S(\Z) \to \Diag_S(M)=\bT$ is precisely $\mu$ seen as a cocharacter. By~\cite[Lemma~5.6]{mayeux} we have $\bG^{\mu^{-1}(\Z_{\geq 0})} = \bG^{\Z_{\geq 0}}$ where on the left-hand side we regard $\bG$ with the action of $\bT$, and on the right-hand side we regard $\bG$ with the action of $\bG_{\mathrm{m},S}$ via $\mu$. As explained above the right-hand side is known to coincide with $\bP$. Using the fact that $N_\Gamma \subset \mu^{-1}(\Z_{\geq 0})$, in view of~\cite[Remark~3.5]{mayeux} we deduce that $\bG^{N_\Gamma} \subset \bP$. On the other hand we have $\bP = \bM \ltimes \bU_\bP$. It is easily seen that both $\bM$ and $\bU_\bP$ are contained in $\bG^{N_\Gamma}$, which implies that $\bP \subset \bG^{N_\Gamma}$ and finishes the proof of the first claim. The proof of the second one is similar.
\end{proof}

Now, let us consider the submonoid $N_\Gamma^{(A)}$ of $M_A$ generated by $\Delta^{(A)} \sqcup (-\Gamma^{(A)})$ and the subgroup $\Z\Gamma^{(A)}$ generated by $\Gamma^{(A)}$.

\begin{prop}
	The natural morphisms 
	\[
	\bP^A \to (\bG^A)^{N_\Gamma^{(A)}} \quad \text{ and } \quad  \bM^A \to (\bG^A)^{\Z\Gamma^{(A)}}
	\]
	are isomorphisms. 
\end{prop}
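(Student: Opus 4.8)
The plan is to establish the two asserted identities of closed subschemes of $\bG^A$, namely $\bP^A = (\bG^A)^{N_\Gamma^{(A)}}$ and $\bM^A = (\bG^A)^{\Z\Gamma^{(A)}}$, by running the argument of Lemma~\ref{lem:parabolic-Levi-attractor} $A$-equivariantly. Since attractor schemes and fixed-point schemes both commute with base change, and the claim is local on $S$, the first step is to reduce to $S = \Spec(k)$ affine; write $\scO(\bG) = \bigoplus_{m \in M} \scO(\bG)_m$ for the $M$-grading coming from the conjugation action of $\bT = \Diag_S(M)$.

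The argument rests on two essentially formal observations. The first: for a submonoid $Q \subseteq M$ and a closed $\bT$-stable subscheme $Z = \Spec(\scO(\bG)/J) \subseteq \bG$, one has $Z \subseteq \bG^Q$ if and only if the $M$-grading induced on $\scO(\bG)/J$ is supported on $Q$ (and likewise for $\bG^A$ with its $\bT^A = \Diag_S(M_A)$-action). Now the $\bT^A$-action on $\bG^A$ is the restriction of the $\bT$-action along $\bT^A \hookrightarrow \bT$, and $A$ acts trivially on $\bT^A$, so the $A$-action on $\scO(\bG)$ commutes with the $\bT^A$-action; pushing gradings forward along $M \to M_A$ then shows that if an $A$-stable, $\bT$-stable closed subscheme $Z \subseteq \bG$ lies in $\bG^Q$, then $Z^A$ lies in $(\bG^A)^{Q^{(A)}}$, where $Q^{(A)}$ is the image of $Q$ in $M_A$. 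The second: attractor schemes commute with $\bT$-equivariant (hence $\Gm$-equivariant) closed immersions; together with Lemma~\ref{lem:fixed-pts-properties}\eqref{it:fixed-pts-immersion} this gives, for any cocharacter $\mu$ of $\bT^A$ acting by conjugation, identifications $(\bG^A)^{\Z_{\geq 0}} = \bG^A \cap \bG^{\Z_{\geq 0}}$ of attractors and $(\bG^A)^{\{0\}} = \bG^A \cap \bG^{\{0\}}$ of fixed points for this $\Gm$-action.

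Next I would construct a suitable $A$-invariant cocharacter. The proof of Lemma~\ref{lem:parabolic-Levi-attractor} uses a cocharacter $\mu \in M^\vee$, characterized by sign conditions on its pairings with the simple coroots and vanishing on $\Z\Gamma$, for which $\bP = \bG^{\Z_{\geq 0}}$ (the attractor for the $\Gm$-action via $\mu$), $\bM = \bG^{\{0\}}$ (the corresponding fixed points), $N_\Gamma \subseteq \mu^{-1}(\Z_{\geq 0})$ and $\Z\Gamma \subseteq \mu^{-1}(0)$ as submonoids of $M$. Since $\Gamma$, hence also $\Delta \smallsetminus \Gamma$ and $\Roots_+ \smallsetminus \Z\Gamma$, is $A$-stable, replacing $\mu$ by $\sum_{a \in \bar A} a \cdot \mu$, where $\bar A$ denotes the (finite) image of $A$ in $\mathrm{Aut}(\bG,\bT,M,\Roots,\Delta,X)$, produces a cocharacter with the same four properties which is moreover $A$-invariant, hence lies in $(M^\vee)^A$, the cocharacter lattice of $\bT^A$. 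Applying $M \to M_A$, we also obtain the submonoid inclusions $N_\Gamma^{(A)} \subseteq \mu^{-1}(\Z_{\geq 0})$ and $\Z\Gamma^{(A)} \subseteq \mu^{-1}(0)$ in $M_A$.

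Finally I would assemble the proof for the parabolic (the Levi case being identical with $\{0\}$ in place of $\Z_{\geq 0}$, fixed points in place of attractors, and $\Z\Gamma$ in place of $N_\Gamma$). The inclusion $\bP^A \subseteq (\bG^A)^{N_\Gamma^{(A)}}$ is the first observation applied to $Z = \bP = \bG^{N_\Gamma}$ (Lemma~\ref{lem:parabolic-Levi-attractor}); this is also what makes the natural morphism of the statement well defined. For the reverse inclusion, $N_\Gamma^{(A)} \subseteq \mu^{-1}(\Z_{\geq 0})$ yields $(\bG^A)^{N_\Gamma^{(A)}} \subseteq (\bG^A)^{\mu^{-1}(\Z_{\geq 0})}$, which by Mayeux's comparison lemma (\cite[Lemma~5.6]{mayeux}, already used in Lemma~\ref{lem:parabolic-Levi-attractor}) equals $(\bG^A)^{\Z_{\geq 0}}$, which by the second observation equals $\bG^A \cap \bG^{\Z_{\geq 0}} = \bG^A \cap \bP = \bP^A$. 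Hence the natural morphism is an isomorphism. The argument is largely bookkeeping; the point requiring genuine care is the construction of the $A$-invariant $\mu$ retaining all the positivity and vanishing properties, together with the (routine but needed) observation that nothing above uses reductivity of $\bP^A$ or $\bM^A$ — which fails in general — only that $\bG^A$ is affine over $S$, so that the attractor formalism of \cite{mayeux} applies verbatim.
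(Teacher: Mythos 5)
Your overall strategy is the same as the paper's: choose an $A$-invariant cocharacter $\mu$ as in Lemma~\ref{lem:parabolic-Levi-attractor}, use~\cite[Lemma~5.6]{mayeux} together with the compatibility of attractors with ($\Gm$-equivariant) closed immersions and with $A$-fixed points to get $(\bG^A)^{N_\Gamma^{(A)}} \subset \bP^A$, and prove the reverse containment by a support argument. For that reverse containment the paper argues via the functor of points of the attractor (a point of $\bP^A$ yields an $A$-invariant morphism $\mathrm{A}_S(N_\Gamma) \to \bG$, which one precomposes with $\mathrm{A}_S(N_\Gamma^{(A)}) \to (\mathrm{A}_S(N_\Gamma))^A$), whereas you argue with the $M_A$-grading of coordinate rings obtained by pushing the $M$-grading forward along $M \to M_A$; these are equivalent formulations of the same content, and your version is fine provided you record (as you implicitly do) that for an affine scheme with $\Diag_S(N)$-action the attractor $X^Q$ is cut out by the ideal generated by the graded pieces of degree outside $Q$, so that a $\bT$-stable closed subscheme lies in $X^Q$ exactly when its coordinate ring is supported on $Q$.

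There is, however, one genuine gap: you justify the existence of the $A$-invariant cocharacter by averaging $\mu$ over ``the (finite) image $\overline{A}$ of $A$ in $\mathrm{Aut}(\bG,\bT,M,\Roots,\Delta,X)$''. The paper imposes no finiteness hypothesis on $A$, and this image need not be finite: the automorphism group of the based root datum contains the full automorphism group of the ``central'' part of $M$, so already for $\bG$ with a central torus of rank $\geq 2$ (e.g.\ $\Gm^2 \times \mathrm{SL}_{2}$) it contains a copy of $\mathrm{GL}_2(\Z)$, and the sum $\sum_{a \in \overline{A}} a \cdot \mu$ is then meaningless. This is exactly why the paper's parenthetical remark first reduces to the semisimple simply connected situation: on $M_\sconn^\vee = \Z\Roots^\vee$ the action of $A$ factors through the (finite) group of based automorphisms of the root system, so averaging there is legitimate, and one then pushes the result into $M^\vee$ along $\Z\Roots^\vee \subset M^\vee$. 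Alternatively, you can avoid averaging altogether by taking the explicit $A$-invariant choice $\mu = 2\rho^\vee - 2\rho_\Gamma^\vee$, the difference of the sums of positive coroots of $\Roots$ and of the subsystem generated by $\Gamma$: it lies in $\Z\Roots^\vee \subset M^\vee$, is $A$-invariant because $A$ preserves $\Roots_+$ and $\Gamma$, pairs to $0$ with every $\alpha \in \Gamma$ and to a strictly positive integer with every $\alpha \in \Delta \smallsetminus \Gamma$. With either repair, the rest of your argument goes through.
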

 
 \begin{proof}
 The proof is similar to that of Lemma~\ref{lem:parabolic-Levi-attractor}; we explain the first case, and leave the easy modifications to treat the second case to the reader. The cocharacter $\mu$ considered above can be chosen to be $A$-invariant. 
(In fact one can reduce the situation to that of a semisimple and simply connected group, and then average any cocharacter satisfying our conditions along the action of $A$.)
 Then $\mu$ takes values in $\bT^A$, or in other words the associated morphism $M \to \Z$ (still denoted $\mu$) factors through a morphism $\overline{\mu} : M_A \to \Z$. Since the action of $\bG_{\mathrm{m},S}$ via $\mu$ commutes with the action of $A$ we have
 \[
 \bP^A = (\bG^{\Z_{\geq 0}})^A = (\bG^A)^{\Z_{\geq 0}}.
 \]
 By~\cite[Lemma~5.6]{mayeux} we have $(\bG^A)^{\Z_{\geq 0}} = (\bG^A)^{\overline{\mu}^{-1}(\Z_{\geq 0})}$. Since $N_\Gamma^{(A)} \subset \overline{\mu}^{-1}(\Z_{\geq 0})$, we deduce that
 \[
 (\bG^A)^{N_\Gamma^{(A)}} \subset \bP^A.
 \]
 On the other hand, if $T$ is an $S$-scheme and $x : T \to \bP^A$ is a $T$-point of $\bP^A$, then since $\bP=\bG^{N_\Gamma}$ we have a canonical morphism
 \[
 x' : \mathrm{A}_S(N_\Gamma) \to \bG
 \]
 where the left-hand side is the $S$-scheme associated with the monoid $N_\Gamma$ following~\cite[\S 3]{mayeux}. This morphism is easily seen to be $A$-invariant; it therefore restricts to a morphism $x'' : (\mathrm{A}_S(N_\Gamma))^A \to \bG^A$. We also have a canonical morphism $N_\Gamma \to N_\Gamma^{(A)}$, which induces a morphism $\mathrm{A}_S(N_\Gamma^{(A)}) \to \mathrm{A}_S(N_\Gamma)$. This morphism is $A$-equivariant with respect to the trivial action on its domain, so it factors through a morphism $\mathrm{A}_S(N_\Gamma^{(A)}) \to (\mathrm{A}_S(N_\Gamma))^A$. Composing with $x''$ we obtain a morphism
 \[
 \mathrm{A}_S(N_\Gamma^{(A)}) \to \bG^A.
 \]
 This construction shows that $\bP^A \subset (\bG^A)^{N_\Gamma^{(A)}}$, which finishes the proof.  
 \end{proof}
  
 In particular, given $\gamma \in \Delta^{(A)}$, we can consider the parabolic subgroup and its Levi factor $\bM_\gamma \subset \bP_\gamma \subset \bG$ associated with the $A$-orbit in $\Delta$ corresponding to $\gamma$, and the corresponding fixed points $(\bM_\gamma)^A \subset \bG$. The following claim will be required in~\cite{alrr}.
 
 \begin{cor}
 \label{cor.generation.levis}
 The unique subgroup scheme of $\bG^A$ that contains $(\bM_\gamma)^A$ for every $\gamma \in \Delta^{(A)}$ is $\bG^A$ itself.
 \end{cor}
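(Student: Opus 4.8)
The plan is to prove the slightly stronger assertion that \emph{every} subgroup scheme $\bH \subseteq \bG^A$ which contains $(\bM_\gamma)^A$ for all $\gamma \in \Delta^{(A)}$ is already equal to $\bG^A$; this yields both existence (take $\bH = \bG^A$) and uniqueness at once. (We may and will assume $\Delta \neq \emptyset$, the contrary case being that of a torus.) The decisive point will be to show that any such $\bH$ contains the big cell $\bC^A$. Granting this, the multiplication morphism $f \colon \bC^A \times_S \bC^A \to \bG^A$ of Remark~\ref{rmk:f-epi}, which is an epimorphism, factors through the closed immersion $\bH \hookrightarrow \bG^A$, and this forces the latter to be an isomorphism.

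By the isomorphism~\eqref{eqn:decomp-CA}, getting $\bC^A \subseteq \bH$ amounts to checking $\bT^A \subseteq \bH$, $\bU^A \subseteq \bH$ and $(\bU^-)^A \subseteq \bH$; and by~\eqref{eqn:ua-product} together with its counterpart for $\bU^-$, it is enough to verify that $\bT^A$ and all the groups $(\bU_E)^A$ and $(\bU_{-E})^A$ lie in $\bH$, where $E$ runs over the equivalence classes for $\sim$ on $\Roots_+$. First I would handle the classes meeting $\Delta$: if $\gamma \in \Delta^{(A)}$ corresponds to the $A$-orbit $\Gamma_\gamma \subseteq \Delta$ and $E_\gamma$ denotes the equivalence class containing $\Gamma_\gamma$, then every root of $E_\gamma$ is a non-negative integral combination of $\Gamma_\gamma$ (immediate from the description of classes of types~(i$'$) and~(ii$'$)), so $\bU_{\pm E_\gamma} \subseteq \bM_\gamma$, and of course $\bT \subseteq \bM_\gamma$; applying $(-)^A$ gives $\bT^A \subseteq (\bM_\gamma)^A \subseteq \bH$ and $(\bU_{\pm E_\gamma})^A \subseteq (\bM_\gamma)^A \subseteq \bH$.

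To pass from the classes $E_\gamma$ meeting $\Delta$ to an arbitrary $E$, I would bring in the Weyl group. For $\delta \in \Delta^{(A)}$ the element $\dot s_\delta := \imath_{E_\delta}\!\left(\begin{smallmatrix} 0 & 1 \\ -1 & 0 \end{smallmatrix}\right)$, resp.\ $\imath_{E_\delta}\!\left(\begin{smallmatrix} 0 & 0 & 1 \\ 0 & -1 & 0 \\ 1 & 0 & 0 \end{smallmatrix}\right)$, according as $E_\delta$ is of type~(i$'$) or~(ii$'$), is an $S$-point of $(\bM_\delta)^A \subseteq \bH$ whose image in $\bW^A = \bN^A/\bT^A$ is the simple reflection $s_\delta$ (as in the proof of Lemma~\ref{lem:centralizer.norm.torus.weyl.gp}\eqref{it:Weyl-group-3}). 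Since $\bW^A = (W_\Phi)^A$ is generated by these simple reflections (Proposition~\ref{prop:root.syst.fix.pts}), multiplying the $\dot s_\delta$ along a reduced word produces, for every $w \in \bW^A$, a lift $\dot w \in \bN^A(S) \cap \bH(S)$. Now by the discussion of~\S\ref{ss:root-data} and Proposition~\ref{prop:root.syst.fix.pts}, the equivalence classes for $\sim$ together with their negatives form the reduced root system $\Roots_1^{(A)}$ with basis $\Delta^{(A)}$ and Weyl group $\bW^A$ acting in the natural way; since in any root system every root is conjugate under the Weyl group to a simple root, for each $E$ there exist $w \in \bW^A$ and $\delta \in \Delta^{(A)}$ with $w \cdot E_\delta = E$. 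As $\dot w$, being an $S$-point of $\bN$, conjugates $\bU_\alpha$ onto $\bU_{w\alpha}$ and $\bU_{E_\delta}$ is independent of the chosen order, it conjugates $\bU_{E_\delta}$ onto $\bU_E$; passing to $A$-fixed points we obtain $(\bU_E)^A = \dot w\,(\bU_{E_\delta})^A\,\dot w^{-1} \subseteq \bH$, and similarly $(\bU_{-E})^A = \dot w\,(\bU_{-E_\delta})^A\,\dot w^{-1} \subseteq \bH$. Hence $\bT^A, \bU^A, (\bU^-)^A \subseteq \bH$, so $\bC^A \subseteq \bH$, and the argument concludes as above.

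The step I expect to require the most care is this reduction to the classes meeting $\Delta$: it depends on having genuine $S$-point lifts $\dot w$ of arbitrary $w \in \bW^A$ that actually lie in $\bH(S)$ and conjugate the $\bU_E$'s as expected, and on the combinatorial input---furnished by~\S\ref{ss:root-data}, Proposition~\ref{prop:root.syst.fix.pts} and Proposition~\ref{prop:fix-pts-fields}---that $\bW^A$ is generated by the reflections indexed by $\Delta^{(A)}$ and acts on the equivalence classes exactly as the Weyl group acts on the roots of $\Roots_1^{(A)}$, so that every class is Weyl-conjugate to one meeting $\Delta$. Everything else (the product decomposition of $\bC^A$, functoriality of $(-)^A$, and the epimorphism property of $f$) is formal.
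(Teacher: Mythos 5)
Your proposal is correct and follows essentially the same route as the paper's own proof: you extract $\bT^A$, the subgroups $(\bU_{\pm E_\delta})^A$ and the simple-reflection representatives from the Levi subgroups $(\bM_\gamma)^A$, use the fact that every root of $\Roots_1^{(A)}$ is $\bW^A$-conjugate to an element of $\Delta^{(A)}$ to capture all $(\bU_E)^A$, deduce $\bC^A \subseteq \bH$ via~\eqref{eqn:decomp-CA} and~\eqref{eqn:ua-product}, and conclude with the epimorphism property of $\bC^A \times_S \bC^A \to \bG^A$ from Remark~\ref{rmk:f-epi}. The only difference is presentational (you treat the classes meeting $\Delta$ explicitly and build the lifts $\dot w$ as products along reduced words, where the paper simply says $\bH \supseteq \bN^A$), so there is nothing to correct.
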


\begin{proof}
Let $\bH \subset \bG^A$ be a subgroup scheme that contains all the subgroups $(\bM_\gamma)^A$ for $\gamma \in \Delta^{(A)}$. Then $\bH$ contains $\bT^A$, and also the elements in $\bN^A$ corresponding to the simple reflections in the Weyl group $\bW^A$ constructed in the proof of Lemma~\ref{lem:centralizer.norm.torus.weyl.gp}\eqref{it:Weyl-group-3}. Since these elements generate $\bW^A$, we deduce that $\bH$ contains $\bN^A$. Since any root in $\Roots^{(A)}_1$ is $\bW^A$-conjugate to a root in $\Delta^{(A)}$ (see~\S\ref{ss:root-data}), we then deduce that $\bH$ contains each subgroup $(\bU_E)^A$ with $E$ an equivalence class in $\Roots_+$, and therefore that it contains $\bC^A$ (see~\eqref{eqn:decomp-CA} and~\eqref{eqn:ua-product}). Finally,
since the multiplication morphism $\bC^A \times \bC^A \to \bG^A$ is an epimorphism (see Remark~\ref{rmk:f-epi}), this implies that $\bH=\bG^A$.
\end{proof}

\subsection{Center and isogenies}

Let us consider once again a pinned reductive group scheme~\eqref{eqn:pinned-gp}
over $S$, and our group $A$ which acts on $\bG$ by pinned automorphisms.
Recall the definition of the center of a group scheme; see~\cite[Definition~2.2.1]{conrad}. (By definition, this center is a sheaf of groups, which is not necessarily representable.)
In particular let $\bZ \subset \bG$ denote the center of $\bG$.
By~\cite[Theorem~3.3.4]{conrad}, $\bZ$ is representable by a diagonalizable group scheme; more explicitly we have
	\[ 
	\bZ= \bigcap_{\alpha \in \Delta} \ker(\alpha) = \Diag_S(M/\Z\Roots).
	\]
By functoriality, $\bZ$ is preserved by the $A$-action, so we may form its fixed-point scheme $\bZ^A$, which is again a diagonalizable group scheme: more explicitly, by Lemma~\ref{lem:torus_fix_pts_diag} we have
\begin{equation}
\label{eqn:ZA-Diag}
\bZ^A = \Diag_S((M/\Z\Roots)_A).
\end{equation}

\begin{lem}
	The closed subgroup scheme $\bZ^A$ represents the center of $\bG^A$.
\end{lem}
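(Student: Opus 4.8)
The plan is to show that the two subsheaves $\bZ^A$ and $\rmZ(\bG^A)$ of $\bG^A$ coincide. One inclusion is formal: $\bZ^A$ is contained in $\bZ=\rmZ(\bG)$, which centralizes all of $\bG$ and in particular the subgroup $\bG^A$, and $\bZ^A$ is contained in $\bG^A$; hence $\bZ^A\subset\rmZ(\bG^A)$. The real content is the reverse inclusion $\rmZ(\bG^A)\subset\bZ^A$.

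First I would reduce to the torus: $\bT^A$ is a subgroup scheme of $\bG^A$, so any section of $\rmZ(\bG^A)$ centralizes $\bT^A$ and therefore lies in $\rmZ_{\bG^A}(\bT^A)$, which equals $\bT^A$ by Lemma~\ref{lem:centralizer.norm.torus.weyl.gp}\eqref{it:Weyl-group-1}. Thus $\rmZ(\bG^A)\subset\bT^A$. Now fix an $S$-scheme $T$ and a point $z\in\rmZ(\bG^A)(T)$, viewed inside $\bT^A(T)$. By~\eqref{eqn:ZA-Diag} we have $\bZ^A=\Diag_S((M/\Z\Roots)_A)$, and $(M/\Z\Roots)_A=M_A/\Z\Roots^{(A)}$, where the image $\Z\Roots^{(A)}\subset M_A$ of $\Z\Roots$ is generated by the images $\bar\alpha$ of the simple roots $\alpha\in\Delta$. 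Writing $z$ as a homomorphism $M_A\to\Gm(T)$, it follows that $z\in\bZ^A(T)$ if and only if $\bar\alpha(z)=1$ for every $\alpha\in\Delta$; here $\bar\alpha(z)$ is the value at $z$ of the character $\bar\alpha$ of $\bT^A$, equivalently $\alpha(z)\in\Gm(T)$ computed by regarding $z$ as a point of $\bT$. So the proof reduces to showing $\bar\alpha(z)=1$ for each $\alpha\in\Delta$.

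Fix $\alpha\in\Delta$ and let $E$ be its equivalence class for the relation $\sim$ of \S\ref{ss:equiv-relation}. Since $z$ centralizes $\bG^A$ and $(\bU_E)^A$ is a subgroup scheme of $\bG^A$, conjugation by $z$ induces the identity automorphism of $(\bU_E)^A$. Using the explicit descriptions of $(\bU_E)^A$ — the isomorphism $\imath_E$ of~\eqref{eqn:root-subgp-type-i} if $E$ is of type (i$'$), and the isomorphism of~\eqref{eqn:root-subgp-type-ii} together with the computation of $(\mathrm{U}_{3,S})^{\Z/2\Z}$ in \S\ref{sss:sl3_action} if $E$ is of type (ii$'$), all compatible with the actions of $\bT^A$ and of the torus on the source — one checks that conjugation by $z$ sends a point of $(\bU_E)^A$ with ``$\bU_\alpha$-coordinate'' $s$ (i.e.\ the coordinate along the $\bU_\alpha$-factor of $\bU_E=\prod_{\beta\in E}\bU_\beta$, which in the type (ii$'$) case is the coordinate $x$ from \S\ref{sss:sl3_action}) to a point with $\bU_\alpha$-coordinate $\bar\alpha(z)\,s$. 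Here it matters that $\alpha$, being simple, is nonspecial, so that its image in $M_A$ governs the ``$x$-direction'' and not the ``$y$-direction''. Applying this to the tautological point, whose $\bU_\alpha$-coordinate is the coordinate function itself, the equality ``conjugation $=$ identity'' forces $\bar\alpha(z)=1$, since that coordinate function is a member of a $\Z$-basis of the coordinate ring of $(\bU_E)^A$ (in the type (ii$'$) case, $x$ lies in the basis $\{y^i,xy^i:i\geq 0\}$ of $\Z[x,y]/(x^2-2y)$). This completes the argument.

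The step I expect to be the main obstacle is this last one in residual characteristic $2$. For $E$ of type (ii$'$), the scheme $(\bU_E)^A\cong(\mathrm{U}_{3,S})^{\Z/2\Z}$ is non-reduced precisely when $2$ is a residual characteristic, and on the reduction $(\bU_E)^A_{\mathrm{red}}\cong\Ga$ the torus $\bT^A$ acts in characteristic $2$ through the character $2\bar\alpha$ rather than $\bar\alpha$ (the coordinate $x$ vanishes on $(\bU_E)^A_{\mathrm{red}}$ in characteristic $2$). Recovering the weight $\bar\alpha$, and hence the conclusion $\bar\alpha(z)=1$, therefore requires genuinely working with the full, possibly non-reduced, scheme $(\bU_E)^A$ and its coordinate $x$, not with its reduction.
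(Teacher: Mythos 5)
Your proof is correct and follows essentially the same route as the paper: the formal inclusion $\bZ^A\subset\rmZ(\bG^A)$, the reduction $\rmZ(\bG^A)\subset\bT^A$ via Lemma~\ref{lem:centralizer.norm.torus.weyl.gp}\eqref{it:Weyl-group-1}, and then the $\bT^A$-action on the subgroups $(\bU_E)^A$ to force the images of the simple roots to vanish on central points, identifying the result with $\Diag_S((M/\Z\Roots)_A)=\bZ^A$. The only difference is that you spell out the last step in coordinates (correctly handling the non-reduced type (ii$'$) case in residual characteristic $2$ by working with the full scheme $(\mathrm{U}_{3,S})^{\Z/2\Z}$ rather than its reduction), where the paper leaves this verification implicit.
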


\begin{proof}
Denote by $\bZ'$ the center of $\bG^A$.
	Since $\bZ$ is the center of $\bG$, we have $\bZ^A \subset \bZ'$. On the other hand, we know by Lemma~\ref{lem:centralizer.norm.torus.weyl.gp} that $\bT^A$ is its own centralizer, so $\bZ'$ is contained in $\bT^A$. Considering the $\bT^A$-action on each $(\bU_E)^A$,
	we obtain that
	\[ 
	\bZ' \subset \bigcap_{\alpha \in \Delta^{(A)}} \ker(\alpha),
	\]
	which in view of~\eqref{eqn:ZA-Diag} shows that $\bZ' \subset \bZ^A$, and hence that $\bZ'=\bZ^A$.
\end{proof}

Recall the group scheme $\bG_\sconn$ considered in~\S\ref{ss:reduction-scqs}, and denote by $\bZ_\sconn$ its center. The natural morphism $f : \bG_\sconn \to \bG$ restricts to a morphism $\bZ_\sconn \to \bZ$. It is a standard fact that the natural morphism
\[
\bG_{\sconn} \times_S^{\bZ_\sconn} \bZ \to \bG
\]
is an isomorphism, where the left-hand side is the (fppf) quotient of $\bG_{\sconn} \times_S \bZ$ by the action of $\bZ_{\sconn}$ defined by $z \cdot (g,h) = (gz^{-1}, f(z)h)$. In other words, we have an exact sequence of fppf sheaves of groups
\begin{equation}
\label{eqn:es-Gsc}
1 \to \bZ_{\sconn} \to \bG_{\sconn} \times_S \bZ \to \bG \to 1  
\end{equation}
where the first morphism is the natural antidiagonal embedding. The next lemma provides a version of this result for the group $\bG^A$, which will be used in~\cite{alrr}.

\begin{prop}
\label{prop:fib.prod.sc.cent}
	The natural map
\[
(\bG_{\sconn})^A \times_S^{(\bZ_{\sconn})^A} \bZ^A \to \bG^A
\]
is an isomorphism, where the left-hand side is the quotient of $(\bG_{\sconn})^A \times_S \bZ^A$ by the action of $(\bZ_{\sconn})^A$ induced by the action of $\bZ_{\sconn}$ on $\bG_{\sconn} \times_S \bZ$ considered above.
\end{prop}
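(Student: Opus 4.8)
The plan is to show that the natural morphism
\[
\psi\colon (\bG_\sconn)^A \times_S^{(\bZ_\sconn)^A} \bZ^A \to \bG^A
\]
is simultaneously a monomorphism and an epimorphism of fppf sheaves of groups, and hence an isomorphism. For the monomorphism, I would first apply the left-exact functor $(-)^A$ to the exact sequence~\eqref{eqn:es-Gsc}; combined with Lemma~\ref{lem:fixed-pts-properties}\eqref{it:fixed-pts-product} this yields an exact sequence $1 \to (\bZ_\sconn)^A \to (\bG_\sconn)^A \times_S \bZ^A \to \bG^A$. Since $(\bZ_\sconn)^A$ acts freely on $(\bG_\sconn)^A \times_S \bZ^A$ (being a subgroup of the freely acting $\bZ_\sconn$), the fppf quotient by $(\bZ_\sconn)^A$ coincides with the image sheaf, so $\psi$ is a monomorphism; concretely, a local section of $\ker(\psi)$ is represented by an antidiagonal element of $(\bZ_\sconn)^A$ and is thus trivial in the quotient.

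For the epimorphism, write $\bH \subset \bG^A$ for the image subgroup sheaf of $\psi$. Because $\bG_\sconn \to \bG$ restricts to isomorphisms on the unipotent subgroups $\bU$, $\bU^-$ and on all root subgroups (see~\S\ref{ss:reduction-scqs}), the inclusions $\bU^A,(\bU^-)^A \hookrightarrow \bG^A$ factor through $(\bG_\sconn)^A$, hence through $\bH$. By the decomposition~\eqref{eqn:decomp-CA} it then suffices to show $\bT^A \subset \bH$: granting this, $\bH \supseteq \bC^A = (\bU^-)^A \cdot \bT^A \cdot \bU^A$, and since the multiplication morphism $\bC^A \times_S \bC^A \to \bG^A$ is an epimorphism (Remark~\ref{rmk:f-epi}) and $\bH$ is a subgroup, we get $\bH = \bG^A$. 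In turn, $\bT^A \subset \bH$ amounts to the surjectivity, as fppf sheaves, of the map $\pi^A\colon (\bT_\sconn)^A \times_S \bZ^A \to \bT^A$ obtained by applying $(-)^A$ to the short exact sequence $1 \to \bZ_\sconn \to \bT_\sconn \times_S \bZ \to \bT \to 1$.

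The surjectivity of $\pi^A$ is the one genuinely nontrivial point, and I expect it to be the main obstacle. All the groups here are diagonalizable, so by Lemma~\ref{lem:torus_fix_pts_diag} the sequence $1 \to (\bZ_\sconn)^A \to (\bT_\sconn)^A \times_S \bZ^A \xrightarrow{\pi^A} \bT^A$ is obtained by applying the exact contravariant functor $\Diag_S(-)$ to the sequence of abelian groups $(M_\sconn/\Z\Roots)_A \leftarrow (M_\sconn)_A \oplus (M/\Z\Roots)_A \leftarrow M_A$ deduced from the exact sequence of character lattices $0 \to M \to M_\sconn \oplus (M/\Z\Roots) \to M_\sconn/\Z\Roots \to 0$ by taking $A$-coinvariants. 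So I would need this sequence of lattices to remain short exact after taking coinvariants. The key observation is that $\Z\Roots = \bigoplus_{\alpha \in \Delta}\Z\alpha$ and $M_\sconn = \bigoplus_{\alpha \in \Delta}\Z\varpi_\alpha$ are \emph{permutation} $\Z[A]$-modules, since $A$ permutes $\Delta$; hence $(\Z\Roots)_A$ and $(M_\sconn)_A$ are free. Chasing the homology long exact sequences of $0 \to \Z\Roots \to M \to M/\Z\Roots \to 0$ and of $0 \to \Z\Roots \to M_\sconn \to M_\sconn/\Z\Roots \to 0$ then shows that $(\Z\Roots)_A \to M_A$ and $(\Z\Roots)_A \to (M_\sconn)_A$ are injective (any possible kernel is a torsion subgroup of a torsion-free group, so zero), and a short diagram chase deduces injectivity of $M_A \to (M_\sconn)_A \oplus (M/\Z\Roots)_A$; the middle and right exactness are automatic. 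Applying $\Diag_S(-)$ to the resulting short exact sequence $0 \to M_A \to (M_\sconn)_A \oplus (M/\Z\Roots)_A \to (M_\sconn/\Z\Roots)_A \to 0$ yields the short exactness of $1 \to (\bZ_\sconn)^A \to (\bT_\sconn)^A \times_S \bZ^A \xrightarrow{\pi^A} \bT^A \to 1$, so $\pi^A$ is an epimorphism, which finishes the argument. (An alternative, less clean, route would be to reduce to $S = \Spec(\bk)$ with $\bk$ algebraically closed via a flatness/fiberwise criterion and then use the structure theory of \S\ref{ss:fixed-pts-alg-closed-fields}, but the direct sheaf-theoretic argument above avoids this.)
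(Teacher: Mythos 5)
Your proof is correct, and its skeleton coincides with the paper's: the monomorphism property comes from left exactness of $(-)^A$, and the epimorphism property is reduced, via Remark~\ref{rmk:f-epi} together with the big-cell decomposition~\eqref{eqn:decomp-CA} and the fact that $\bG_\sconn \to \bG$ is an isomorphism on root subgroups, to fppf-surjectivity of $(\bT_\sconn)^A \times_S \bZ^A \to \bT^A$. Where you diverge is only in this last torus step: the paper embeds $\bT^A/\bZ^A$ into $(\bT_{\mathrm{adj}})^A$ (with $\bT_{\mathrm{adj}} = \Diag_S(\Z\Roots)$), reduces to surjectivity of $(\bT_\sconn)^A \to (\bT_{\mathrm{adj}})^A$, i.e.\ to injectivity of $(\Z\Roots)_A \to (M_\sconn)_A$, and proves the latter by noting both coinvariant modules are free (their bases are permuted by $A$) and the map is a rational isomorphism; you instead take coinvariants of the whole character-lattice sequence attached to $1 \to \bZ_\sconn \to \bT_\sconn \times_S \bZ \to \bT \to 1$, and your diagram chase hinges on exactly the same key injectivity $(\Z\Roots)_A \hookrightarrow (M_\sconn)_A$, which you prove by the equally valid observation that its kernel is the image of $H_1(A, M_\sconn/\Z\Roots)$, a torsion group since $M_\sconn/\Z\Roots$ is finite, inside the torsion-free module $(\Z\Roots)_A$. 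One small imprecision: the same parenthetical justification does not literally apply to the injectivity of $(\Z\Roots)_A \to M_A$, because $M/\Z\Roots$ may have positive rank, in which case $H_1(A, M/\Z\Roots)$ need not be torsion for an infinite group $A$; but this claim is not actually needed (your chase uses only the $M_\sconn$-injectivity, and it also follows from it since $\Z\Roots \to M_\sconn$ factors through $M$), so the argument stands as written.
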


\begin{proof}
The given map is clearly a monomorphism of group objects in fppf sheaves, as taking $A$-fixed points is a left exact functor. To conclude the proof, it therefore suffices to check surjectivity of the morphism $\bG_{\sconn}^A\times_S \bZ^A\to \bG^A$ in the fppf topology. Since the multiplication map $\bC^A\times_S \bC^A\to \bG^A$ is an epimorphism of fppf sheaves (see Remark~\ref{rmk:f-epi}), we are reduced to showing surjectivity of $\bC_\sconn^A\times_S \bZ^A\to \bC^A $ where $\bC_\sconn$ is the big cell of $\bG_\sconn$. But passing to simply connected covers does not affect root groups so, due to the decomposition \eqref{eqn:decomp-CA}, it is enough to show the surjectivity of the morphism
\[
\bT_\sconn^A\times_S \bZ^A \to \bT^A.
\]
In order to see this, we remark that $\bT/\bZ=\bT_{\mathrm{adj}} := \Diag_S(\Z\Roots)$. By left exactness of fixed points, we can embed $\bT^A/\bZ^A$ into $(\bT_{\mathrm{adj}})^A$, reducing the problem to proving that the morphism 
	$(\bT_{\sconn})^A \to (\bT_{\mathrm{adj}})^A$ is surjective. In fact, in view of~\cite[Exp.~VIII, \S 3]{sga3.2} or~\cite[\S 5.3]{oesterle}, to prove this claim it suffices to prove that the natural morphism $(\Z \Roots)_A \to (M_\sconn)_A$ is injective, which follows from the fact that these $\Z$-modules are free of finite rank (because $\Z \Roots$ and $M_\sconn$ both have a basis permuted by $A$), and that the given morphism becomes an isomorphism after tensor product with $\Q$. 
\end{proof}

 \subsection{Further study of the case of \texorpdfstring{$\mathrm{SL}_3$}{SL3}}
 \label{ss:action-SL3}
 
In this subsection we prove a technical statement regarding the group scheme $(\mathrm{SL}_{3,\Z_2})^{\Z/2\Z}$ 
for the action considered in~\S\ref{sss:sl3_action} (see Proposition~\ref{prop:flat-map-non-reduced-sch-dominant}) that will be required in the companion paper~\cite{alrr}.

If we set
\[
n := \begin{pmatrix}
0 & 0 & 1 \\
0 & -1 & 0 \\
1 & 0 & 0
\end{pmatrix},
\]
then $n$ is a $\Z$-point of $(\mathrm{SL}_{3,\Z})^{\Z/2\Z}$. Let us denote by $\mathrm{C}_{3,\Z}$ the big cell constructed from the pinning of Example~\ref{ex:roots-A2n}. We also let $\mathrm{U}_{3,\Z}$ be as in~\S\ref{sss:sl3_action}, $\mathrm{U}^-_{3,\Z}$ be the similar group of lower triangular matrices, and $\mathrm{T}_{3,\Z}$ be the maximal torus of Example~\ref{ex:roots-A2n}. With this notation, $(\mathrm{SL}_{3,\Z})^{\Z/2\Z}$
has an affine open covering with two open subsets given explicitly by
\[
(\mathrm{C}_{3,\Z})^{\Z/2\Z} \cong (\mathrm{U}_{3,\Z}^-)^{\Z/2\Z} \times_{\Spec(\Z)} (\mathrm{T}_{3,\Z})^{\Z/2\Z} \times_{\Spec(\Z)} (\mathrm{U}_{3,\Z})^{\Z/2\Z}
\]
and
\[
n \cdot (\mathrm{C}_{3,\Z})^{\Z/2\Z} \cong (\mathrm{U}_{3,\Z}^-)^{\Z/2\Z} \times_{\Spec(\Z)} (\mathrm{T}_{3,\Z})^{\Z/2\Z} \times_{\Spec(\Z)} (\mathrm{U}_{3,\Z})^{\Z/2\Z}.
\]
(To see that these two subschemes cover $(\mathrm{SL}_{3,\Z})^{\Z/2\Z}$, it suffices to check that they contain all points over $\Spec(\Z[\frac{1}{2}])$ and over $\Spec(\F_2)$, and this follows from the analysis in~\S\ref{ss:fixed-pts-alg-closed-fields}.) As explained in~\S\ref{sss:sl3_action}, we have
\[
(\mathrm{U}_{3,\Z}^-)^{\Z/2\Z} \cong (\mathrm{U}_{3,\Z})^{\Z/2\Z} \cong \Spec(\Z[x,y] / (x^2-2y)),
\]
and the considerations in~\S\ref{ss:ex-diag-gps} show that we have an isomorphism
\begin{equation}
\label{eqn:fixed-pts-torus-SL3}
\mathbb{G}_{\mathrm{m},\Z} \simto (\mathrm{T}_{3,\Z})^{\Z/2\Z}
\end{equation}
given explicitly by
\[
a \mapsto \mathrm{diag}(a,1,a^{-1}).
\]

For any commutative ring $A$, we denote by $\mathrm{SL}_{3,A}$, $\mathrm{C}_{3,A}$, etc., the schemes obtained by base change to $\Spec(A)$. We will be particularly interested in the cases where $A$ is $\Z_2$ or $\F_2$. Our goal is to prove the following statement.

\begin{prop}
\label{prop:flat-map-non-reduced-sch-dominant}
Let $\bH$ be a flat affine group scheme over $\Z_2$, and let
\[
\pi : \bH \to (\mathrm{SL}_{3,\Z_2})^{\Z/2\Z}
\]
be a morphism of group schemes such that
$\pi_{|\Spec(\F_2)}$ is surjective (at the level of topological spaces).
Then the schematic image of $\pi_{|\Spec(\F_2)}$ is $(\mathrm{SL}_{3,\F_2})^{\Z/2\Z}$.
\end{prop}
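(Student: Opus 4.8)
The strategy is to identify the schematic image $\bZ$ of $\pi_{|\Spec(\F_2)}$ with $\bG_0 := (\mathrm{SL}_{3,\F_2})^{\Z/2\Z}$. Since $\pi_{|\Spec(\F_2)}$ is a homomorphism of affine group schemes over $\F_2$, the subscheme $\bZ \subset \bG_0$ is a closed subgroup scheme (the kernel of $\cO(\bG_0) \to \cO(\bH_{\F_2})$ is a Hopf ideal); and since $\pi_{|\Spec(\F_2)}$ is surjective on underlying spaces, $\bZ$ and $\bG_0$ have the same underlying space, so $(\bG_0)_{\mathrm{red}} \subset \bZ$. In particular $\bZ$ contains the torus $(\mathrm{T}_{3,\F_2})^{\Z/2\Z}$ and the $\F_2$-point $n$ defined in \S\ref{ss:action-SL3}. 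Recall from \S\ref{ss:action-SL3} that $\bG_0$ is covered by the two affine open big cells $(\mathrm{C}_{3,\F_2})^{\Z/2\Z}$ and $n\cdot(\mathrm{C}_{3,\F_2})^{\Z/2\Z}$, and that multiplication gives an isomorphism $(\mathrm{U}_{3,\F_2}^-)^{\Z/2\Z} \times_{\F_2} (\mathrm{T}_{3,\F_2})^{\Z/2\Z} \times_{\F_2} (\mathrm{U}_{3,\F_2})^{\Z/2\Z} \simto (\mathrm{C}_{3,\F_2})^{\Z/2\Z}$. Applying the dynamic/attractor formalism (see \S\ref{ss:parabolic-Levi} and \cite{CGP10,conrad}) to the regular cocharacter $a \mapsto \mathrm{diag}(a,1,a^{-1})$ of $(\mathrm{T}_{3,\F_2})^{\Z/2\Z}$ — for which $(\mathrm{U}_{3,\F_2})^{\Z/2\Z}$, resp.\ $(\mathrm{U}_{3,\F_2}^-)^{\Z/2\Z}$, is the attractor, resp.\ repeller, of $\bG_0$ — and using that $\bZ$ contains this torus, one sees that this product decomposition is compatible with $\bZ$:
\[
\bZ \cap (\mathrm{C}_{3,\F_2})^{\Z/2\Z} = \bigl(\bZ \cap (\mathrm{U}_{3,\F_2}^-)^{\Z/2\Z}\bigr) \times_{\F_2} (\mathrm{T}_{3,\F_2})^{\Z/2\Z} \times_{\F_2} \bigl(\bZ \cap (\mathrm{U}_{3,\F_2})^{\Z/2\Z}\bigr).
\]
Conjugation by $n$ stabilizes $\bZ$ and exchanges $(\mathrm{U}_{3,\F_2})^{\Z/2\Z}$ with $(\mathrm{U}_{3,\F_2}^-)^{\Z/2\Z}$; hence if $\bZ \cap (\mathrm{U}_{3,\F_2})^{\Z/2\Z} = (\mathrm{U}_{3,\F_2})^{\Z/2\Z}$, then $\bZ \supset (\mathrm{C}_{3,\F_2})^{\Z/2\Z}$, and since $n \in \bZ$ also $\bZ \supset n\cdot(\mathrm{C}_{3,\F_2})^{\Z/2\Z}$, whence $\bZ = \bG_0$. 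So it suffices to prove $\bZ \cap (\mathrm{U}_{3,\F_2})^{\Z/2\Z} = (\mathrm{U}_{3,\F_2})^{\Z/2\Z}$.

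Next I would note that $(\mathrm{U}_{3,\F_2})^{\Z/2\Z} \cong \Spec(\F_2[x,y]/(x^2))$ has exactly two closed subgroup schemes containing its reduced part $\Ga = \{x=0\}$, namely $\Ga$ and $(\mathrm{U}_{3,\F_2})^{\Z/2\Z}$ itself: indeed $\Ga$ is the kernel of the quotient map onto $\Spec(\F_2[x]/(x^2))$ with its additive group law (that is, $\alpha_2$), which has order $2$ and hence no nontrivial proper subgroup scheme. As $\bZ \cap (\mathrm{U}_{3,\F_2})^{\Z/2\Z}$ is a closed subgroup scheme containing $(\mathrm{U}_{3,\F_2})^{\Z/2\Z}_{\mathrm{red}} = \Ga$ (see~\eqref{eqn:u3-fixed-field}), it therefore suffices to show that the function $x$ does not vanish on it; in view of the product decomposition above, and viewing $x$ through the third factor, this amounts to showing that $x \in \cO((\mathrm{C}_{3,\F_2})^{\Z/2\Z})$ is not in the kernel of $\cO((\mathrm{C}_{3,\F_2})^{\Z/2\Z}) \to \cO(\bZ \cap (\mathrm{C}_{3,\F_2})^{\Z/2\Z})$.

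For the decisive step I would argue over $\Z_2$. Set $V := (\mathrm{C}_{3,\Z_2})^{\Z/2\Z}$, an affine open of the affine scheme $(\mathrm{SL}_{3,\Z_2})^{\Z/2\Z}$; then $\pi^{-1}(V)$ is affine (being affine over the affine scheme $\bH$), and $B := \cO(V)$ and $D := \cO(\pi^{-1}(V))$ are flat, hence torsion-free, over $\Z_2$. The kernel in question is $\ker(B/2B \to D/2D)$, and since $\bZ$ and $\bG_0$ have the same reduced subscheme this kernel lies in the nilradical of $B/2B$. By \S\ref{ss:action-SL3} the third tensor factor of $V$ is $(\mathrm{U}_{3,\Z_2})^{\Z/2\Z} = \Spec(\Z_2[x,y]/(x^2-2y))$, so the relation $x^2 = 2y$ holds in $B$, hence in $D$. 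Now suppose for contradiction that $x$ maps to $0$ in $D/2D$, i.e.\ that its image in $D$ is $2d$ for some $d \in D$. Then $2y = x^2 = 4d^2$ in $D$, so $2(y - 2d^2) = 0$, and torsion-freeness of $D$ gives $y = 2d^2 \in 2D$; thus $y$ also maps to $0$ in $D/2D$, so the class of $y$ in $B/2B$ lies in the nilradical. But $B/2B \cong \F_2[x^-,y^-,t^{\pm1},x,y]/\bigl((x^-)^2,x^2\bigr)$, whose nilradical is $(x,x^-)$, and the class of $y$ is not in $(x,x^-)$ — it is a polynomial generator of the reduced quotient (where $y$ is a coordinate on the root subgroup $\Ga$ by~\eqref{eqn:u3-fixed-field}). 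This contradiction shows $x$ does not map to $0$ in $D/2D$, completing the proof.

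I expect the first paragraph to be the main obstacle: establishing that the big-cell product decomposition of $(\mathrm{SL}_{3,\F_2})^{\Z/2\Z}$ is compatible with the closed subgroup scheme $\bZ$, which is neither reduced nor smooth — this is exactly where one needs the dynamic description of parabolic- and unipotent-type subgroups via attractors, now applied in a non-reductive setting. The other ingredients — the classification of subgroup schemes of $\alpha_2$, and the computation exploiting the relation $x^2=2y$ — are elementary.
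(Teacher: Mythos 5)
Your proposal is correct, and it follows the paper's proof in its first half but replaces the decisive final step by a more elementary argument. The reduction in your first two paragraphs is in substance the paper's Lemma~\ref{lem:subgroups-SL3}: the intersection of the schematic image with the big cell is decomposed via the attractor formalism of~\cite{CGP10} for the cocharacter $a \mapsto \mathrm{diag}(a,1,a^{-1})$ (note the attractor is $(\mathrm{T}_{3,\F_2})^{\Z/2\Z} \times (\mathrm{U}_{3,\F_2})^{\Z/2\Z}$, not $(\mathrm{U}_{3,\F_2})^{\Z/2\Z}$ alone -- a harmless slip), the possible intersections with $(\mathrm{U}_{3,\F_2})^{\Z/2\Z}$ are pinned down through the extension by $\alpha_{2,\F_2}$, and translation by $n$ together with the two-cell cover completes the reduction. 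Where you genuinely differ is afterwards: the paper rules out the bad case (schematic image equal to the reduced subgroup) via dilatations, i.e.\ the factorization through $\mathrm{Bl}^{(\mathrm{SL}_{3,\F_2})^{\Z/2\Z}}_{(\mathrm{SL}_{3,\F_2})^{\Z/2\Z}_{\mathrm{red}}}\bigl((\mathrm{SL}_{3,\Z_2})^{\Z/2\Z}\bigr)$ and the comparison of Lemma~\ref{lem:blowup-SL3}, whereas you argue directly on coordinate rings: if $x$ lands in $2D$ with $D = \cO\bigl(\pi^{-1}((\mathrm{C}_{3,\Z_2})^{\Z/2\Z})\bigr)$, then the relation $x^2 = 2y$ and $2$-torsion-freeness of $D$ (from flatness of $\bH$) force $y \in 2D$, contradicting the fact that the ideal of the image in $\cO\bigl((\mathrm{C}_{3,\F_2})^{\Z/2\Z}\bigr)$ lies in the nilradical, which does not contain $y$. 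This is the same divisibility mechanism that drives the paper's dilatation computation (there $x \mapsto 2u$ forces $y \mapsto 2u^2$), but stripped of the blow-up formalism; your version is shorter and self-contained, while the paper's packages the computation as a reusable statement about dilatations. Two small points worth making explicit in a write-up: the identification of the ideal of the image over the big cell with $\ker(B/2B \to D/2D)$ uses that schematic images of quasi-compact (here affine) morphisms are compatible with restriction to opens, and the assertion that $\alpha_{2,\F_2}$ has no nontrivial proper subgroup scheme deserves a one-line justification (its Hopf algebra $\F_2[x]/(x^2)$ admits no proper nontrivial Hopf quotient).
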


We start with two preliminary lemmas.

\begin{lem}
\label{lem:subgroups-SL3}
The only closed subgroup schemes of $(\mathrm{SL}_{3,\F_2})^{\Z/2\Z}$ containing the subgroup $(\mathrm{SL}_{3,\F_2})_{\mathrm{red}}^{\Z/2\Z}$ are $(\mathrm{SL}_{3,\F_2})_{\mathrm{red}}^{\Z/2\Z}$ and $(\mathrm{SL}_{3,\F_2})^{\Z/2\Z}$.
\end{lem}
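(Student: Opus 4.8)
The idea is to compute the coordinate ring of $(\mathrm{SL}_{3,\F_2})^{\Z/2\Z}$ explicitly enough to see that it is obtained from the coordinate ring of $(\mathrm{SL}_{3,\F_2})^{\Z/2\Z}_{\mathrm{red}}$ by adjoining a single square root of a function that already lies in the reduced ring. Concretely, by the open covering described just before the lemma, $(\mathrm{SL}_{3,\F_2})^{\Z/2\Z}$ is covered by two copies of $(\mathrm{U}_{3,\F_2}^-)^{\Z/2\Z} \times (\mathrm{T}_{3,\F_2})^{\Z/2\Z} \times (\mathrm{U}_{3,\F_2})^{\Z/2\Z}$, and each factor $(\mathrm{U}_{3,\F_2})^{\Z/2\Z} \cong \Spec(\F_2[x,y]/(x^2))$ is, after killing the nilpotent $x$, just $\mathbb{A}^1_{\F_2}$. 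So on each chart the nonreduced scheme is the reduced chart times $\Spec(\F_2[\epsilon]/\epsilon^2)$ in two variables, and the whole nilradical is generated (as an ideal) by the classes of $x$ coming from $\bU$ and $\bU^-$ — these two actually differ by a unit multiple once one passes to the intersection, so there is effectively one "nilpotent direction." I would make this precise by showing that $(\mathrm{SL}_{3,\F_2})^{\Z/2\Z}_{\mathrm{red}} \cong \mathrm{SL}_{2,\F_2}$ (via~\eqref{eqn:SL3-fixed-pts-2}, which is an isomorphism by Example~\ref{ex:sl_odd}), and that the sheaf of ideals of square-zero nilpotents is an invertible module over the structure sheaf of $\mathrm{SL}_{2,\F_2}$ — in fact the Lie-algebra line bundle corresponding to the root of $\mathrm{SL}_{2,\F_2}$ — with square-zero multiplication. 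Thus $(\mathrm{SL}_{3,\F_2})^{\Z/2\Z}$ is the trivial square-zero extension $\mathrm{SL}_{2,\F_2} \oplus \mathfrak{l}$ for this line bundle $\mathfrak{l}$.

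Granting this structural description, the lemma becomes a statement about square-zero extensions of group schemes. Suppose $\bK \subset (\mathrm{SL}_{3,\F_2})^{\Z/2\Z}$ is a closed subgroup scheme containing $\mathrm{SL}_{2,\F_2} = (\mathrm{SL}_{3,\F_2})^{\Z/2\Z}_{\mathrm{red}}$. Its ideal sheaf $\mathcal{I}_{\bK}$ sits inside the nilradical $\mathcal{J} = \mathfrak{l}$, and is an $\mathrm{SL}_{2,\F_2}$-stable subsheaf (stable under conjugation, as $\bK$ is normalized... — actually one only needs stability under the adjoint action of $\mathrm{SL}_{2,\F_2}$, which is contained in $\bK$, hence normalizes $\bK$). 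But $\mathcal{J} \cong \mathfrak{l}$ is the line bundle attached to a nonzero weight (the root $\bar\alpha_E$, in the notation of Remark~\ref{rmk:field-root-calc}, which is nonzero by Step~4 of the proof of Proposition~\ref{prop:fix-pts-fields}), so the only $\mathrm{SL}_{2,\F_2}$-equivariant coherent subsheaves of $\mathfrak{l}$ are $0$ and $\mathfrak{l}$ itself — there is no intermediate ideal because $\mathfrak{l}$ has rank one and the torus $(\mathrm{T}_{3,\F_2})^{\Z/2\Z}$ acts on its fiber at the identity through a nonzero character, so any nonzero equivariant subsheaf must be all of $\mathfrak{l}$ on the big cell and hence (by translation) everywhere. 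Therefore $\mathcal{I}_{\bK}$ is either $0$, giving $\bK = (\mathrm{SL}_{3,\F_2})^{\Z/2\Z}$, or all of $\mathcal{J}$, giving $\bK = \mathrm{SL}_{2,\F_2}$.

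Alternatively — and this might be cleaner to write — one can argue \v{C}ech-locally: on the chart $(\mathrm{C}_{3,\F_2})^{\Z/2\Z}$, the subgroup $\bK$ contains $(\mathrm{U}_{3,\F_2})^{\Z/2\Z}_{\mathrm{red}}$ and $(\mathrm{U}_{3,\F_2}^-)^{\Z/2\Z}_{\mathrm{red}}$ and all of $(\mathrm{T}_{3,\F_2})^{\Z/2\Z}$, and one checks directly that the closed subgroup schemes of $(\mathrm{U}_{3,\F_2})^{\Z/2\Z} \cong \Spec(\F_2[x,y]/(x^2))$ that are stable under the $(\mathrm{T}_{3,\F_2})^{\Z/2\Z}$-conjugation action and contain $\{x=0\}$ are exactly the two obvious ones; since $\bK$ is generated (in the fppf sense, using Remark~\ref{rmk:f-epi} applied to $\mathrm{SL}_3$ in place of $\bG$) by its intersections with $\bU^{\pm}$ and $\bT$, this pins down $\bK$.

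**Main obstacle.** The delicate point is verifying that the nilradical of $(\mathrm{SL}_{3,\F_2})^{\Z/2\Z}$, as a module over its reduced quotient $\mathrm{SL}_{2,\F_2}$, is genuinely an invertible sheaf with square-zero multiplication and with no intermediate $\mathrm{SL}_2$-stable subsheaves — i.e.\ ruling out that $\bK$ could be a "partial" thickening that is nontrivial on some locus and trivial on another. This requires gluing the two charts: on each chart the nilradical is free of rank one over the reduced chart, and one must check the transition function (coming from the \v{C}ech data and from left/right translation by $n$) identifies these into a single line bundle, so that an equivariant subsheaf cannot be all of $\mathfrak{l}$ on one chart and zero on the other. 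Equivariance under the maximal torus $(\mathrm{T}_{3,\F_2})^{\Z/2\Z} = \mathbb{G}_{\mathrm{m},\F_2}$, together with the explicit description of its action (weight $\bar\alpha_E \ne 0$ on the nilpotent direction), is what forces the subsheaf to be homogeneous and hence to be $0$ or everything; I expect most of the work to be in bookkeeping this torus action across the covering. Everything else is a routine translation of "closed subgroup scheme containing the reduced part" into "torus-stable ideal inside a rank-one nilradical."
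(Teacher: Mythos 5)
Your main route rests on a structural claim that is false: $(\mathrm{SL}_{3,\F_2})^{\Z/2\Z}$ is \emph{not} a trivial square-zero extension of $\mathrm{SL}_{2,\F_2}$ by a line bundle. On the big cell the fixed-point scheme is $\Spec\bigl(\F_2[x,y,z^{\pm 1},x',y']/(x^2,(x')^2)\bigr)$ (this is written out in the proof of Lemma~\ref{lem:blowup-SL3}), so the nilradical is the ideal $(x,x')$: it needs \emph{two} generators modulo its square, one coming from $(\mathrm{U}_{3,\F_2})^{\Z/2\Z}$ and one from $(\mathrm{U}^-_{3,\F_2})^{\Z/2\Z}$ (these are independent coordinates on the same chart, not unit multiples of each other), and $xx'\neq 0$, so the nilradical is neither invertible over the reduced ring nor square-zero. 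Equivalently, the Lie algebra of $(\mathrm{SL}_{3,\F_2})^{\Z/2\Z}$ has dimension $5=\dim \mathrm{SL}_2+2$. Worse for your argument, the two nilpotent directions carry the \emph{opposite} torus weights $\pm\bar\alpha_E$, so torus-equivariance does not exclude the ``partial thickenings'' you correctly identified as the delicate point: the ideal $(x)$ (reduced on the $\bU$-side, fully thickened on the $\bU^-$-side) is perfectly $(\mathrm{T}_{3,\F_2})^{\Z/2\Z}$-stable. What rules such subschemes out is conjugation by the Weyl representative $n$ (equivalently, stability under all of $\mathrm{SL}_{2,\F_2}$, whose Weyl element swaps the two directions), not the torus action; this is exactly how the paper argues.

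Your second, \v{C}ech-local variant is much closer to the paper's actual proof, but it has gaps at the two places where the paper does real work. First, you need to know that $\mathbf{K}\cap(\mathrm{C}_{3,\F_2})^{\Z/2\Z}$ decomposes as the product of its intersections with $(\mathrm{U}^-_{3,\F_2})^{\Z/2\Z}$, $(\mathrm{T}_{3,\F_2})^{\Z/2\Z}$ and $(\mathrm{U}_{3,\F_2})^{\Z/2\Z}$; the paper obtains this from the attractor/repeller formalism of~\cite[Proposition~2.1.8(3)]{CGP10} applied to the copy of $\mathbb{G}_{\mathrm{m}}$ inside $\mathbf{K}$ acting by conjugation, and without some such argument the knowledge of $\mathbf{K}\cap\bU^{\pm}$ and $\mathbf{K}\cap\bT$ does not bound $\mathbf{K}\cap\bC$ from above. (Your identification of the possible intermediate subgroups of $(\mathrm{U}_{3,\F_2})^{\Z/2\Z}$ is fine and matches the paper, which uses the extension $1\to(\mathrm{U}_{3,\F_2})^{\Z/2\Z}_{\mathrm{red}}\to(\mathrm{U}_{3,\F_2})^{\Z/2\Z}\to\alpha_{2,\F_2}\to 1$.) Second, Remark~\ref{rmk:f-epi} says that the big cell generates $\bG^A$ as an fppf sheaf; it does \emph{not} say that an arbitrary closed subgroup $\mathbf{K}$ is generated by, or determined by, $\mathbf{K}\cap\bU^{\pm}$ and $\mathbf{K}\cap\bT$. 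That kind of generation argument does settle the case where the unipotent intersections are full, but it cannot show $\mathbf{K}=(\mathrm{SL}_{3,\F_2})^{\Z/2\Z}_{\mathrm{red}}$ in the reduced case, since there you must bound $\mathbf{K}$ from above. The paper closes this by covering $(\mathrm{SL}_{3,\F_2})^{\Z/2\Z}$ by the two opens $(\mathrm{C}_{3,\F_2})^{\Z/2\Z}$ and $n\cdot(\mathrm{C}_{3,\F_2})^{\Z/2\Z}$ and using $n\in\mathbf{K}(\F_2)$ to transport the conclusion on the big cell to the second chart; some step of this kind is missing from your sketch.
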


\begin{proof}
Let $\mathbf{K}$ be a closed subgroup scheme of $(\mathrm{SL}_{3,\F_2})^{\Z/2\Z}$ containing $(\mathrm{SL}_{3,\F_2})_{\mathrm{red}}^{\Z/2\Z}$. The closed immersion $(\mathrm{T}_{3,\F_2})^{\Z/2\Z} \hookrightarrow (\mathrm{SL}_{3,\F_2})^{\Z/2\Z}$ factors through $(\mathrm{SL}_{3,\F_2})_{\mathrm{red}}^{\Z/2\Z}$, and hence through $\mathbf{K}$, and $(\mathrm{T}_{3,\F_2})^{\Z/2\Z}$ identifies with $\mathbb{G}_{\mathrm{m}, \F_2}$, see~\eqref{eqn:fixed-pts-torus-SL3}. We will consider the attractor, resp.~repeller, scheme associated with the conjugation action of this subgroup on $\mathrm{SL}_{3,\F_2}$ and $(\mathrm{SL}_{3,\F_2})_{\mathrm{red}}^{\Z/2\Z}$, in the sense of~\cite[\S 2.1]{CGP10}. The attractor, resp.~repeller, for the action on $\mathrm{SL}_{3,\F_2}$ is the standard positive, resp.~negative, Borel subgroup, see e.g.~\cite[Theorem~5.1.13 and its proof]{conrad}. Hence the corresponding attractor, resp.~repeller, for the action on $(\mathrm{SL}_{3,\F_2})^{\Z/2\Z}$ identifies with
\[
(\mathrm{T}_{3,\F_2})^{\Z/2\Z} \times_{\Spec(\F_2)} (\mathrm{U}_{3,\F_2})^{\Z/2\Z}, \quad \text{resp.} \quad (\mathrm{U}^-_{3,\F_2})^{\Z/2\Z} \times_{\Spec(\F_2)} (\mathrm{T}_{3,\F_2})^{\Z/2\Z}.
\]
In view of~\cite[Proposition~2.1.8(3)]{CGP10}, we deduce that
\begin{multline*}
\mathbf{K} \cap (\mathrm{C}_{3,\F_2})^{\Z/2\Z} = \\
\bigl( \mathbf{K} \cap (\mathrm{U}^-_{3,\F_2})^{\Z/2\Z} \bigr) \times_{\Spec(\F_2)} (\mathrm{T}_{3,\F_2})^{\Z/2\Z} \times_{\Spec(\F_2)}  \bigl( \mathbf{K} \cap (\mathrm{U}_{3,\F_2})^{\Z/2\Z} \bigr).
\end{multline*}
Now, we observe that the matrix description for $(\mathrm{U}_{3,\F_2})^{\Z/2\Z}$ provides a short exact sequence of group schemes
\[
1 \to (\mathrm{U}_{3,\F_2})^{\Z/2\Z}_{\mathrm{red}} \to (\mathrm{U}_{3,\F_2})^{\Z/2\Z} \to \alpha_{2,\F_2} \to 1.
\]
Since $\mathbf{K}$ contains $(\mathrm{U}_{3,\F_2})^{\Z/2\Z}_{\mathrm{red}}$, it follows that $\mathbf{K} \cap (\mathrm{U}_{3,\F_2})^{\Z/2\Z}$ is either $(\mathrm{U}_{3,\F_2})^{\Z/2\Z}$ or $(\mathrm{U}_{3,\F_2})^{\Z/2\Z}_{\mathrm{red}}$. 
Since $\mathbf{K}(\F_2)$ contains the image of the element $n$ considered above, we have
\[
\mathbf{K} \cap (\mathrm{U}^-_{3,\F_2})^{\Z/2\Z} = 
\begin{cases}
(\mathrm{U}^-_{3,\F_2})^{\Z/2\Z} & \text{if $\mathbf{K} \cap (\mathrm{U}_{3,\F_2})^{\Z/2\Z} = (\mathrm{U}_{3,\F_2})^{\Z/2\Z}$;} \\
(\mathrm{U}^-_{3,\F_2})^{\Z/2\Z}_{\mathrm{red}} & \text{if $\mathbf{K} \cap (\mathrm{U}_{3,\F_2})^{\Z/2\Z} = (\mathrm{U}_{3,\F_2})^{\Z/2\Z}_{\mathrm{red}}$.}
\end{cases}
\]
As a conclusion, we have either
\[
\mathbf{K} \cap (\mathrm{C}_{3,\F_2})^{\Z/2\Z} = (\mathrm{C}_{3,\F_2})^{\Z/2\Z} \quad
\text{or} \quad
\mathbf{K} \cap (\mathrm{C}_{3,\F_2})^{\Z/2\Z} = (\mathrm{C}_{3,\F_2})_{\mathrm{red}}^{\Z/2\Z}.
\]
Using once again the fact that $\mathbf{K}(\F_2)$ contains the image of $n$, and the description of the open cover of  $(\mathrm{SL}_{3,\Z})^{\Z/2\Z}$ considered above,
in the first case we deduce that $\mathbf{K}=(\mathrm{SL}_{3,\F_2})^{\Z/2\Z}$, and in the second case that $\mathbf{K}=(\mathrm{SL}_{3,\F_2})^{\Z/2\Z}_{\mathrm{red}}$.
\end{proof}
 
The second lemma uses the notion of dilatation (or affine blow-up) from~\cite{mrr}. We will apply this construction to the scheme $(\mathrm{SL}_{3,\Z_2})^{\Z/2\Z}$, the principal subscheme $(\mathrm{SL}_{3,\F_2})^{\Z/2\Z}$, and either the closed subscheme $(\mathrm{SL}_{3,\F_2})_{\mathrm{red}}^{\Z/2\Z} \subset (\mathrm{SL}_{3,\F_2})^{\Z/2\Z}$ or the closed subscheme $(\mathrm{T}_{3,\F_2})^{\Z/2\Z} \subset (\mathrm{SL}_{3,\F_2})^{\Z/2\Z}$. The construction of~\cite{mrr} provides two affine schemes
\[
\mathrm{Bl}^{(\mathrm{SL}_{3,\F_2})^{\Z/2\Z}}_{(\mathrm{SL}_{3,\F_2})_{\mathrm{red}}^{\Z/2\Z}}\Bigl( (\mathrm{SL}_{3,\Z_2})^{\Z/2\Z} \Bigr) \quad \text{and} \quad
\mathrm{Bl}^{(\mathrm{SL}_{3,\F_2})^{\Z/2\Z}}_{(\mathrm{T}_{3,\F_2})^{\Z/2\Z}} \Bigl( (\mathrm{SL}_{3,\Z_2})^{\Z/2\Z} \Bigr)
\]
endowed with canonical morphisms to $(\mathrm{SL}_{3,\Z_2})^{\Z/2\Z}$. Moreover, by the universal property of dilatations (see~\cite[Proposition~2.6]{mrr}) there exists a canonical morphism
\begin{equation}
\label{eqn:morph-blowups}
\mathrm{Bl}^{(\mathrm{SL}_{3,\F_2})^{\Z/2\Z}}_{(\mathrm{T}_{3,\F_2})^{\Z/2\Z}} \Bigl( (\mathrm{SL}_{3,\Z_2})^{\Z/2\Z} \Bigr) \to \mathrm{Bl}^{(\mathrm{SL}_{3,\F_2})^{\Z/2\Z}}_{(\mathrm{SL}_{3,\F_2})_{\mathrm{red}}^{\Z/2\Z}}\Bigl( (\mathrm{SL}_{3,\Z_2})^{\Z/2\Z} \Bigr)
\end{equation}
over $(\mathrm{SL}_{3,\Z_2})^{\Z/2\Z}$.

\begin{lem}
\label{lem:blowup-SL3}
The morphism~\eqref{eqn:morph-blowups} restricts to an isomorphism over $(\mathrm{C}_{3,\Z_2})^{\Z/2\Z}$.
\end{lem}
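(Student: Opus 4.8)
The plan is to restrict everything to the big cell and compute the two dilatations explicitly. First, recall that dilatations commute with restriction to open subschemes (see~\cite{mrr}). Since moreover $(\mathrm{T}_{3,\F_2})^{\Z/2\Z}$ is contained in the big cell, the restriction of~\eqref{eqn:morph-blowups} to $(\mathrm{C}_{3,\Z_2})^{\Z/2\Z}$ is the canonical morphism between
\[
\mathrm{Bl}^{(\mathrm{C}_{3,\F_2})^{\Z/2\Z}}_{(\mathrm{T}_{3,\F_2})^{\Z/2\Z}}\bigl((\mathrm{C}_{3,\Z_2})^{\Z/2\Z}\bigr)
\quad\text{and}\quad
\mathrm{Bl}^{(\mathrm{C}_{3,\F_2})^{\Z/2\Z}}_{(\mathrm{SL}_{3,\F_2})^{\Z/2\Z}_{\mathrm{red}}\,\cap\,(\mathrm{C}_{3,\F_2})^{\Z/2\Z}}\bigl((\mathrm{C}_{3,\Z_2})^{\Z/2\Z}\bigr).
\]
Using the product description of $(\mathrm{C}_{3,\Z_2})^{\Z/2\Z}$ recalled above, together with the isomorphisms $(\mathrm{U}_{3,\Z})^{\Z/2\Z}\cong(\mathrm{U}^-_{3,\Z})^{\Z/2\Z}\cong\Spec(\Z[x,y]/(x^2-2y))$ and~\eqref{eqn:fixed-pts-torus-SL3}, I would write
\[
(\mathrm{C}_{3,\Z_2})^{\Z/2\Z}=\Spec R,\qquad R=\Z_2[x,y,x',y',t,t^{-1}]/(x^2-2y,\ x'^2-2y').
\]
Since $(\mathrm{SL}_{3,\Z_2})^{\Z/2\Z}$ is $\Z_2$-flat by Theorem~\ref{thm:fixed-pts}\eqref{it:fixed-pts-1}, the element $2$ is a nonzerodivisor on $R$; hence $(\mathrm{SL}_{3,\F_2})^{\Z/2\Z}\cap(\mathrm{C}_{3,\Z_2})^{\Z/2\Z}=V(2)$ is an effective Cartier divisor, and for any ideal $I$ of $R$ with $2\in I$ one has $\mathrm{Bl}^{V(2)}_{V(I)}(\Spec R)=\Spec R[I/2]$, where $R[I/2]\subseteq R[\tfrac12]$ denotes the $R$-subalgebra generated by $\{a/2:a\in I\}$ (see~\cite{mrr}).

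Next I would identify the two centers as closed subschemes of $\Spec R$. Since reduction commutes with the open immersion $(\mathrm{C}_{3,\F_2})^{\Z/2\Z}\hookrightarrow(\mathrm{SL}_{3,\F_2})^{\Z/2\Z}$, the first center is the reduced subscheme of $(\mathrm{C}_{3,\F_2})^{\Z/2\Z}=\Spec(\F_2[x,y,x',y',t^{\pm1}]/(x^2,x'^2))$; as the nilradical of $(x^2,x'^2)$ is $(x,x')$, this center is $V(2,x,x')$. The second center is the image of $(\mathrm{T}_{3,\F_2})^{\Z/2\Z}$ inside the big cell, that is, the torus factor together with the unit sections of $(\mathrm{U}_{3,\F_2})^{\Z/2\Z}$ and $(\mathrm{U}^-_{3,\F_2})^{\Z/2\Z}$; since the unit of $(\mathrm{U}_{3,\Z})^{\Z/2\Z}=\Spec(\Z[x,y]/(x^2-2y))$ is cut out by $(x,y)$ (see~\S\ref{sss:sl3_action}), this center is $V(2,x,y,x',y')$.

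Finally, by the formula above the first dilatation is $\Spec R[\tfrac x2,\tfrac{x'}2]$ and the second is $\Spec R[\tfrac x2,\tfrac y2,\tfrac{x'}2,\tfrac{y'}2]$, both regarded as subalgebras of $R[\tfrac12]$. But the relations $x^2=2y$ and $x'^2=2y'$ give $\tfrac y2=(\tfrac x2)^2$ and $\tfrac{y'}2=(\tfrac{x'}2)^2$ in $R[\tfrac12]$, so these two subalgebras coincide. Consequently the restriction of~\eqref{eqn:morph-blowups} to $(\mathrm{C}_{3,\Z_2})^{\Z/2\Z}$ is a morphism of affine $\Spec R$-schemes whose source and target have the same coordinate ring $R':=R[\tfrac x2,\tfrac{x'}2]\subseteq R[\tfrac12]$; since $2$ is a nonzerodivisor on $R'$, any $R$-algebra endomorphism of $R'$ must fix $\tfrac x2$ and $\tfrac{x'}2$, so this morphism is the identity, and in particular an isomorphism. (One may add that, via $y=2(\tfrac x2)^2$ and $y'=2(\tfrac{x'}2)^2$, the common dilatation $\Spec R'$ is isomorphic to $\mathbb{A}^2_{\Z_2}\times_{\Spec(\Z_2)}\mathbb{G}_{\mathrm{m},\Z_2}$.)

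The only genuinely delicate points are bookkeeping: pinning down the scheme structures of the two centers inside $\Spec R$, and invoking the correct description of a dilatation whose center contains the Cartier divisor; once these are in place, the ring-theoretic comparison forcing the morphism to be an isomorphism is immediate.
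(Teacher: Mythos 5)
Your proof is correct, and it opens the same way as the paper's: restrict along the flat (open) base change to the big cell and work with the explicit coordinate ring $R=\Z_2[x,y,x',y',t^{\pm1}]/(x^2-2y,(x')^2-2y')$; your identifications of the two centers, as $V(2,x,x')$ (reduction commutes with passage to an open) and $V(2,x,y,x',y')$ (torus times the unit sections), agree with the paper's. Where you genuinely diverge is the endgame. The paper computes only the dilatation with reduced center, identifies it as $\Spec(\Z_2[u,y,t^{\pm1},u',y']/(2u^2-y,2(u')^2-y'))$ via the universal property, observes that its special fiber factors through $(\mathrm{T}_{3,\F_2})^{\Z/2\Z}$, uses the universal property again to produce a morphism in the opposite direction, and then checks the two maps are mutually inverse by comparing rings of global sections inside those of the common generic fiber. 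You instead invoke the description of the dilatation as the affine blow-up algebra $R[I/2]\subseteq R[\tfrac12]$ (legitimate here since $2$ is a nonzerodivisor on $R$ by flatness), compute \emph{both} dilatations as subalgebras of $R[\tfrac12]$, note they coincide because $y/2=(x/2)^2$ and $y'/2=(x'/2)^2$, and conclude by the rigidity observation that an $R$-algebra endomorphism of a $2$-torsion-free ring fixing $x=2\cdot(x/2)$ must fix $x/2$; this neatly sidesteps both the construction of an inverse and any appeal to uniqueness in the universal property (any morphism over $\Spec R$ between the two restricted dilatations is forced to be the identity). The trade-off: your argument leans on the subring description of the dilatation, while the paper's stays entirely within the universal-property formalism of~\cite{mrr}; both are complete, and yours is arguably the more streamlined of the two.
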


\begin{proof}
By compatibility of dilatations with flat base change (see~\cite[Lem\-ma~2.7]{mrr}), we have
\[
\mathrm{Bl}^{(\mathrm{SL}_{3,\F_2})^{\Z/2\Z}}_{(\mathrm{SL}_{3,\F_2})_{\mathrm{red}}^{\Z/2\Z}}\Bigl( (\mathrm{SL}_{3,\Z_2})^{\Z/2\Z} \Bigr) \times_{(\mathrm{SL}_{3,\Z_2})^{\Z/2\Z}} (\mathrm{C}_{3,\Z_2})^{\Z/2\Z} \cong \mathrm{Bl}^{(\mathrm{C}_{3,\F_2})^{\Z/2\Z}}_{(\mathrm{C}_{3,\F_2})_{\mathrm{red}}^{\Z/2\Z}}\Bigl( (\mathrm{C}_{3,\Z_2})^{\Z/2\Z} \Bigr),
\]
with
\begin{multline*}
\mathrm{Bl}^{(\mathrm{C}_{3,\F_2})^{\Z/2\Z}}_{(\mathrm{C}_{3,\F_2})_{\mathrm{red}}^{\Z/2\Z}}\Bigl( (\mathrm{C}_{3,\Z_2})^{\Z/2\Z} \Bigr) = \\
\mathrm{Bl}^{\Spec(\F_2[x,y,z^{\pm 1}, x',y'] / (x^2, (x')^2))}_{\Spec(\F_2[y,z^{\pm 1},y'])}\Bigl( \Spec(\Z_2[x,y,z^{\pm 1}, x',y'] / (x^2-2y, (x')^2-2y'))\Bigr).
\end{multline*}
Now one checks that the right-hand side identifies with
\[
\Spec(\Z_2[u,y,z^{\pm 1}, u',y'] / (2u^2-y, 2(u')^2-y')),
\]
with the morphism to $\Spec(\Z_2[x,y,z^{\pm 1}, x',y'] / (x^2-2y, (x')^2-2y'))$ corresponding to the ring homomorphism
\[
\Z_2[x,y,z^{\pm 1}, x',y'] / (x^2-2y, (x')^2-2y') \to \Z_2[u,y,z^{\pm 1}, u',y'] / (2u^2-y, 2(u')^2-y')
\]
defined by $x \mapsto 2u$ and $x' \mapsto 2u'$. (In fact the given scheme satisfies the universal property of~\cite[Proposition~2.6]{mrr}.) This description shows that the restriction of the canonical morphism
\[
\mathrm{Bl}^{(\mathrm{C}_{3,\F_2})^{\Z/2\Z}}_{(\mathrm{C}_{3,\F_2})_{\mathrm{red}}^{\Z/2\Z}}\Bigl( (\mathrm{C}_{3,\Z_2})^{\Z/2\Z} \Bigr) \to (\mathrm{C}_{3,\Z_2})^{\Z/2\Z}
\]
to $\Spec(\F_2)$ factors through $\Spec(\F_2[z^{\pm 1}])$ since $x,x',y,y'$ get sent to $0$; by the universal property of dilatations (and again compatibility of dilatations with flat base change) we deduce a canonical morphism
\begin{multline*}
 \mathrm{Bl}^{(\mathrm{SL}_{3,\F_2})^{\Z/2\Z}}_{(\mathrm{SL}_{3,\F_2})_{\mathrm{red}}^{\Z/2\Z}}\Bigl( (\mathrm{SL}_{3,\Z_2})^{\Z/2\Z} \Bigr) \times_{(\mathrm{SL}_{3,\Z_2})^{\Z/2\Z}} (\mathrm{C}_{3,\Z_2})^{\Z/2\Z} \to
\\
\mathrm{Bl}^{(\mathrm{SL}_{3,\F_2})^{\Z/2\Z}}_{(\mathrm{T}_{3,\F_2})^{\Z/2\Z}} \Bigl( (\mathrm{SL}_{3,\Z_2})^{\Z/2\Z} \Bigr) \times_{(\mathrm{SL}_{3,\Z_2})^{\Z/2\Z}} (\mathrm{C}_{3,\Z_2})^{\Z/2\Z}.
\end{multline*}
We claim that this morphism is an inverse to the restriction of~\eqref{eqn:morph-blowups}. Indeed, both are flat affine schemes over $\Spec(\Z_2)$ with generic fiber equal to $(\mathrm{SL}_{3,\Q_2})^{\Z/2\Z}$, so to prove the claim it is enough to see that their global sections inside those of $(\mathrm{SL}_{3,\Q_2})^{\Z/2\Z}$ coincide. But each containment is implied by the existence of a map of spectra in the opposite direction.
\end{proof}

\begin{proof}[Proof of Proposition~\ref{prop:flat-map-non-reduced-sch-dominant}]
Let $\pi$ be as in the statement, and assume that the sche\-matic image of $\pi_{|\Spec(\F_2)}$ is not $(\mathrm{SL}_{3,\F_2})^{\Z/2\Z}$. By Lemma~\ref{lem:subgroups-SL3}, we conclude that the schematic image of this morphism is $(\mathrm{SL}_{3,\F_2})_{\mathrm{red}}^{\Z/2\Z}$. 
By the universal property of dilatations (see~\cite[Proposition~2.6]{mrr}), we then have a unique factorization
\[
\pi : \bH \to \mathrm{Bl}^{(\mathrm{SL}_{3,\F_2})^{\Z/2\Z}}_{(\mathrm{SL}_{3,\F_2})_{\mathrm{red}}^{\Z/2\Z}}\Bigl( (\mathrm{SL}_{3,\Z_2})^{\Z/2\Z} \Bigr) \to  (\mathrm{SL}_{3,\Z_2})^{\Z/2\Z}.
\] 
Using Lemma~\ref{lem:blowup-SL3}, we deduce that the restriction of $\pi_{|\Spec(\F_2)}$ to $(\mathrm{C}_{3,\F_2})^{\Z/2\Z}$
factors through $(\mathrm{T}_{3,\F_2})^{\Z/2\Z}$, yielding a contradiction.
\end{proof}

	\bibliography{biblio.bib}
	\bibliographystyle{alpha}

\end{document}